\newtheorem{thm}{Theorem}[section]
\newtheorem{lem}[thm]{Lemma}
\theoremstyle{definition}
\newtheorem{rem}[thm]{Remark}
\newtheorem*{claim*}{Claim}
\theoremstyle{remark}
\numberwithin{equation}{section}
\title{\vspace{-3cm}\textbf{Inverse medium scattering problems with Kalman filter techniques I. Linear case}}
\author[1]{\rm Takashi Furuya}
\author[2]{\rm Roland Potthast}
\affil[1]{{\small Department of Mathematics, Hokkaido University, Japan}}
\affil[]{{\small Email: takashi.furuya0101@gmail.com}\vspace{3mm}}
\affil[2]{{\small Data Assimilation Unit, Deutscher Wetterdienst, Germany}}
\affil[]{{\small Email: Roland.Potthast@dwd.de}}
\date{}
\begin{document}
\maketitle
\begin{abstract}
In this paper, we study the inverse acoustic medium scattering problem to reconstruct the unknown inhomogeneous medium from far field patterns of scattered waves.
We propose the reconstruction scheme based on the Kalman filter, which becomes possible to sequentially estimate the inhomogeneous medium. 
We also show that in the linear inverse problem, the estimation for the Kalman filter is equivalent to that for the Tikhonov regularization.
Finally, we give numerical examples to demonstrate our proposed method.
\end{abstract}
\date{{\bf Key words}. Inverse acoustic scattering, Inhomogeneous medium, Far field pattern, Tikhonov regularization, Kalman filter.}
\section{Introduction}
The inverse scattering problem is the problem to determine unknown scatterers by measuring scattered waves that is generated by sending incident waves far away from scatterers. 
It is of importance for many applications, for example medical imaging, nondestructive testing, remote exploration, and geophysical prospecting. 
Due to many applications, the inverse scattering problem has been studied in various ways.
For further readings, we refer to the following books \cite{Cakoni, Chen, ColtonKress, Kirsch, NakamuraPotthast}, which include the summary of classical and recent progress of the inverse scattering problem.
\par
We begin with the mathematical formulation of the scattering problem. 
Let $k>0$ be the wave number, and let $\theta \in \mathbb{S}^{1}$ be incident direction. 
We denote the incident field $u^{inc}(\cdot, \theta)$ with the direction $\theta$ by the plane wave of the form 
\begin{equation}
u^{inc}(x, \theta):=\mathrm{e}^{ikx \cdot \theta}, \ x \in \mathbb{R}^2. \label{1.1}
\end{equation} 
Let $Q$ be a bounded domain and let its exterior $\mathbb{R}^2\setminus  \overline{Q}$ be connected. 
We assume that $q \in L^{\infty}(\mathbb{R}^2)$, which refers to the inhomogeneous medium, satisfies $\mathrm{Im}q \geq 0$, and its support $\mathrm{supp}\ q$ is embed into $Q$, that is $\mathrm{supp}\ q \Subset Q$. 
Then, the direct scattering problem is to determine the total field $u=u^{sca}+u^{inc}$ such that
\begin{equation}
\Delta u+k^2(1+q)u=0 \ \mathrm{in} \ \mathbb{R}^2, \label{1.2}
\end{equation}
\begin{equation}
\lim_{r \to \infty} \sqrt{r} \biggl( \frac{\partial u^{sca}}{\partial  r}-iku^{sca} \biggr)=0, \label{1.3}
\end{equation}
where $r=|x|$. 
The {\it Sommerfeld radiation condition} (\ref{1.3}) holds uniformly in all directions $\hat{x}:=\frac{x}{|x|}$. 
Furthermore, the problem (\ref{1.2})--(\ref{1.3}) is equivalent to the {\it Lippmann-Schwinger integral equation}
\begin{equation}
u(x, \theta)=u^{inc}(x, \theta)+k^2\int_{Q}q(y)u(y, \theta)\Phi(x,y)dy, \label{1.4}
\end{equation}
where $\Phi(x,y)$ denotes the fundamental solution to Helmholtz equation in $\mathbb{R}^2$, that is, 
\begin{equation}
\Phi(x,y):= \displaystyle \frac{i}{4}H^{(1)}_0(k|x-y|), \ x \neq y, \label{1.5}
\end{equation}
where $H^{(1)}_0$ is the Hankel function of the first kind of order one. 
It is well known that there exists a unique solution $u^{sca}$ of the problem (\ref{1.2})--(\ref{1.3}), and it has the following asymptotic behaviour,
\begin{equation}
u^{sca}(x, \theta)=\frac{\mathrm{e}^{ikr}}{\sqrt{r}}\Bigl\{ u^{\infty}(\hat{x},\theta)+O\bigl(1/r \bigr) \Bigr\} , \ r \to \infty, \ \ \hat{x}:=\frac{x}{|x|}. \label{1.6}
\end{equation}
The function $u^{\infty}$ is called the {\it far field pattern} of $u^{sca}$, and it has the form
\begin{equation}
u^{\infty}(\hat{x},\theta)=\frac{k^2}{4\pi}\int_{Q}\mathrm{e}^{-ik \hat{x} \cdot y} u(y, \theta)q(y)dy=:\mathcal{F}_{\theta}q(\hat{x}), \label{1.7}
\end{equation}
where the far field mapping $\mathcal{F}_{\theta}:L^{2}(Q) \to L^{2}(\mathbb{S}^{1})$ is defined in the second equality for each incident direction $\theta \in \mathbb{S}^{1}$. 
For further details of these direct scattering problems, we refer to Chapter 8 of \cite{ColtonKress}. 
\par
We consider the inverse scattering problem to reconstruct the function $q$ from the far field pattern $u^{\infty}(\hat{x}, \theta_n)$ for all directions $ \hat{x} \in \mathbb{S}^{1}$ and several directions $\{ \theta_n \}_{n=1}^{N}\subset \mathbb{S}^{1}$ with some $N \in \mathbb{N}$, and one fixed wave number $k>0$.
It is well known that the function $q$ is uniquely determined from the far field pattern $u^{\infty}(\hat{x}, \theta)$ for all $ \hat{x}, \theta \in \mathbb{S}^{1}$ and one fixed $k>0$ (see, e.g., \cite{bukhgeim2008recovering, novikov1988multidimensional, ramm1988recovery}), but the uniqueness for several incident plane wave is an open question. 
For impenetrable obstacle scattering case, if we assume that the shape of scatterer is a polyhedron or ball, then the uniqueness for a single incident plane wave is proved (see \cite{alessandrini2005determining, cheng2003uniqueness, liu2006uniqueness, liu1997inverse}). 
Recently in \cite{alberti2020infinitedimensional}, they showed the Lipschitz stability for inverse medium scattering with finite measurements $\{ u^{\infty}(\hat{x}_{i}, \theta_{j}) \}_{i,j=1,...,N}$ for large $N \in \mathbb{N}$ under the assumption that the true function belongs to some compact and convex subset of finite-dimensional subspace. 
\par
Our problem for equation (\ref{1.7}) with finite measurements $\{ u^{\infty}(\cdot, \theta_n) \}_{n=1}^{N}$ is not only ill-posed, but also nonlinear, that is, the far field mappings $\mathcal{F}_{\theta}$ is nonlinear because $u(\cdot, \theta)$ in (\ref{1.7}) is a solution for the Lippmann-Schwinger integral equation (\ref{1.4}), which depends on $q$. 
Existing methods for solving nonlinear inverse problem can be roughly categorized into two groups: iterative optimization methods and qualitative methods. 
The iterative optimization method (see e.g., \cite{Bakushinsky, ColtonKress, Giorgi, Hohage, Kaltenbacher}) does not require many measurements, however it require the initial guess which is the starting point of the iteration. 
It must be appropriately chosen by a priori knowledge of the unknown function $q$, otherwise, the iterative solution could not converge to the true function. 
On the other hand, the qualitative method such as the linear sampling method \cite{ColtonKirsch}, the no-response test \cite{Honda}, the probe method \cite{Ikehata}, the factorization method \cite{KirschGrinberg}, and the singular sources method \cite{Potthast}, does not require the initial guess and it is computationally faster than the iterative method. 
However, the disadvantage of the qualitative method is to require uncountable many measurements. 
For the survey of the qualitative method, we refer to \cite{NakamuraPotthast}.
Recently in \cite{ito2012direct, Liu_2018}, they suggested the reconstruction method from a single incident plane wave although the rigorous justifications are lacked.
\par
If the total field $u$ in (\ref{1.7}) is replaced by the incident field $u^{inc}$, the nonlinear equation (\ref{1.7}) is transformed into the linear equation
\begin{equation}
u^{\infty}_{B}(\hat{x},\theta)=\frac{k^2}{4\pi}\int_{Q}\mathrm{e}^{-ik \hat{x} \cdot y} u^{inc}(y, \theta)q(y)dy=:\mathcal{F}_{B, \theta}q(\hat{x}), \label{1.8}
\end{equation}
which is known as the {\it Born approximation}. 
The function $u^{\infty}_{B}$ is a good approximation of the far field pattern $u^{\infty}$ when $k>0$ and the value of $q$ are very small (see (\ref{1.4})). 
Another interpretation is that the Born approximation is the Fr\'echet derivative of the far field mapping $\mathcal{F}$ at $q=0$. 
For further readings of the inverse scattering problem with the Born approximation, we refer to \cite{Bakushinsky, Bao, ColtonKress, Kirsch2, Pike}. 
In this paper, we study the linear integral equation (\ref{1.8}) instead of the nonlinear one (\ref{1.7}).
This paper is the first part of our works, and in the forthcoming paper, we will study the nonlinear integral equation (\ref{1.7}). 
\par
Although the inverse scattering problem become linear by the Born approximation, the linear equation (\ref{1.8}) is ill-posed, which means that there does not generally exist the inverse $\mathcal{F}^{-1}_{B, \theta}$ of the operator $\mathcal{F}_{B, \theta}$. 
A common technique to solve linear and ill-posed inverse problems is the {\it Tikhonov regularization method} (see e.g., \cite{Cakoni, Hanke, Kress, NakamuraPotthast}). 
A natural approach applying regularization method to our situation is to put all available measurements $\{ u^{\infty}_{B}(\cdot, \theta_n) \}_{n=1}^{N}$ and all far field mappings $\{ \mathcal{F}_{B, \theta_n} \}_{n=1}^{N}$ into one long vectors $\vec{u}^{\infty}_{B}$ and $\vec{\mathcal{F}}_{B}$, respectively, and to apply the Tikhonov regularization method to the big system equation $\vec{u}^{\infty}=\vec{\mathcal{F}}_{B}q$. 
We shall call this way the {\it Full data Tikhonov}.
\par
In this paper, we propose the reconstruction scheme based on {\it Kalman filter}. 
The Kalman filter (see the original paper \cite{Kalman}) is the algorithm to estimate the unknown state in the dynamics system by using the time sequential measurements. 
It has many applications such as navigations and tracking objects, and for further readings, we refer to \cite{Grewal, Jazwinski, Kalman, NakamuraPotthast}. 
\par
The contributions of this paper are the following.
\begin{itemize}
  \item[(A)] We propose the reconstruction algorithm for solving the linear inverse scattering problem (\ref{1.8}) based on the Kalman Filter (see (\ref{4.21})--(\ref{4.23})).
  \item[(B)] We show that in the linear problem, the Full data Tikhonov is equivalent to the Kalman Filter (see Theorem \ref{equivalence}).
\end{itemize}
(A) means that we can estimate the unknown function $q$ by updating every time to give the far field pattern $u^{\infty}_{B}(\cdot, \theta_{n})$ with one incident direction $\theta_n$ without waiting for all measurements $\{ u^{\infty}_{B}(\cdot, \theta_{n}) \}_{n=1}^{N}$.
Furthermore, (B) means that the final solution of the Kalman filter coincides with the solution $q^{FT}_{N}$ of the Full data Tikhonov when the same initial guess is employed.
The advantage of the Kalman filter over the Full data Tikhonov is that we do not require to construct the big system equation $\vec{u}^{\infty}_{B}=\vec{\mathcal{F}}_{B}q$, which reduces computational costs. 
Instead, we update not only state, but also the weight of the norm for the state space, which is associated with the update of the covariance matrices of the state in the statistical viewpoint (see Section \ref{Stochastic viewpoints of Kalman filter}).
\par
This paper is organized as follows. In Section \ref{Tikhonov regularization method}, we briefly recall the Tikhonov regularization theory. In Sections \ref{Full data Tikhonov}, we give the algorithm of the Full data Tikhonov. In Section \ref{Kalman Filter Section}, we give the algorithm of the Kalman filter, and show that it is equivalent to the Full data Tikhonov. In section \ref{Stochastic viewpoints of Kalman filter}, we discuss the stochastic viewpoints of Kalman filter. Finally in Section \ref{Numerical examples}, we give numerical examples to demonstrate our theoretical results.
\section{Tikhonov regularization method}\label{Tikhonov regularization method}
Tikhonov regularization is the method to provide the stable approximate solution for linear and ill-posed inverse problem. 
In this section, we briefly recall the regularization approach.
For further readings, 
we refer to e.g., \cite{Cakoni, Hanke, Kress, NakamuraPotthast}. 
In Sections 2--5, we consider the general functional analytic situation of our inverse scattering problem.
\par
Let $X$ and $Y$ be Hilbert spaces over complex variables $\mathbb{C}$, which are associated with the state space $L^{2}(Q)$ of the inhomogeneous medium function $q$, and the observation space $L^{2}(\mathbb{S}^1)$ of the far field pattern $u^{\infty}$, respectively, and let $A:X \to Y$ be a compact linear operator from $X$ to $Y$, which is associated with the observation operator $\mathcal{F}_{B}:L^{2}(Q) \to L^{2}(\mathbb{S}^{1})$ defined in (\ref{1.8}) as the far field mapping.
We consider the following problem to determine $\varphi \in X$ given $f \in Y$.
\begin{equation}
A\varphi = f. \label{2.1}
\end{equation} 
Since the observation operator $A$ is not generally invertible, the equation (\ref{2.1}) is replaced by
\begin{equation}
\alpha \varphi + A^{*}A\varphi = A^{*}f, \label{2.2}
\end{equation}
which was derived from the multiplication with the adjoint $A^{*}$ of the operator $A$ and the addition of $\alpha \varphi$ where the regularization parameter $\alpha>0$ in (\ref{2.1}). 
We call the solution $\varphi_{\alpha}$ of the equation (\ref{2.2}) the regularized solution of (\ref{2.1}). 
The following lemma is well known as the properties of the regularized solution $\varphi_{\alpha}$ (see e.g., Chapter 4 of \cite{ColtonKress}, Section 4 of \cite{Groetsch}, and Chapter 3 of \cite{NakamuraPotthast}). 
\begin{lem}\label{lemma Tikhonov}
Let $X$ and $Y$ be Hilbert spaces and let $A:X \to Y$ be a compact linear operator from $X$ to $Y$. Then, followings hold.
\begin{description}
\item[(i)] (Theorems 4.13 in \cite{ColtonKress}) The operator ($\alpha I + A^{*}A$) is bounded invertible.

\item[(ii)] (Theorem 4.14 in \cite{ColtonKress}) There exists a unique $\varphi_{\alpha}$ such that
\begin{equation}
\alpha \left\| \varphi \right\|^{2}_{X} + \left\| f - A\varphi \right\|^{2}_{Y} = \mathrm{inf}_{\varphi \in X}\left\{\alpha \left\| \varphi \right\|^{2}_{X} + \left\| f - A\varphi \right\|^{2}_{Y}  \right\}. \label{2.4}
\end{equation} 
The minimizer $\varphi_{\alpha}$ is given by the unique solution of (\ref{2.2}) which has the form
\begin{equation}
\varphi_{\alpha}= (\alpha I + A^{*}A)^{-1}A^{*}f,
\end{equation}
and depends continuously on $f$.

\item[(iii)] (Lemma 3.2.2 in \cite{NakamuraPotthast} and Section 4.3 of \cite{Groetsch}) Let $X$ be finite-dimensional.
Then, we have
\begin{equation}
\varphi_{\alpha} \to A^{\dag}f, \ \alpha \to 0,
\end{equation}
if $f \in \mathrm{R}(A)$ where the operator $A^{\dag}$ is the pseudo inverse of the operator $A$ defined by $A^{\dag}:=(A^{*}A)^{-1}A^{*}$.
Furthermore, $A^{\dag}f$ is the least squares solution, which is minimizer of the following problem
\begin{equation}
\left\| A \varphi - f \right\|=\mathrm{min}_{\varphi \in X}\left\{ \left\| A \varphi -f \right\|_{Y} \right\}. \label{2.7}
\end{equation}

\item[(iv)] (Theorem 3.1.8 in \cite{NakamuraPotthast}) Let $A$ be injective, and let $f$ be of the form $f=A\varphi^{*}$. 
Then, we have
\begin{equation}
\varphi_{\alpha} \to \varphi^{*}, \ \alpha \to 0.
\end{equation}

\item[(v)] (Theorem 3.1.10 in \cite{NakamuraPotthast})  Let $A$ be injective.
If $f \in \mathrm{R}(A)$, then there exists $C=C_f$ such that
\begin{equation}
\left\| \varphi_{\alpha} \right\| \leq C, \ \alpha>0, \label{2.5}
\end{equation}
and if $f \notin \mathrm{R}(A)$, then $\left\| \varphi_{\alpha} \right\|_{X} \to \infty$ as $\alpha \to 0$.
\end{description}
\end{lem}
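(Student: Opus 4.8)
The plan is to handle the five parts by first establishing the algebraic backbone in (i)--(ii) and then invoking the singular system of the compact operator $A$ for the asymptotic statements (iii)--(v). For (i), I would observe that $A^{*}A$ is self-adjoint and positive semidefinite, since $\langle A^{*}A\varphi, \varphi\rangle_{X} = \|A\varphi\|_{Y}^{2} \geq 0$. Consequently $\alpha I + A^{*}A$ satisfies the coercivity estimate $\langle (\alpha I + A^{*}A)\varphi, \varphi\rangle_{X} \geq \alpha\|\varphi\|_{X}^{2}$, which forces the operator to be bounded below, hence injective with closed range; self-adjointness then yields dense range, so the operator is bijective, and the same estimate gives $\|(\alpha I + A^{*}A)^{-1}\| \leq 1/\alpha$. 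For (ii), I would expand the Tikhonov functional and complete the square, writing it as $\langle (\alpha I + A^{*}A)\varphi, \varphi\rangle_{X} - 2\,\mathrm{Re}\,\langle \varphi, A^{*}f\rangle_{X} + \|f\|_{Y}^{2}$; strict convexity gives a unique minimizer characterized by the normal equation $(\alpha I + A^{*}A)\varphi = A^{*}f$, whose solution $\varphi_{\alpha} = (\alpha I + A^{*}A)^{-1}A^{*}f$ exists by (i) and depends continuously on $f$ because it is the image of $f$ under a fixed bounded linear map.

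For the remaining parts I would introduce the singular system $(\sigma_{n}, \varphi_{n}, g_{n})$ of $A$, obtained by applying the spectral theorem for compact self-adjoint operators to $A^{*}A$. This diagonalizes the regularized solution as $\varphi_{\alpha} = \sum_{n} \frac{\sigma_{n}}{\alpha + \sigma_{n}^{2}}\langle f, g_{n}\rangle_{Y}\,\varphi_{n}$, with filter factors $\frac{\sigma_{n}^{2}}{\alpha + \sigma_{n}^{2}} \in [0,1)$. For (iii), finite-dimensionality of $X$ together with the invertibility of $A^{*}A$ implicit in the definition of $A^{\dagger}$ means that continuity of inversion on the open set of invertible operators gives $(\alpha I + A^{*}A)^{-1} \to (A^{*}A)^{-1}$ and hence $\varphi_{\alpha} \to A^{\dagger}f$ as $\alpha \to 0$. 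The least-squares claim follows because $A^{\dagger}f$ satisfies the normal equation $A^{*}A\varphi = A^{*}f$, which is exactly the first-order optimality condition for minimizing $\|A\varphi - f\|_{Y}$.

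Parts (iv) and (v) then reduce to a limiting analysis of the filter factors. Writing $f = A\varphi^{*}$ in (iv), one gets $\langle f, g_{n}\rangle_{Y} = \sigma_{n}\langle \varphi^{*}, \varphi_{n}\rangle_{X}$, so that $\varphi_{\alpha} = \sum_{n} \frac{\sigma_{n}^{2}}{\alpha + \sigma_{n}^{2}}\langle \varphi^{*}, \varphi_{n}\rangle_{X}\,\varphi_{n}$; since the filter factors increase to $1$ and are bounded by $1$, dominated convergence applied to $\|\varphi_{\alpha}-\varphi^{*}\|_{X}^{2} = \sum_{n}\left(\frac{\alpha}{\alpha+\sigma_{n}^{2}}\right)^{2}|\langle \varphi^{*}, \varphi_{n}\rangle_{X}|^{2}$ gives $\varphi_{\alpha} \to \varphi^{*}$, where injectivity of $A$ is used to guarantee that $\{\varphi_{n}\}$ is complete in $X$ (as $\ker(A^{*}A)=\ker A = \{0\}$) so that the expansion reproduces $\varphi^{*}$. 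For the first half of (v) the same representation yields $\|\varphi_{\alpha}\|_{X}^{2} = \sum_{n} \left(\frac{\sigma_{n}^{2}}{\alpha + \sigma_{n}^{2}}\right)^{2} |\langle \varphi^{*}, \varphi_{n}\rangle_{X}|^{2} \leq \|\varphi^{*}\|_{X}^{2}$, giving the uniform bound with $C = \|\varphi^{*}\|_{X}$.

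The main obstacle I anticipate is the divergence assertion in the second half of (v), where $f \notin \mathrm{R}(A)$. Here I would use the Picard criterion: $f$ lies in $\mathrm{R}(A)$ precisely when $\sum_{n} \sigma_{n}^{-2}|\langle f, g_{n}\rangle_{Y}|^{2} < \infty$, so failure of membership makes this series diverge. Since $\|\varphi_{\alpha}\|_{X}^{2} = \sum_{n} \frac{\sigma_{n}^{2}}{(\alpha + \sigma_{n}^{2})^{2}}|\langle f, g_{n}\rangle_{Y}|^{2}$ and each summand increases as $\alpha$ decreases, monotone convergence forces $\lim_{\alpha \to 0}\|\varphi_{\alpha}\|_{X}^{2} = \sum_{n} \sigma_{n}^{-2}|\langle f, g_{n}\rangle_{Y}|^{2} = \infty$, hence $\|\varphi_{\alpha}\|_{X} \to \infty$. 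The delicate point is the interplay between the Picard series and any component of $f$ lying in $\overline{\mathrm{R}(A)}^{\perp}=\ker A^{*}$, which is annihilated by $A^{*}$ and therefore must be treated separately; all of this is standard and matches the cited statements in \cite{ColtonKress, NakamuraPotthast, Groetsch}, which I would invoke to keep the exposition self-contained.
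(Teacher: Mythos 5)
The paper never proves this lemma at all --- it is stated as a collection of known results with pointers to \cite{ColtonKress}, \cite{NakamuraPotthast}, and \cite{Groetsch} --- so your proposal must stand on its own. Parts (i), (ii), (iii), (iv) and the boundedness half of (v) are correct, and your route (coercivity of $\alpha I + A^{*}A$ for (i), completing the square for (ii), continuity of inversion on invertible matrices for (iii), and the singular system with filter factors $\sigma_n^2/(\alpha+\sigma_n^2)$ for (iv) and (v)) is exactly the standard argument of the cited textbooks.

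The divergence half of (v), however, contains a genuine gap, and it is precisely the point you postpone in your last sentence. Decompose $f=f_1+f_2$ with $f_1\in\overline{\mathrm{R}(A)}$ and $f_2\in \mathrm{N}(A^{*})=\mathrm{R}(A)^{\perp}$. Since $A^{*}f_2=0$, the regularized solution $\varphi_{\alpha}=(\alpha I+A^{*}A)^{-1}A^{*}f$ sees only $f_1$, and your Picard-series/monotone-convergence computation proves divergence exactly when $f_1\notin\mathrm{R}(A)$. But $f\notin\mathrm{R}(A)$ does not force $f_1\notin\mathrm{R}(A)$: whenever $A$ fails to have dense range, pick $0\neq f_2\in\mathrm{N}(A^{*})$ and set $f=A\varphi^{*}+f_2$. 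Then $f\notin\mathrm{R}(A)$ (because $\mathrm{R}(A)\cap\mathrm{R}(A)^{\perp}=\{0\}$), yet $\varphi_{\alpha}=(\alpha I+A^{*}A)^{-1}A^{*}A\varphi^{*}\to\varphi^{*}$ by your own argument for (iv), so $\left\|\varphi_{\alpha}\right\|_{X}$ stays bounded. Hence the case you set aside as ``standard'' is not a separable technicality: the dichotomy in (v) is false for a general injective compact $A$, and no completion of your argument can rescue it. The statement (and therefore your proof) needs the additional hypothesis that $A$ has dense range --- equivalently that $A^{*}$ is injective, so that $f_1=f$ --- or else the condition $f\notin\mathrm{R}(A)$ must be replaced by $f\notin\mathrm{R}(A)\oplus\mathrm{N}(A^{*})$. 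This is harmless for the paper's application, since the adjoint of the Born far field operator $\mathcal{F}_{B,\theta}$ is (up to a factor $e^{-ik\theta\cdot y}$) a Herglotz-type operator and is injective, so dense range holds there; but as an abstract lemma your write-up should state the extra hypothesis explicitly rather than defer it.
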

\begin{rem}\label{Remark for Tikhonov}
We observe from (iii) that if $X$ is finite-dimensional and $f=A\varphi_{true}$ where $\varphi_{true}$ is the true solution of (\ref{2.1}), the regularized solution $\varphi_{\alpha}$ converges to the least squares solution $A^{\dag}A\varphi_{true}$. 
We remark that the operator $A^{\dag}A$ is an orthogonal projection onto $\mathrm{R}(A^{*})=\mathrm{N}(A)^{\bot}$ (see Lemma 3.2.3 in \cite{NakamuraPotthast}).
Therefore, in addition if the operator $A$ is injective, then the least squares solution $A^{\dag}A\varphi_{true}$ coincides with the true solution $\varphi^{true}$.
\end{rem}
\section{Full data Tikhonov}\label{Full data Tikhonov}
The natural approach for solving the equation (\ref{1.8}) is to put all available measurements $\{ u^{\infty}_{B, n} \}_{n=1}^{N}$ and all far field mappings $\{ \mathcal{F}_{B, n} \}_{n=1}^{N}$, where the index $n$ is associated with some incident angle $\theta_n \in \mathbb{S}^{1}$, into one long vector $\vec{u}^{\infty}_{B}$ and $\vec{\mathcal{F}}_{B}$, respectively, and to employ the regularized approach discussed in the Section 2. 
In order to study the above general situation, let $f_1,..., f_N \in Y$ be measurements, let $A_1,...,A_N$ be observation operators, and let us consider the problem to determine $\varphi \in X$ such that 
\begin{equation}
A_n \varphi = f_n, \label{3.1}
\end{equation}
for all $n=1,...,N$. 
Now, we assume that we have the initial guess $\varphi_0 \in X$, which is the starting point of the algorithm, and  is appropriately determined by a priori information of the true solution $\varphi^{true}$. 
Then, we consider the minimization problem of the following functional.
\begin{eqnarray}
J_{Full, N}(\varphi)&:=&\alpha \left\| \varphi - \varphi_0 \right\|^{2}_{X} + \left\| \vec{f} - \vec{A}\varphi \right\|^{2}_{Y^{N},  R^{-1}}
\nonumber\\
&=&\alpha \left\| \varphi - \varphi_0 \right\|^{2}_{X} + \sum_{n=1}^{N}\left\| f_n - A_n\varphi \right\|^{2}_{Y, R^{-1}}, \label{3.2}
\end{eqnarray}
where  $\vec{f}:=\left(
    \begin{array}{cc}
      f_1 \\
      \vdots \\
      f_N
    \end{array}
  \right)$, and $\vec{A}:=\left(
    \begin{array}{cc}
      A_1 \\
      \vdots \\
      A_N
    \end{array}
  \right)$. 
The norm $\left\| \cdot \right\|^{2}_{Y, R^{-1}}:=\langle \cdot, R^{-1} \cdot \rangle_{Y}$ is a weighted norm with a positive definite symmetric invertible operator $R: Y \to Y$, which is interpreted as the covariance matrices of the observation error distribution from a statistical viewpoint in the case when $Y$ is the Euclidean space (see Section \ref{Stochastic viewpoints of Kalman filter}). 
With $\tilde{\varphi}=\varphi-\varphi_0$, the problem (\ref{3.1}) is transformed into 
\begin{equation}
\tilde{J}_{Full, N}(\tilde{\varphi}):=\alpha \left\| \tilde{\varphi} \right\|^{2}_{X} + \left\| (\vec{f} - \vec{A}\varphi_0 ) - \vec{A}\tilde{\varphi} \right\|^{2}_{Y^{N}}. \label{3.3}
\end{equation}
By Lemma 2.1, the minimizer $\tilde{\varphi}_{\alpha}$ of (\ref{3.3}) is given by
\begin{equation}
\tilde{\varphi}_{\alpha} = (\alpha I + \vec{A}^{*}\vec{A})^{-1}\vec{A}^{*}\left(\vec{f} - \vec{A}\varphi_0 \right), \label{3.4}
\end{equation}
which implies that 
\begin{equation}
\varphi^{FT}_{N} := \varphi_0 + (\alpha I + \vec{A}^{*}\vec{A})^{-1}\vec{A}^{*}\left(\vec{f} - \vec{A}\varphi_0 \right), \label{3.5}
\end{equation}
is the minimizer of (\ref{3.2}). 
We call this the {\it Full data Tikhonov}. 
Here, $\vec{A}^{*}$ is the adjoint operator with respect to $\langle \cdot, \cdot \rangle_{X}$ and $\langle \cdot, \cdot \rangle_{Y^{N}, R^{-1}}$.
We calculate
\begin{eqnarray}
\langle \vec{f}, \vec{A} \varphi \rangle_{Y^N, R^{-1}} &=& \sum_{n=1}^{N} \langle f_n, R^{-1}A_n \varphi \rangle_{Y}
\nonumber\\
&=&\sum_{n=1}^{N} \langle A^{H}_n R^{-1} f_n, \varphi \rangle_{X} = \langle  \vec{A}^{H} R^{-1} \vec{f}, \varphi \rangle_{X} \label{3.6}
\end{eqnarray}
which implies that 
\begin{equation}
\vec{A}^{*}=\vec{A}^{H} R^{-1} \label{3.7}
\end{equation}
where $A_{n}^{H}$ and $\vec{A}^{H}$ are the adjoint operator with respect to usual scalar products $\langle \cdot, \cdot \rangle_{X}$, $\langle \cdot, \cdot \rangle_{Y}$ and $\langle \cdot, \cdot \rangle_{X}$, $\langle \cdot, \cdot \rangle_{Y^{N}}$, respectively.
Then, the Full data Tikhonov solution in (\ref{3.5}) is of the form
\begin{equation}
\varphi^{FT}_{N} = \varphi_0 + \left( \alpha I + \vec{A}^{H} R^{-1}\vec{A} \right)^{-1}\vec{A}^{H} R^{-1}\left(\vec{f} - \vec{A}\varphi_0 \right). \label{3.8}
\end{equation}
\par
However, the solution (\ref{3.8}) of the Full data Tikhonov is computationally expensive when the number $N$ of measurements is increasing in which we have to construct the bigger system $\vec{A}\varphi= \vec{f}$.
So, let us consider the alternative approach based on the Kalman filter in the next section. 
\section{Kalman filter}\label{Kalman Filter Section}
The Kalman filter is the algorithm to estimate the unknown state in the dynamics system by using the sequential measurements over time.
In the usual Kalman filter, the model operator to describe the process of the state in the dynamics system is defined (see e.g., Chapter 5 of \cite{NakamuraPotthast}). 
In our problem, it corresponds to the identity mapping because unknown function $q$ does not develop over time. 
\par
Let us formulate the Kalman filter algorithm based on the functional analytic situation using the same notations described in Sections \ref{Tikhonov regularization method} and \ref{Full data Tikhonov}.
In \cite{Freitag, NakamuraPotthast}, the similar arguments of the following was discussed in the special case when $X$ and $Y$ are the Euclidean spaces. 
In this section, we discuss more general situation, that is, the Hilbert space over complex variables $\mathbb{C}$, which is applicable to our inverse scattering problem.
\par
First, we consider the following minimization problem when one measurement $f_1 \in Y$, observation operator $A_1$, and the initial guess $\varphi_0 \in X$ are given.
\begin{equation}
J_{1}(\varphi):=\alpha \left\| \varphi - \varphi_0 \right\|^{2}_{X} + \left\| f_1 - A_1 \varphi \right\|^{2}_{Y, R^{-1}}. \label{4.1}
\end{equation}
By using a weighted norm $\left\| \cdot \right\|^{2}_{X, B_{0}^{-1}}:=\langle \cdot, B_{0}^{-1} \cdot \rangle_{X}$ where $B_{0}:= \frac{1}{\alpha} I$, the functional $J_1$ can be of the form
\begin{equation}
J_{1}(\varphi)= \left\| \varphi - \varphi_0 \right\|^{2}_{X, B^{-1}_{0}} + \left\| f_1 - A_1 \varphi \right\|^{2}_{Y, R^{-1}}, \label{4.2}
\end{equation}
and its unique minimizer $\varphi_1$ is given by
\begin{equation}
\varphi_{1} := \varphi_0 + (I + A_{1}^{*}A_{1})^{-1}A_{1}^{*}\left(f - A_1\varphi_0 \right), \label{4.3}
\end{equation}
where $A^{*}_{1}$ is the adjoint operator with respect to weighted scalar products $\langle \cdot, \cdot \rangle_{X, B_{0}^{-1}}$ and $\langle \cdot, \cdot \rangle_{Y, R^{-1}}$. 
We calculate
\begin{eqnarray}
\langle f, A_{1} \varphi \rangle_{Y, R^{-1}} &=& \langle f, R^{-1}A_{1} \varphi \rangle_{Y}
\nonumber\\
&=& \langle A^{H}_{1} R^{-1} f, \varphi \rangle_{X}
\nonumber\\
&=& \langle B_{0} A^{H}_{1} R^{-1} f, \varphi \rangle_{X, B_{0}^{-1}}, \label{4.4}
\end{eqnarray}
which implies that 
\begin{equation}
A^{*}_{1}=B_{0}A^{H}_{1}R^{-1}, \label{4.5}
\end{equation}
where $A_{1}^{H}$ is the adjoint operator with respect to usual scalar products $\langle \cdot, \cdot \rangle_{X}$ and $\langle \cdot, \cdot \rangle_{Y}$. 
Then, we have
\begin{eqnarray}
\varphi_{1} &=& \varphi_0 + (I + B_{0}A^{H}_{1}R^{-1}A_{1})^{-1}B_{0}A^{H}_{1}R^{-1}\left(f - A_1\varphi_0 \right)
\nonumber\\
&=&\varphi_0 +  (B^{-1}_{0} + A^{H}_{1}R^{-1}A_{1})^{-1}A^{H}_{1}R^{-1}\left(f_1 - A_1\varphi_0 \right). \label{4.6}
\end{eqnarray}
\par
Next, we assume that one more measurement $f_2 \in Y$ and observation operator $H_2$ are given.
The functional for two measurements is given by
\begin{eqnarray}
J_{Full, 2}(\varphi)&:=& \left\| \varphi - \varphi_0 \right\|^{2}_{X, B_{0}^{-1}} + \left\| f_1 - A_1 \varphi \right\|^{2}_{Y, R^{-1}} + \left\| f_2 - A_2\varphi \right\|^{2}_{Y, R^{-1}}. 
\nonumber
\\
&=& J_{1}(\varphi) + \left\| f_2 - A_2\varphi \right\|^{2}_{Y, R^{-1}}. 
\label{4.7}
\end{eqnarray}
The question is whether we can find $B_1$ such that $J_{Full, 2}(\varphi)=J_2(\varphi) + c$ where $c$ is a constant number independently of $\varphi$, and the functional $J_{2}(\varphi)$ is defined by
\begin{equation}
J_{2}(\varphi) = \left\| \varphi - \varphi_1 \right\|^{2}_{X, B_{1}} + \left\| f_2 - A_2 \varphi \right\|^{2}_{Y, R^{-1}},  \label{4.8}
\end{equation} 
where $\varphi_1$ is defined by (\ref{4.6}). 
To answer this question, we show the following lemma.
\begin{lem}\label{lemma 4.1}
Set $B_1:=\left(B^{-1}_{0} + A^{H}_{1}R^{-1}A_{1}\right)^{-1}$. Then, 
\begin{equation}
J_{1}(\varphi) = \left\| \varphi - \varphi_1 \right\|^{2}_{X, B_{1}^{-1}} + c, \label{4.9}
\end{equation}
where $c$ is some constant independently of $\varphi$. 
\end{lem}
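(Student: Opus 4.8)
The plan is to expand the quadratic functional $J_1$ and complete the square in $\varphi$. Since $J_1(\varphi)$ is a sum of two weighted squared norms, it is a quadratic form in $\varphi$, so it must be expressible as a single weighted squared norm centered at its minimizer $\varphi_1$ (plus a constant). The claim essentially says that the natural weight for that single norm is $B_1^{-1} = B_0^{-1} + A_1^H R^{-1} A_1$, the inverse of the posterior covariance. Let me verify this algebraically.

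Let me write out $J_1$ and collect terms.
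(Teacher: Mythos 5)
Your plan --- expand $J_{1}$ and complete the square, with the key observation that the quadratic coefficient is $B_{1}^{-1}=B_{0}^{-1}+A_{1}^{H}R^{-1}A_{1}$ --- is exactly the approach the paper takes. The problem is that your proposal stops where the proof has to begin: it ends with ``let me write out $J_{1}$ and collect terms,'' and no terms are ever collected, so as it stands nothing is actually proved. The part that is genuinely immediate is the quadratic coefficient: expanding the two weighted norms and moving $A_1$ across the inner product gives
\begin{equation}
J_{1}(\varphi)=\left\langle \varphi, B_{1}^{-1}\varphi\right\rangle_{X}
-2\,\mathrm{Re}\left\langle \varphi,\, B_{0}^{-1}\varphi_{0}+A_{1}^{H}R^{-1}f_{1}\right\rangle_{X}+c_{0}.
\end{equation}
The part that carries the content of the lemma, and which your proposal never addresses, is that the center of this completed square is precisely the $\varphi_{1}$ already defined in (\ref{4.6}), not merely ``some minimizer.'' Matching the linear terms requires the identity
\begin{equation}
B_{1}^{-1}\varphi_{1}=B_{0}^{-1}\varphi_{0}+A_{1}^{H}R^{-1}f_{1},
\end{equation}
which follows by applying $B_{1}^{-1}=B_{0}^{-1}+A_{1}^{H}R^{-1}A_{1}$ to (\ref{4.6}) and cancelling the $A_{1}^{H}R^{-1}A_{1}\varphi_{0}$ terms; this is exactly step (\ref{4.11}) in the paper's proof, and it is the hinge on which the whole lemma turns.

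Two further points your executed computation would have to respect. First, alternatively to the algebraic identity above, you could argue that $\varphi_{1}$ is the unique minimizer of the strictly convex quadratic $J_{1}$ (as established in (\ref{4.2})--(\ref{4.6})) and that a positive definite quadratic always equals its minimum plus the quadratic form of the offset from its minimizer; that is a legitimate shortcut, but you must say it --- ``it must be expressible as a single weighted squared norm centered at its minimizer'' is only true because $B_{1}^{-1}$ is self-adjoint and positive definite, and because $\varphi_{1}$ of (\ref{4.6}) has been identified as that minimizer. Second, the spaces here are complex Hilbert spaces, so all cross terms appear as $2\,\mathrm{Re}\langle\cdot,\cdot\rangle_{X}$ rather than $2\langle\cdot,\cdot\rangle_{X}$; the paper's computation (\ref{4.10})--(\ref{4.12}) tracks this carefully, and a completed proof on your part would need to do the same.
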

\begin{proof}
We calculate
\begin{eqnarray}
J_{1}(\varphi)&=&\left\langle \varphi - \varphi_0, B^{-1}_{0}\left(\varphi - \varphi_0 \right) \right\rangle_{X} + \left\langle f_1 - A_{1} \varphi, R^{-1}\left( f_1 - A_{1} \varphi \right) \right\rangle_{Y}
\nonumber\\
&=& \left\langle \varphi, B^{-1}_{0}\varphi \right\rangle_{X} -2 \mathrm{Re} \left\langle \varphi, B^{-1}_{0}\varphi_0 \right\rangle_{X} + \left\langle \varphi_0, B^{-1}_{0}\varphi_0 \right\rangle_{X}
\nonumber\\
&&\ \ \ + \left\langle f_1, R^{-1}f_1 \right\rangle_{Y} -2 \mathrm{Re}\left\langle \varphi, A^{H}_{1}R^{-1}f_1 \right\rangle_{X} + \left\langle \varphi, A^{H}_{1}R^{-1}A_{1}\varphi \right\rangle_{X}.
\nonumber\\
&=& \left\langle \varphi, B^{-1}_{0}\varphi \right\rangle_{X} -2 \mathrm{Re} \left\langle \varphi, B^{-1}_{0}\varphi_0 \right\rangle_{X} -2 \mathrm{Re}\left\langle \varphi, A^{H}_{1}R^{-1}f_1 \right\rangle_{X} 
\nonumber\\
&&\ \ \ \ \ + \left\langle \varphi, A^{H}_{1}R^{-1}A_{1}\varphi \right\rangle_{X} + c_0
\nonumber\\
&=& \left\langle \varphi, B^{-1}_{1}\varphi \right\rangle_{X} -2 \mathrm{Re} \left\langle \varphi, B^{-1}_{0}\varphi_0 \right\rangle_{X} -2 \mathrm{Re}\left\langle \varphi, A^{H}_{1}R^{-1}f_1 \right\rangle_{X} + c_0, \nonumber\\ \label{4.10}
\end{eqnarray}
where we used $B_{1}^{-1}=\left(B^{-1}_{0} + A^{H}_{1}R^{-1}A_{1}\right)$. 
By (\ref{4.6}), we have 
\begin{eqnarray}
B^{-1}_{1}\left(\varphi - \varphi_{1} \right) &=& B^{-1}_{1}\varphi - B^{-1}_{1} \varphi_{1} \nonumber\\
&=&B^{-1}_{1}\varphi - \left(B^{-1}_{0} + A^{H}_{1}R^{-1}A_{1}\right) \varphi_{0} - A^{H}_{1}R^{-1}\left(f - A_1\varphi_0 \right) \nonumber\\
&=&B^{-1}_{1}\varphi - B^{-1}_{0}\varphi_0 - A^{H}_{1}R^{-1}f_1. \label{4.11}
\end{eqnarray}
By using (\ref{4.11}) and the self-adjointness of $B^{-1}_{1}$, we have
\begin{eqnarray}
&&\left\langle \varphi - \varphi_{1}, B^{-1}_{1}\left(\varphi - \varphi_{1} \right) \right\rangle_{X}
\nonumber\\
&=&\left\langle \varphi - \varphi_1, B^{-1}_{1}\varphi - B^{-1}_{0}\varphi_0 - A^{H}_{1}R^{-1}f_1 \right\rangle_{X} 
\nonumber\\
&=& \left\langle B^{-1}_{1} \left(\varphi - \varphi_1\right), \varphi  \right\rangle_{X} -  \left\langle \varphi, B^{-1}_{0}\varphi_0 \right\rangle_{X} -  \left\langle \varphi,  A^{H}_{1}R^{-1}f \right\rangle_{X} + c_1
\nonumber\\
&=&\left\langle B^{-1}_{1}\varphi - B^{-1}_{0}\varphi_0 - A^{H}_{1}R^{-1}f,  \varphi \right\rangle_{X}
\nonumber\\
&&\ \ \ \ \ \ \ \ \ \ \ \ \ -  \left\langle \varphi, B^{-1}_{0}\varphi_0 \right\rangle_{X} -  \left\langle \varphi,  A^{H}_{1}R^{-1}f \right\rangle_{X} + c_1
\nonumber\\
&=&\left\langle \varphi, B^{-1}_{1}\varphi \right\rangle_{X} -2 \mathrm{Re} \left\langle \varphi, B^{-1}_{0}\varphi_0 \right\rangle_{X} -2 \mathrm{Re}\left\langle \varphi, A^{H}_{1}R^{-1}f_1 \right\rangle_{X}  + c_1.
\nonumber\\ \label{4.12}
\end{eqnarray}
With (\ref{4.10}) and (\ref{4.12}), $J_{1}(\varphi)$ is of the form
\begin{equation}
J_{1}(\varphi)=\left\langle \varphi - \varphi_{1}, B^{-1}_{1}\left(\varphi - \varphi_{1} \right) \right\rangle_{X} + c_2. \label{4.13}
\end{equation}
where $c_0$, $c_1$, and $c_2$ are some constant numbers independently of $\varphi$. 
Lemma \ref{lemma 4.1} has been shown.
\end{proof}
This lemma tells us that $J_{Full, 2}(\varphi)$ is equivalent to $J_{2}(\varphi)$ in the sense of minimization with respect to $\varphi$. 
By the same argument in (\ref{4.2})--(\ref{4.6}), its unique minimizer $\varphi_2$ is given by 
\begin{equation}
\varphi_{2} := \varphi_1 +  (B^{-1}_{1} + A^{H}_{2}R^{-1}A_{2})^{-1}A^{H}_{2}R^{-1}\left(f_2 - A_2\varphi_1 \right). \label{4.14}
\end{equation}
\par
We can repeat the above arguments (\ref{4.1})--(\ref{4.14}) until all measurements $f_1,...,f_n$ and all observation operators $A_1,...,A_n$ are given. 
Then, we have following algorithms
\begin{equation}
\varphi_{n} := \varphi_{n-1} + K_{n}\left( f_{n}-A_{n} \varphi_{n-1} \right), \label{4.15}
\end{equation}
where the operator 
\begin{equation}
K_{n}:= \left( B^{-1}_{n-1} + A^{H}_{n}R^{-1}A_{n}\right)^{-1} A^{H}_{n}R^{-1}, \label{4.16}
\end{equation}
is called the {\it Kalman gain matrix}, and $B_n$ is defined by
\begin{equation}
B_n := \left(B^{-1}_{n-1} + A^{H}_{n}R^{-1}A_{n}\right)^{-1}. \label{4.17}
\end{equation}
Since we have
\begin{eqnarray}
\left( B^{-1}_{n-1} + A^{H}_{n}R^{-1}A_{n}\right)B_{n-1}A^{H}_{n}
&=&
A^{H}_{n} + A^{H}_{n}R^{-1}A_{n}B_{n-1}A^{H}_{n}
\nonumber\\
&=&
A^{H}_{n}R^{-1}\left(R + A_{n} B_{n-1} A^{H}_{n} \right), \nonumber
\end{eqnarray}
the Kalman gain matrix $K_{n}$ can be of the form
\begin{equation}
K_{n}= B_{n-1} A^{H}_{n}\left(R + A_{n} B_{n-1} A^{H}_{n} \right)^{-1}. \nonumber
\end{equation}
Here, we show the following lemma that the operator $B_n$ has another form.
\begin{lem}
Let $K_n$ be the Kalman gain matrix defined in (\ref{4.16}). 
Then, the operator $B_n$ has the following form
\begin{equation}
B_n = \left( I - K_n A_n \right) B_{n-1}. \label{4.18}
\end{equation}
\end{lem}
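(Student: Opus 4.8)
The plan is to work directly from the two defining relations, namely $K_n = \left( B^{-1}_{n-1} + A^{H}_{n}R^{-1}A_{n}\right)^{-1} A^{H}_{n}R^{-1}$ in (\ref{4.16}) and $B_n = \left(B^{-1}_{n-1} + A^{H}_{n}R^{-1}A_{n}\right)^{-1}$ in (\ref{4.17}). The first thing I would notice is that these two definitions share the very same inverse factor, so that the Kalman gain collapses to the compact form $K_n = B_n A^{H}_{n} R^{-1}$. This single observation is what makes the identity (\ref{4.18}) almost immediate, and it is the pivot of the whole argument.

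With that in hand, I would start from the identity $B^{-1}_n = B^{-1}_{n-1} + A^{H}_{n}R^{-1}A_{n}$ obtained by inverting (\ref{4.17}), multiply it on the left by $B_n$ and on the right by $B_{n-1}$, and simplify the telescoping products $B_n B^{-1}_n = I$ and $B^{-1}_{n-1} B_{n-1} = I$. This yields $B_{n-1} = B_n + B_n A^{H}_{n} R^{-1} A_{n} B_{n-1}$. Substituting $K_n = B_n A^{H}_{n} R^{-1}$ into the second term turns this into $B_{n-1} = B_n + K_n A_n B_{n-1}$, and rearranging gives $B_n = B_{n-1} - K_n A_n B_{n-1} = (I - K_n A_n) B_{n-1}$, which is exactly (\ref{4.18}).

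The computation itself is a one-line manipulation, so the only genuine point requiring care --- and the step I would treat as the main obstacle --- is justifying that all the inverses appearing are well defined and bounded. Here $B_0 = \frac{1}{\alpha} I$ is clearly invertible, and inductively $B^{-1}_n = B^{-1}_{n-1} + A^{H}_{n}R^{-1}A_{n}$ is the sum of a positive definite invertible operator and the nonnegative self-adjoint operator $A^{H}_{n}R^{-1}A_{n}$ (recall $R$ is positive definite symmetric invertible, so $A^{H}_{n}R^{-1}A_{n} \ge 0$), hence itself boundedly invertible by the same reasoning as in part (i) of Lemma \ref{lemma Tikhonov}. This guarantees that $B_n$, $K_n$, and every product written above make sense, after which the algebra closes the proof.
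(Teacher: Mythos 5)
Your proof is correct and takes essentially the same route as the paper: the paper's intermediate identity (\ref{4.19}) is exactly your observation $K_n = B_n A^{H}_{n} R^{-1}$ in disguise, and both arguments then reduce to multiplying the relation $B^{-1}_{n} = B^{-1}_{n-1} + A^{H}_{n}R^{-1}A_{n}$ by $B_n$ on the left and $B_{n-1}$ on the right. Your extra paragraph justifying the bounded invertibility of each $B_n$ is a point the paper leaves implicit, but it does not alter the substance of the argument.
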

\begin{proof}
By multiplying (\ref{4.16}) by $\left( B^{-1}_{n-1} + A^{H}_{n}R^{-1}A_{n} \right)$ from the left hand side, and by $A_n$ from right hand side, we have
\begin{equation}
\left( B^{-1}_{n-1} + A^{H}_{n}R^{-1}A_{n} \right)K_{n}A_{n} = A^{H}_{n}R^{-1} A_{n}, \label{4.19}
\end{equation}
which implies that by using (\ref{4.17})
\begin{eqnarray}
B^{-1}_{n}\left(I-K_{n}A_{n}\right)&=& \left( B^{-1}_{n-1} + A^{H}_{n}R^{-1}A_{n}\right)\left(I-K_{n}A_{n}\right)
\nonumber\\
&=&\left( B^{-1}_{n-1} + A^{H}_{n}R^{-1}A_{n}\right) - A^{H}_{n}R^{-1} A_{n}
\nonumber\\
&=& B^{-1}_{n-1}. \label{4.20}
\end{eqnarray}
Multiplying (\ref{4.20}) by $B_{n}$ from the left hand side, and by $B_{n-1}$ from the right hand side, we finally get (\ref{4.18}). 
\end{proof}
We summarize the update formula in the following.
\begin{equation}
\varphi^{KF}_{n}:= \varphi^{KF}_{n-1} + K_{n}\left( f_{n}-A_{n} \varphi^{KF}_{n-1} \right), \label{4.21}
\end{equation}
\begin{equation}
K_{n}:= B_{n-1} A^{H}_{n}\left(R + A_{n} B_{n-1} A^{H}_{n} \right)^{-1}, \label{4.22}
\end{equation}
\begin{equation}
B_{n}:= \left(I - K_{n} A^{H}_{n} \right)B_{n-1}, \label{4.23}
\end{equation}
for $n=1,...,N$, where $\varphi^{KF}_{0}:=\varphi_0$ and $B_{0}:=\frac{1}{\alpha}I$. We call this the {\it Kalman filter}.
\par
We observe the above algorithm. 
It means that we can estimate the state $\varphi$ every time $n$ to observe one measurement $f_{n}$ without waiting all measurements $\{ f_{n} \}_{n=1}^{N}$. 
It includes not only the update (\ref{4.21}) of the state $\varphi$, but also the update (\ref{4.23}) of the weight $B$ of the norm, which plays the role of keeping the information of the previous state. 
In finite dimensional setting, the weight $B$ is also interpreted as the covariance matrices of the state error distribution from statistical viewpoint (see Section \ref{Stochastic viewpoints of Kalman filter}).
\par
Finally in this section, we show the equivalence of Full data Tikhonov and Kalman filter when all observation operators $A_n$ are linear.
\begin{thm} \label{equivalence}
For measurements $f_1,...,f_N$, linear operators $A_1,...,A_N$, and the initial guess $\varphi_0 \in X$, the final sate of the Kalman filter given by (\ref{4.21})--(\ref{4.23}) is equivalent to the state of the Full data Tikhonov given by (\ref{3.8}), that is
\begin{equation}
\varphi^{KF}_{N}=\varphi^{FT}_{N}. \label{4.24}
\end{equation}
\end{thm}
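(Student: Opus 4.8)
The plan is to argue at the level of the functionals rather than by manipulating the closed-form solutions (\ref{3.8}) and (\ref{4.21}). The central observation is that Lemma \ref{lemma 4.1} is merely the $n=1$ instance of a completing-the-square identity that holds at every step: the ``full'' quadratic functional accumulated through the $m$-th measurement coincides, up to an additive constant, with a single Tikhonov-type functional centred at the Kalman iterate $\varphi_m$ and weighted by $B_m^{-1}$. Minimizers are insensitive to additive constants, so the equivalence of final states will follow once this recursive identity is established for $m=N$.

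First I would set up the induction. For $0 \le m \le N$, define
\[
G_m(\varphi) := \left\| \varphi - \varphi_0 \right\|^{2}_{X, B_0^{-1}} + \sum_{n=1}^{m} \left\| f_n - A_n\varphi \right\|^{2}_{Y, R^{-1}},
\]
so that $G_N = J_{Full, N}$ while $G_0(\varphi) = \left\| \varphi - \varphi_0 \right\|^{2}_{X, B_0^{-1}}$. The inductive claim is that
\[
G_m(\varphi) = \left\| \varphi - \varphi_m \right\|^{2}_{X, B_m^{-1}} + c_m
\]
for a constant $c_m$ independent of $\varphi$, where $\varphi_m$ and $B_m$ are the Kalman iterates defined in (\ref{4.15})--(\ref{4.17}). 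The base case $m=0$ is immediate.

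For the inductive step I would add the term $\left\| f_m - A_m\varphi \right\|^{2}_{Y, R^{-1}}$ to the inductive hypothesis $G_{m-1}(\varphi) = \left\| \varphi - \varphi_{m-1} \right\|^{2}_{X, B_{m-1}^{-1}} + c_{m-1}$, and then reuse verbatim the calculation proving Lemma \ref{lemma 4.1}, with the indices $0,1$ replaced by $m-1,m$. That calculation rests only on the two defining relations $B_m^{-1} = B_{m-1}^{-1} + A_m^{H} R^{-1} A_m$ and the update $\varphi_m = \varphi_{m-1} + \left( B_{m-1}^{-1} + A_m^{H} R^{-1} A_m \right)^{-1} A_m^{H} R^{-1}\left( f_m - A_m \varphi_{m-1} \right)$, both of which hold at level $m$; hence the identity transfers unchanged, yielding $G_m(\varphi) = \left\| \varphi - \varphi_m \right\|^{2}_{X, B_m^{-1}} + c_m$ and completing the induction.

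Finally, taking $m=N$ gives $J_{Full, N}(\varphi) = \left\| \varphi - \varphi_N \right\|^{2}_{X, B_N^{-1}} + c_N$. Since $B_N^{-1}$ is positive definite, the right-hand side is a strictly convex quadratic with unique minimizer $\varphi = \varphi_N = \varphi^{KF}_N$; the additive constant $c_N$ does not move the minimizer, and $\varphi^{FT}_N$ is by construction the unique minimizer of $J_{Full, N}$, so $\varphi^{FT}_N = \varphi^{KF}_N$. The step I expect to require the most care is the inductive application of Lemma \ref{lemma 4.1} at a general index: one must verify that its proof is genuinely index-agnostic, and in particular that the linearity of $A_m$ is used only through the quadratic dependence of $\left\| f_m - A_m\varphi \right\|^{2}_{Y, R^{-1}}$ on $\varphi$. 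This is exactly where the hypothesis that the $A_n$ are linear enters, and where the argument would break down for the nonlinear problem studied in the forthcoming paper.
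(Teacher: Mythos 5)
Your proposal is correct and follows essentially the same route as the paper: an induction in which the completing-the-square calculation of Lemma \ref{lemma 4.1}, applied with the indices $(0,1)$ replaced by $(m-1,m)$, yields $J_{Full,m}(\varphi)=\left\| \varphi - \varphi^{KF}_{m} \right\|^{2}_{X, B_{m}^{-1}} + c_{m}$, after which the minimizers of the two functionals must coincide. The only cosmetic differences are that you start the induction at $m=0$ with a trivial base case (the paper starts at $m=1$ with Lemma \ref{lemma 4.1} itself), and that you spell out the final step --- strict convexity from the positive definiteness of $B_{N}^{-1}$ and insensitivity of minimizers to additive constants --- which the paper leaves implicit in its ``it is sufficient to show'' reduction.
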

\begin{proof}
It is sufficient to show that
\begin{equation}
J_{Full, N}(\varphi) = \left\| \varphi - \varphi^{KF}_{N} \right\|^{2}_{X, B_{N}^{-1}} + c_N, \label{4.25}
\end{equation}
where $c_N$ is some constant independently of $\varphi$. 
We will prove (\ref{4.25}) by the induction. 
The case of $N=1$ has already been shown in Lemma 4.1. 
\par
We assume that (\ref{4.25}) in the case of $n \in \mathbb{N}$ with $1\leq n\leq N-1$ holds, that is, 
\begin{equation}
J_{Full,n}(\varphi) = \left\| \varphi - \varphi^{KF}_{n} \right\|^{2}_{X, B_{n}^{-1}} + c_{n}, \label{4.26}
\end{equation}
where $c_{n}$ is some constant. 
Then, we have
\begin{eqnarray}
J_{Full, n+1}(\varphi) &=& J_{Full, n}(\varphi) + \left\| f_{n+1} - A_{n+1} \varphi \right\|^{2}_{Y, R^{-1}} 
\nonumber\\
&=&  \left\| \varphi - \varphi^{KF}_{n} \right\|^{2}_{X, B_{n}^{-1}} + \left\| f_{n+1} - A_{n+1} \varphi \right\|^{2}_{Y, R^{-1}} + c_{n}. \label{4.27}
\end{eqnarray}
By the same argument in Lemma \ref{lemma 4.1} replacing $B_0$, $\varphi_0$, $f_1$, $A_1$ by $B_{n}$, $\varphi_{n}$, $f_{n+1}$, $A_{n+1}$, respectively, we have that $J_{Full, n+1}(\varphi)=\left\| \varphi - \varphi^{KF}_{n+1} \right\|^{2}_{X, B_{n+1}^{-1}} + c_{n+1}$. 
Theorem \ref{equivalence} has been shown. 
\end{proof}
\begin{rem}\label{Remark4.2}
If $\vec{f}$ is true measurement, i.e., $\vec{A}\varphi_{true}=\vec{f}$ and $\vec{A}$ is injective, then our Kalman filter solution $\varphi_{N}^{KF}=\varphi_{N}^{FT}$, which is is equal to the Full data Tikhonov solution $\varphi_{N}^{FT}$, convergences to true $\varphi_{true}$ as $\alpha \to 0$ (see (iv) in Lemma \ref{lemma Tikhonov}).
The injectivity of $\vec{A}$ would be expected when the number $N$ of measurement is large enough.
\end{rem}
\section{Stochastic viewpoints of Kalman filter}\label{Stochastic viewpoints of Kalman filter}
In this section, we observe the Kalman filter (\ref{4.21})-(\ref{4.23}) from Bayesian viewpoints. 
For simplicity, we assume that $X=\mathbb{C}^{m}$ and $Y=\mathbb{C}^{l}$, $m, l \in \mathbb{N}$, and we treat the state $\bm{\varphi} \in \mathbb{C}^{m}$ and the measurement $\bm{f}  \in \mathbb{C}^{l}$ as complex random vectors.
We recall that Bayes' theorem 
\begin{equation}
p(\bm{\varphi}|\bm{f})\propto p(\bm{f}|\bm{\varphi})p(\bm{\varphi})
\end{equation}
where $p(\bm{\varphi})$ is the prior probability of the state $\bm{\varphi}$ before the measurement $f$ which is modeled by information of the current state $\bm{\varphi}$, $p(\bm{f}|\bm{\varphi})$ is the probability of observing $f$ given $\bm{\varphi}$ which is called the likelihood, and $p(\bm{\varphi}|\bm{f})$ is the posterior probability of the state $\bm{\varphi}$ given the measurement $\bm{f}$. 
Bayesian theory is a simple and generic approach which can be applied to inverse and ill-posed problems (see e.g., \cite{ arridge2019solving, calvetti2007introduction, Freitag,  NakamuraPotthast, stuart2010inverse}). 
\par
Here, we recall that complex Gaussian distribution (see e.g., \cite{gallager2008circularly, picinbono1996second, van1995multivariate}). 
Let us first remind that a complex random variable $\bm{z}$ of $\mathbb{C}^{n}$ is a pair
of real random variable of $\mathbb{R}^{n}$ such that $\bm{z} = \bm{x} + i\bm{y}$. 
A complex random variable $\bm{z}$ is said to be Gaussian if its real and imaginary parts $\bm{x}$ and $\bm{y}$ are jointly Gaussian. 
Its distribution with zero mean is 
\begin{equation}
p(\bm{z})=p(\bm{x}, \bm{y})= \frac{1}{\sqrt{(2\pi)^{2n} |\Sigma_{2n}|}} e^{-\frac{1}{2} \bm{v}^{T} \Sigma^{-1}_{2n} \bm{v} }, \label{Complex Gaussian distribution}
\end{equation}
where $\bm{v} \in \mathbb{R}^{2n}$ such that  $\bm{v^{T}}=(\bm{x^{T}}, \bm{y^{T}})$, and $\bm{T}$ means transposition, and $\bm{\Sigma_{2n}} \in \mathbb{R}^{2n \times 2n}$ is the covariance matrix defined by
\begin{equation}
\bm{\Sigma_{2n}} = \left(
    \begin{array}{cc}
    E(\bm{x}\bm{x^{T}}) & E(\bm{x}\bm{y^{T}}) \\
    E(\bm{y}\bm{x^{T}}) & E(\bm{y}\bm{y^{T}})
    \end{array}
  \right).
\end{equation}
If real and imaginary parts are independent Gaussian distributed random variables with mean zero and same covariance $\bm{\Sigma} \in \mathbb{R}^{n \times n}$, then (\ref{Complex Gaussian distribution}) can be computed as
\begin{equation}
p(\bm{z})= \frac{1}{(\pi)^{n} |\bm{\Sigma}|} e^{-\bm{z^{H}} \bm{\Sigma}^{-1} \bm{z} }.
\end{equation}
where $\bm{H}$ means transposition and complex conjugation. 
This distribution is referred to as {\it circularly-symmetric (central) complex Gaussian distribution}, and is denoted by $\mathcal{C}\mathcal{N}(0, \Sigma)$. 
\par
We assume that complex vector $\bm{\varphi^{KF}_{n-1}} \in \mathbb{C}^{m}$ and the positive definite matrix $\bm{B_{n-1}} \in \mathbb{R}^{m \times m}$ are determined in some way, and the prior $p(\bm{\varphi})$ is modeled by a circularly-symmetric complex Gaussian distribution $\mathcal{C}\mathcal{N}(\bm{\varphi^{KF}_{n-1}}, \bm{B_{n-1}})$, that is, 
\begin{equation}
p(\bm{\varphi})= \frac{1}{(\pi)^{m} |\bm{B_{n-1}}|} e^{-(\bm{\varphi} - \bm{\varphi^{KF}_{n-1}})^{\bm{H}} \bm{B_{n-1}}^{-1} (\bm{\varphi} - \bm{\varphi^{KF}_{n-1}}) }.
\end{equation}
Furthermore, we assume that the observation error $\bm{f} - \bm{A}\bm{\varphi}$ is distributed from $\mathcal{C}\mathcal{N}(0, \bm{R})$ where the $\bm{R} \in \mathbb{R}^{l \times l}$ is some positive definite matrix. 
Then, the likelihood $p(\bm{f}=\bm{f_{n}}|\bm{\varphi})$ is modeled by
\begin{equation}
p(\bm{f}=\bm{f_{n}}|\bm{\varphi}) = \frac{1}{(\pi)^{l} |R|} e^{-(\bm{f_{n}} - \bm{A_{n}} \bm{\varphi})^{\bm{H}} R^{-1} (\bm{f_{n}} - \bm{A_{n}} \bm{\varphi}) }
\end{equation}
Then by Bayes' theorem, the posterior $p(\varphi|\bm{f}=\bm{f_{n}})$ can be computed as
\begin{eqnarray}
p(\bm{\varphi}|\bm{f}=\bm{f_{n}}) &\propto& e^{ -\{ (\bm{\varphi} - \bm{\varphi^{KF}_{n-1}})^{\bm{H}} \bm{B_{n-1}}^{-1} (\bm{\varphi} - \bm{\varphi^{KF}_{n-1}}) + (\bm{f_{n}} - \bm{A_{n}} \bm{\varphi})^{\bm{H}} \bm{R}^{-1} (\bm{f_{n}} - \bm{A_{n}} \bm{\varphi}) \} }
\nonumber\\
&\propto& e^{- (\bm{\varphi} - \bm{\varphi^{KF}_{n}})^{H}\bm{B_{n-1}}^{-1}(\bm{\varphi} - \bm{\varphi^{KF}_{n}})}
\end{eqnarray}
where $\bm{\varphi^{KF}_{n}}$ and $\bm{B_{n}}$ is defined by (\ref{4.21}) and (\ref{4.23}), respectively.
This computation is guaranteed by the same argument in Section \ref{Kalman Filter Section}.
Therefore, posterior distribution is a circularly-symmetric complex Gaussian distribution $\mathcal{C}\mathcal{N}(\bm{\varphi^{KF}_{n}}, \bm{B_{n}})$ with mean $\bm{\varphi^{KF}_{n}} \in \mathbb{C}^{m}$ and covariance matrix $\bm{B_{n}} \in \mathbb{R}^{m \times m}$, which means that the Kalman filter update in (\ref{4.21})-(\ref{4.23}) can be interpreted as updating mean and covariance matrix of Gaussian distribution of the state in the case that the prior and likelihood are assumed to be Gaussian.
\section{Numerical examples}\label{Numerical examples}
In this section, we provide numerical examples for the Kalman filter algorithm. 
Our inverse scattering problem is to solve the linear integral equation
\begin{equation}
\mathcal{F}_{B, n}q=u^{\infty}_{B}(\cdot, \theta_n), \label{5.1}
\end{equation}
for $n=1,...,N$ where the operator $\mathcal{F}_{B, n}:L^{2}(Q) \to L^{\infty}(\mathbb{S}^{1})$ is defined by 
\begin{equation}
\mathcal{F}_{B, n}q(\hat{x}):=\mathcal{F}_{B}q(\hat{x},\theta_n)=\frac{k^2}{4\pi}\int_{Q}\mathrm{e}^{ik (\theta_{n} - \hat{x}) \cdot y} q(y)dy, \label{5.2}
\end{equation}
where the incident direction is given by $\theta_n:=\left(\mathrm{cos}(2\pi n/N), \mathrm{sin}(2\pi n/N) \right)$ for each $n=1,...,N$.
We assume that the support $Q$ of the function $q$ is included in the square $[-S, S]^2$ with some $S>0$. 
\par
The linear integral equation (\ref{5.1}) is discretized by
\begin{equation}
\bm{\mathcal{F}_{n}} \bm{q} = \bm{u^{ \infty}_{n}},
\end{equation}
where
\begin{equation}
\bm{\mathcal{F}_{n}} = \frac{k^2 S^{2}}{4\pi M^{2}} \left(
\mathrm{e}^{ik( \theta_{n}-\hat{x}_j) \cdot y_{i,l}} \right)_{j=1,...,J, \ -M \leq i,l \leq M-1} \in \mathbb{C}^{J\times (2M)^2}. \label{5.5}
\end{equation} 
where $y_{i,l}:=\left( \frac{(2i+1)S}{2M}, \frac{(2l+1)S}{2M} \right)$, and $M \in \mathbb{N}$ is a number of the division of $[0,S]$ (i.e., the function $q$ is discretized by piecewise constant on $[-S, S]^{2}$ which is decomposed by squares with the length $\frac{S}{M}$), and $\hat{x}_j:=\left(\mathrm{cos}(2\pi j/J),  \mathrm{sin}(2\pi j/J) \right)$, and $J \in \mathbb{N}$ is a number of the division of $[0,2\pi]$ and 
\begin{equation}
\bm{q} = \left( q(y_{i,l})  \right)_{-M \leq i,l \leq M-1} \in \mathbb{C}^{(2M)^2}, \label{5.4}
\end{equation} 
and
\begin{equation}
\bm{u^{ \infty}_{n}} = \left( u^{\infty}_{B}(\hat{x}_j, \theta_{n}) \right)_{j=1,...,J} +\bm{\epsilon_{n}} \in \mathbb{C}^{J}. \label{5.3}
\end{equation}
The noise $\bm{\epsilon_{n}} \in \mathbb{C}^{J}$ is sampling from a complex Gaussian distribution $\mathcal{C}\mathcal{N}(0, \sigma^{2}\bm{I})$, which is equivalent to $\bm{\epsilon_{n}}=\bm{\epsilon^{re}_{n}}+i\bm{\epsilon^{im}_{n}}$ where $\bm{\epsilon^{re}_{n}}, \bm{\epsilon^{im}_{n}} \in \mathbb{R}^{J}$ are independently identically distributed from Gaussian distribution $\mathcal{N}(0, \sigma^{2}\bm{I})$ with mean zero and covariance matrix $\sigma^{2}\bm{I}$ where $\sigma>0$.
\par
Here, we always fix discretization parameters as $J=30$, $M=8$, $S=3$, and weight $\bm{R} \in \mathbb{R}^{J \times J}$, which is the covariance matrix of the observation error distribution, as $R=r^{2}I$, and $r=1$. 
From Remarks \ref{Remark4.2} and \ref{Remark for Tikhonov}, in order to converge to true solution, the matrix $\vec{\bm{\mathcal{F}}}:=\left(
\begin{array}{cc}
\bm{\mathcal{F}_{1}} \\
\vdots \\
\bm{\mathcal{F}_{N}}
\end{array}
\right) \in \mathbb{C}^{NJ \times (2M)^{2}}$ should be injective. The necessary condition is $JN>(2M)^{2}$, so we choose the parameter $N=30$ ($NJ=30\times30=900>$ $(2M)^2=256$).
\par
We consider true functions as the characteristic function
\begin{equation}
q^{true}_{j}(x):=\left\{ \begin{array}{ll}
1 & \quad \mbox{for $x \in B_j$}  \\
0 & \quad \mbox{for $x \notin B_j$}
\end{array} \right., \label{5.6}
\end{equation}
where the support $B_j$ of the true function is considered as the following two types.
\begin{equation}
B_1:=\left\{(x_1, x_2) : x^{2}_{1}+x^{2}_{2} <1.5   \right\}, \label{5.7}
\end{equation}
\begin{equation}
B_2:=\left\{(x_1, x_2):\begin{array}{cc}
      (x_{1}+1.5)^2+(x_{2}+1.5)^{2} < (1.0)^{2}\ or \\
      1 < x_1 < 2,\ -2 < x_2 < 2\ or \\
      -2 < x_1 < 2,\ -2.0 < x_2 < -1.0 
    \end{array}
\right\}. \label{5.8}
\end{equation}
In Figure \ref{truefuctions}, the blue closed curve is the boundary $\partial B_j$ of the support $B_j$, and the green brightness indicates the value of the true function on each cell divided into $(2M)^2=256$ in the sampling domain $[-S, S]^2=[-3, 3]^2$.
Here, we always employ the initial guess $q_0$ as
\begin{equation}
q_0\equiv0. \label{5.9}
\end{equation}
\par
Figure \ref{comparisonKFFT} shows the reconstruction by the Kalman filter (KF) and the Full data Tikhonov (FT) discussed in (\ref{4.21})--(\ref{4.23}) and (\ref{3.8}), respectively.
The first and second column correspond to visualization of the updated state $q$ in the case when four measurements $\{u^{\infty}_{B}(\cdot, \theta_n)\}_{n=1}^{4}$ and twenty measurements $\{u^{\infty}_{B}(\cdot, \theta_n)\}_{n=1}^{20}$ are given, respectively, for different methods KF and FT, and for two different shapes $B_1$ and $B_2$. 
In Figure \ref{comparisonKFFT}, the wave number and the regularization parameter are fixed as $k=3$ and $\alpha=1$, respectively, and the measurements are noisy free.
The third column corresponds to the graph of the Mean Square Error (MSE) defined by
\begin{equation}
e_{n}:=\left\|\bm{q^{true}}-\bm{q_{n}} \right\|^2, \label{5.10}
\end{equation}
where $\bm{q_{n}}$ is associated with the updated state given $n$ measurements. 
The horizontal axis is with respect to number of given measurements, and the vertical axis is the value of MSE. 
We observe that in Figure \ref{comparisonKFFT}, KF and FT are equivalent, which coincides with the theoretical result in Theorem \ref{equivalence}.
\par
Figures \ref{KFreconstruction} and \ref{KFreconstructionnosiy} show the reconstruction by the Kalman filter (KF) with $\sigma=0.1, 0.5$, respectively, for two different wave numbers $k=5$ and $k=1$ and two different shape $B_1$ and $B_2$. 
The first and second columns correspond to visualization of the final state given full measurements ($n=30$) for different regularization parameters $\alpha=10$ and $1e-1$, respectively. 
The third column corresponds to graphs of MSE, which have three evaluations with respect to $\alpha=10, 1, 1e-1$. 
The case of $k=1.0$ fails to reconstruct even with small noise (see Figures \ref{KFreconstruction}), that is, the state does not converge to zero even with increasing the number of measurements and decreasing regularization parameters.
This ill-posedness is because the rank of the full far field mapping $\vec{\bm{\mathcal{F}}}=\left(
\begin{array}{cc}
\bm{\mathcal{F}_{1}} \\
\vdots \\
\bm{\mathcal{F}_{N}}
\end{array}
\right) \in \mathbb{C}^{NJ \times (2M)^{2}}$ ($NJ=30\times30=900$, $(2M)^2=256$) degenerates when the wave number $k$ decreases.
Figure \ref{rank} shows its degeneracy.
The horizontal axis is with respect to wave numbers, and the vertical axis is the number of the rank of full far field mappings $\vec{\bm{\mathcal{F}}}$ (Maximum of rank is $256$).
\section*{Acknowledgments}
This work of the first author was supported by Grant-in-Aid for JSPS Fellows (No.21J00119), Japan Society for the Promotion of Science.

\bibliographystyle{plain}
\bibliography{KF.bib}

\begin{figure}[h]
  \begin{minipage}[b]{0.5\linewidth}
  \centering
  \includegraphics[keepaspectratio, scale=0.5]{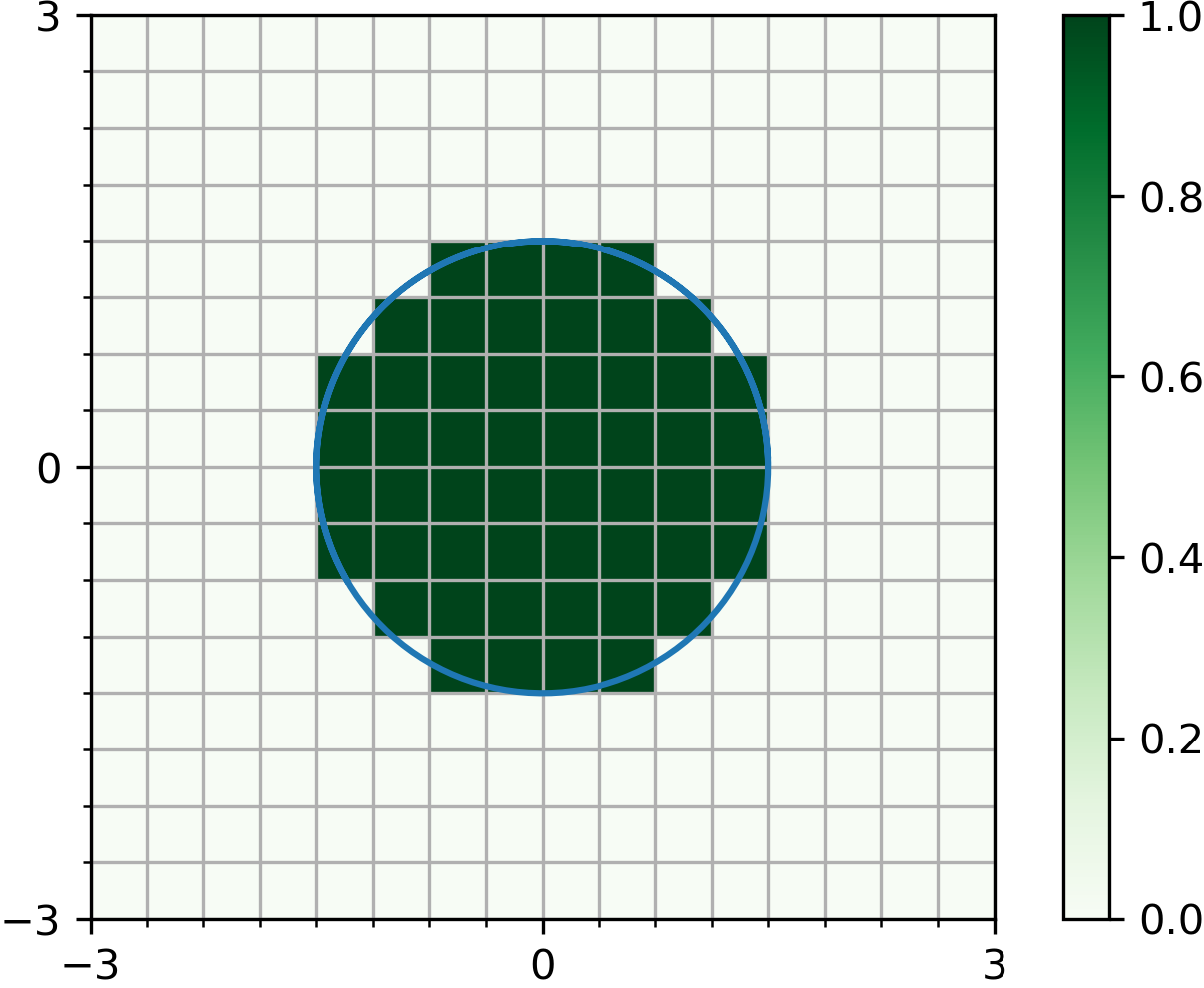}
  \subcaption{$q^{true}_{1}$}
 \end{minipage}
 \begin{minipage}[b]{0.5\linewidth}
  \centering
  \includegraphics[keepaspectratio, scale=0.5]
  {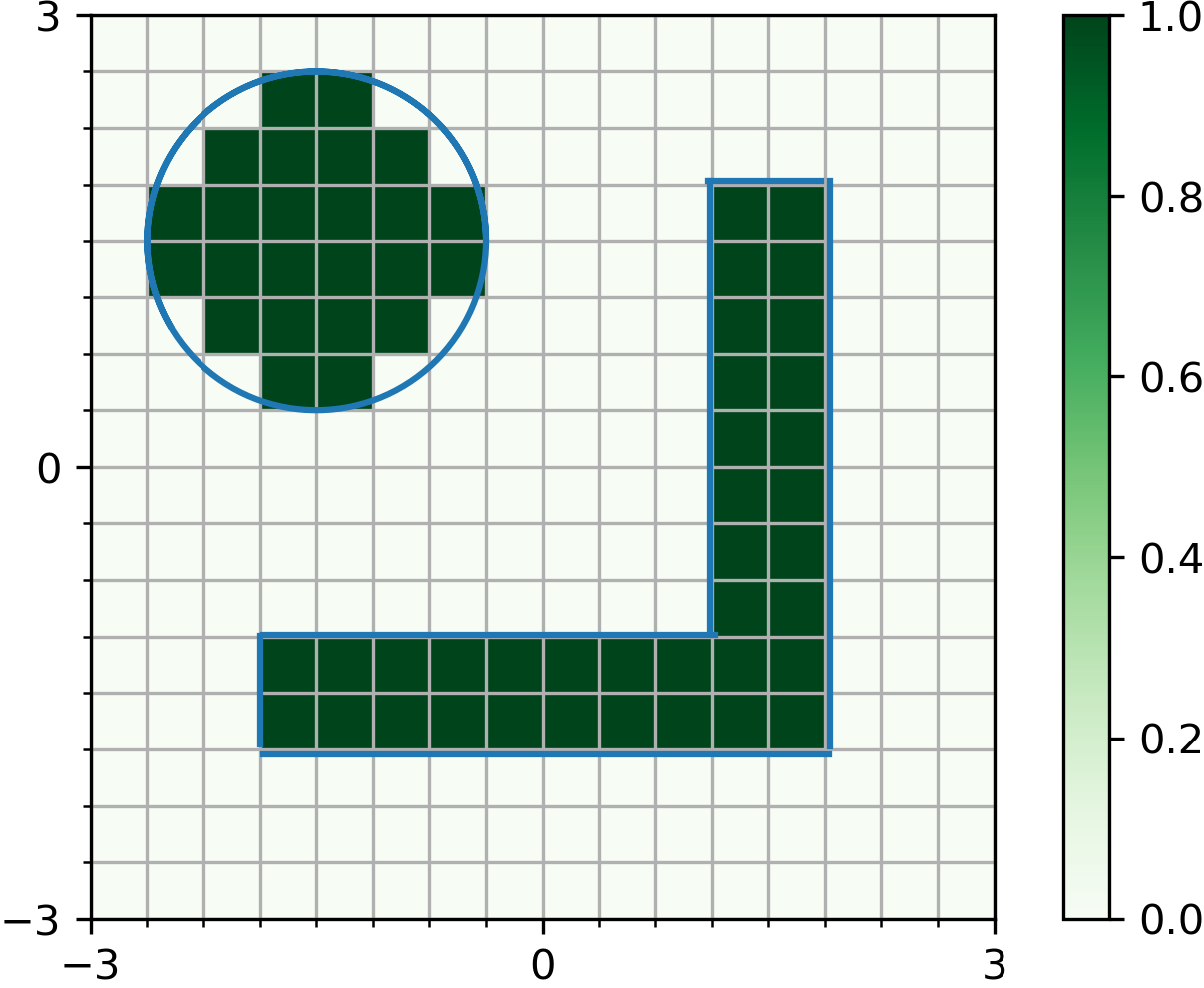}
  \subcaption{$q^{true}_{2}$}
 \end{minipage}
 
 \caption{true functions}
\label{truefuctions}
\end{figure}
\begin{figure}[h]
\begin{tabular}{c}
\hspace{-2.5cm}
 \begin{minipage}[b]{0.4\linewidth}
  \centering
  \includegraphics[keepaspectratio, scale=0.45]
  {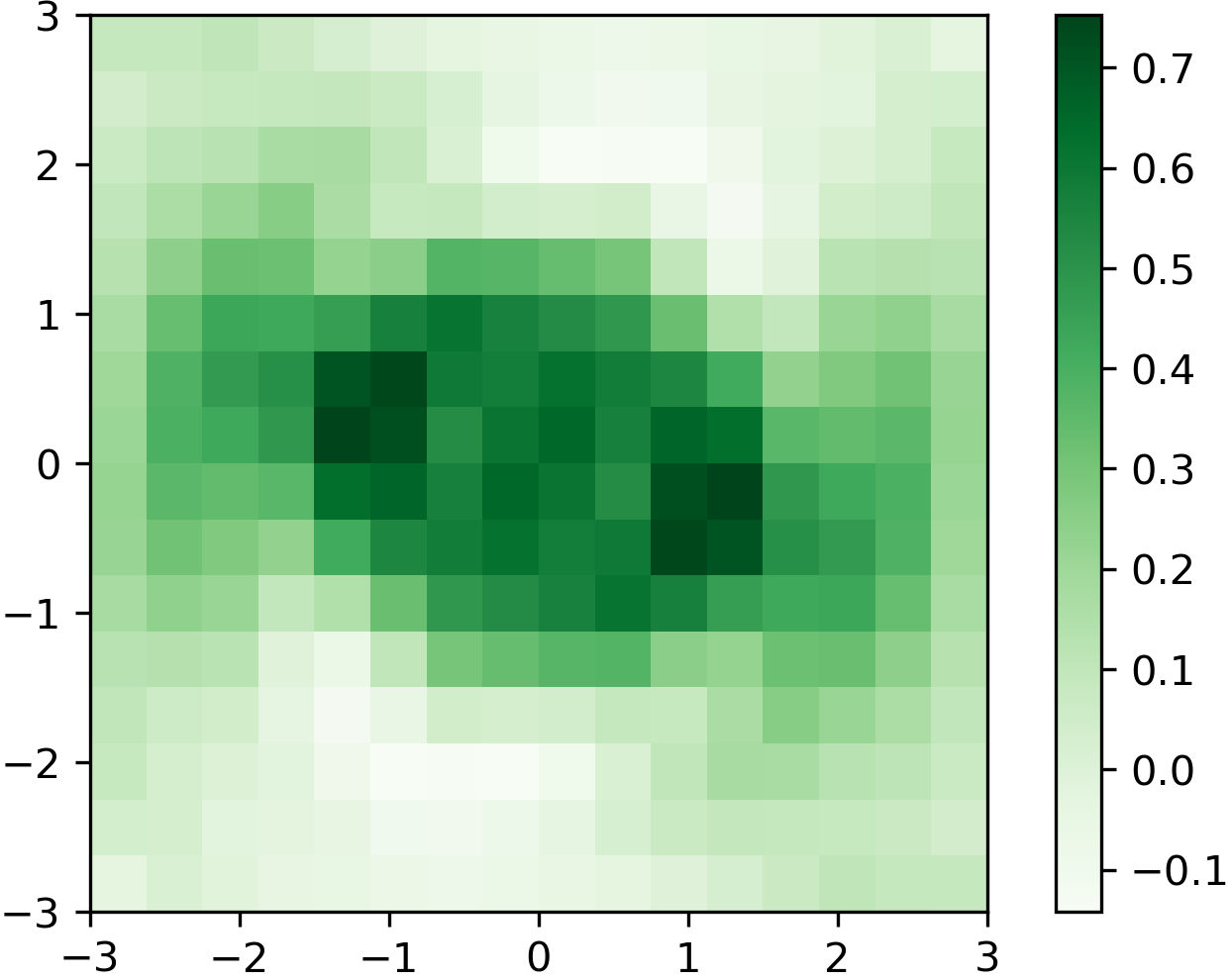}
  \subcaption{KF, $B_1$, $n=4$}
 \end{minipage}
 \begin{minipage}[b]{0.4\linewidth}
  \centering
  \includegraphics[keepaspectratio, scale=0.45]
  {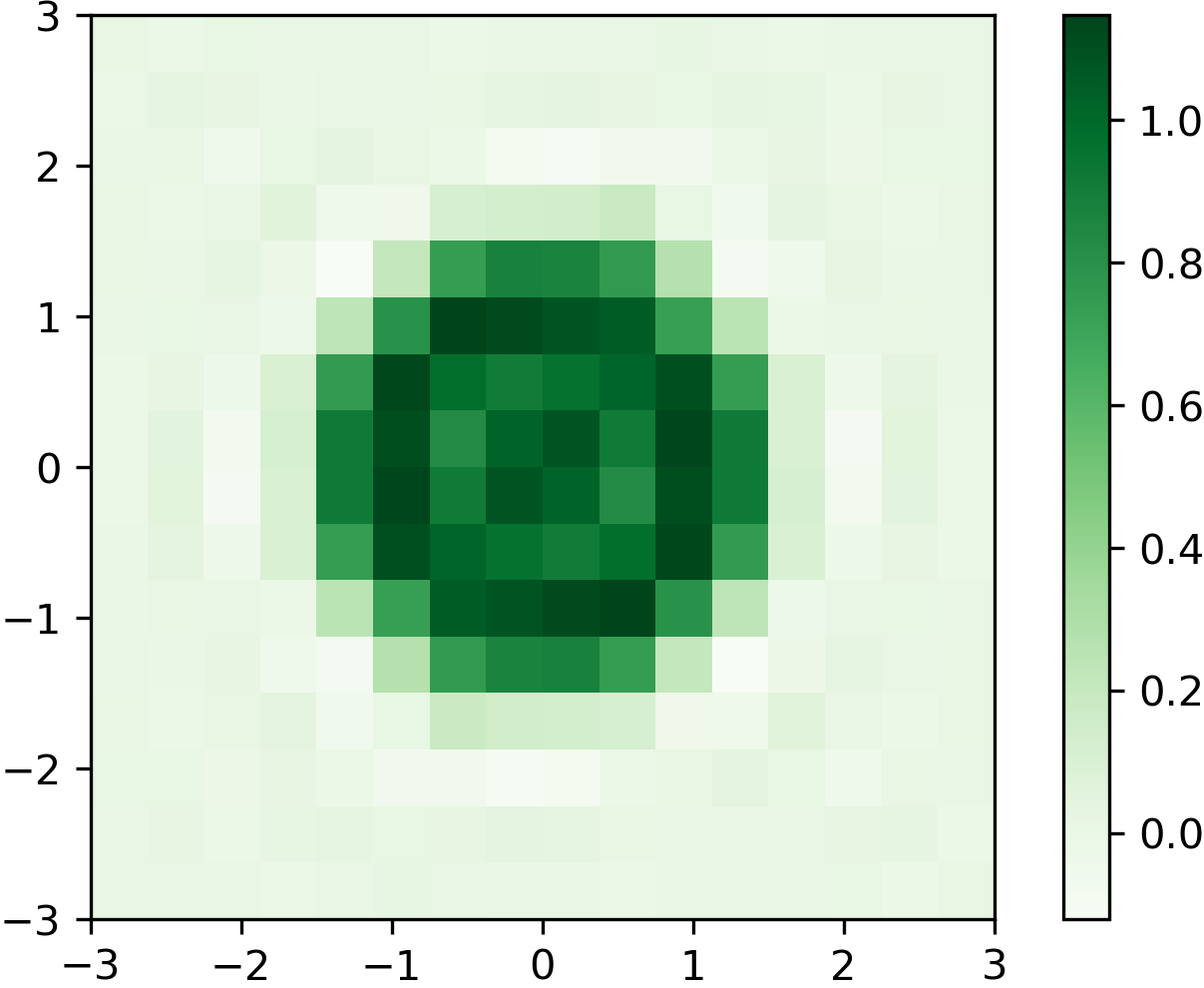}
  \subcaption{KF, $B_1$, $n=20$}
 \end{minipage}
 \begin{minipage}[b]{0.4\linewidth}
  \centering
  \includegraphics[keepaspectratio, scale=0.45]
  {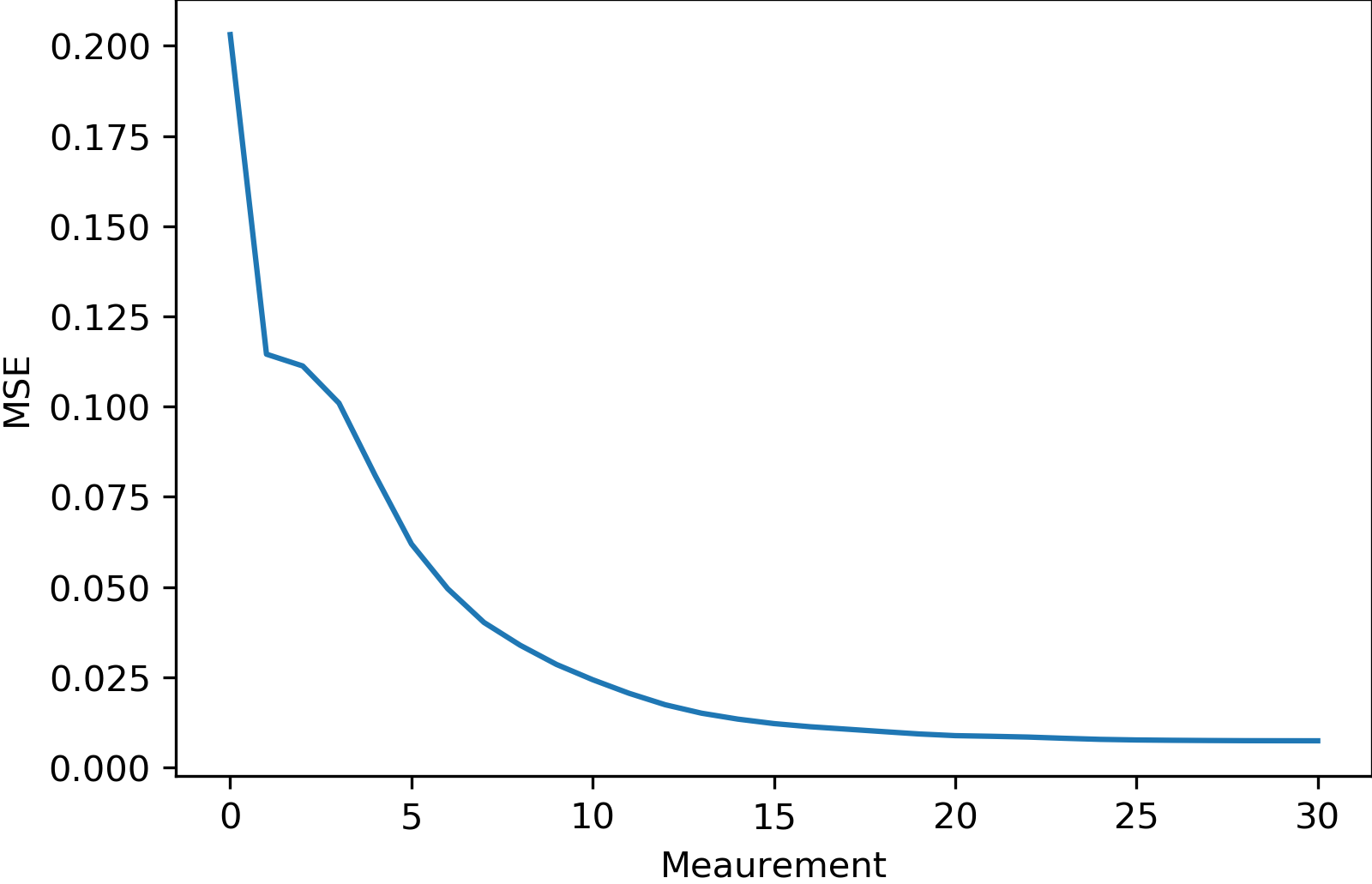}
  \subcaption{KF, $B_1$, error graph}
 \end{minipage}
\end{tabular}

\begin{tabular}{c}
\hspace{-2.5cm}
 \begin{minipage}[b]{0.4\linewidth}
  \centering
  \includegraphics[keepaspectratio, scale=0.45]
  {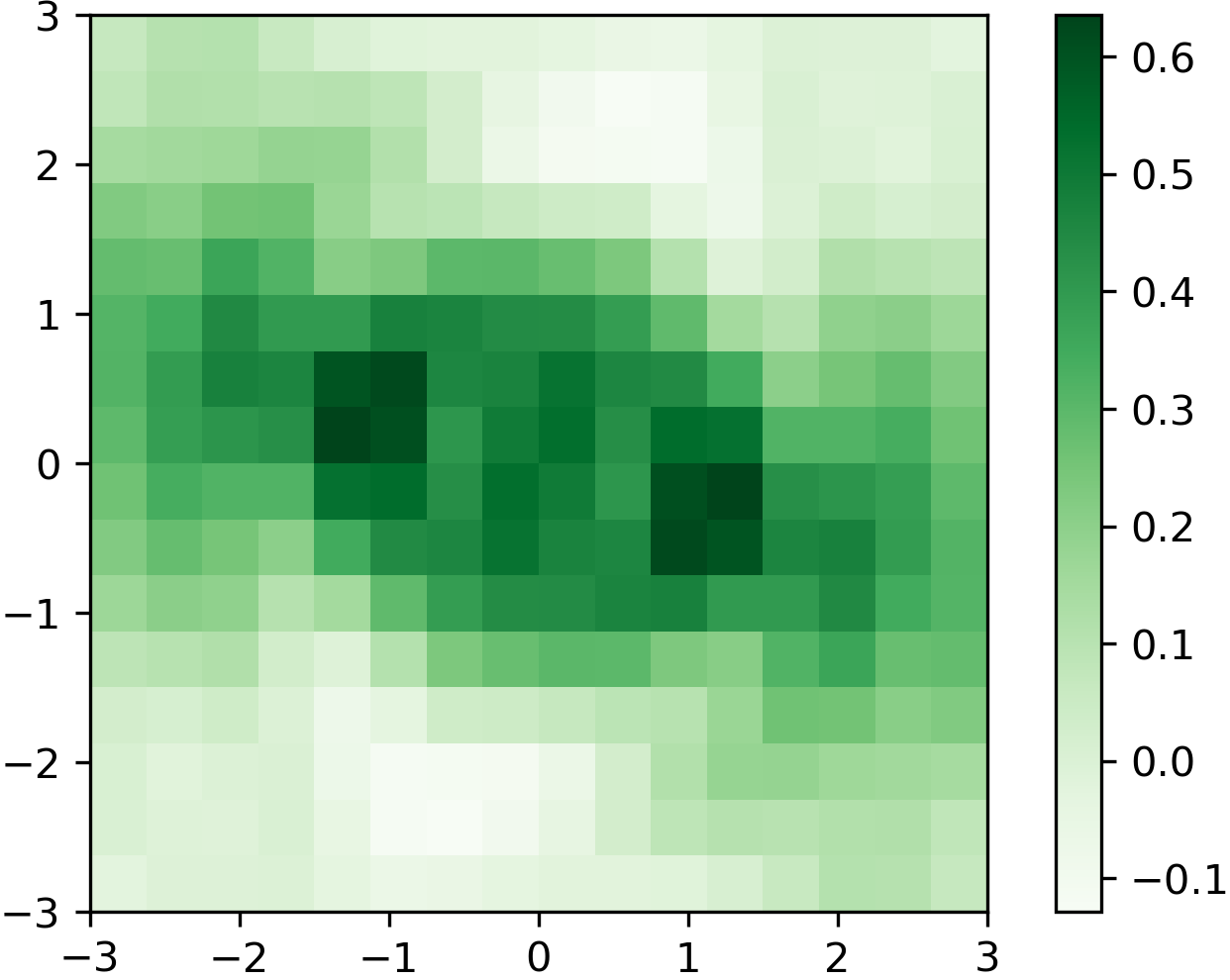}
  \subcaption{FT, $B_1$, $n=4$}
 \end{minipage}
 \begin{minipage}[b]{0.4\linewidth}
  \centering
  \includegraphics[keepaspectratio, scale=0.45]
  {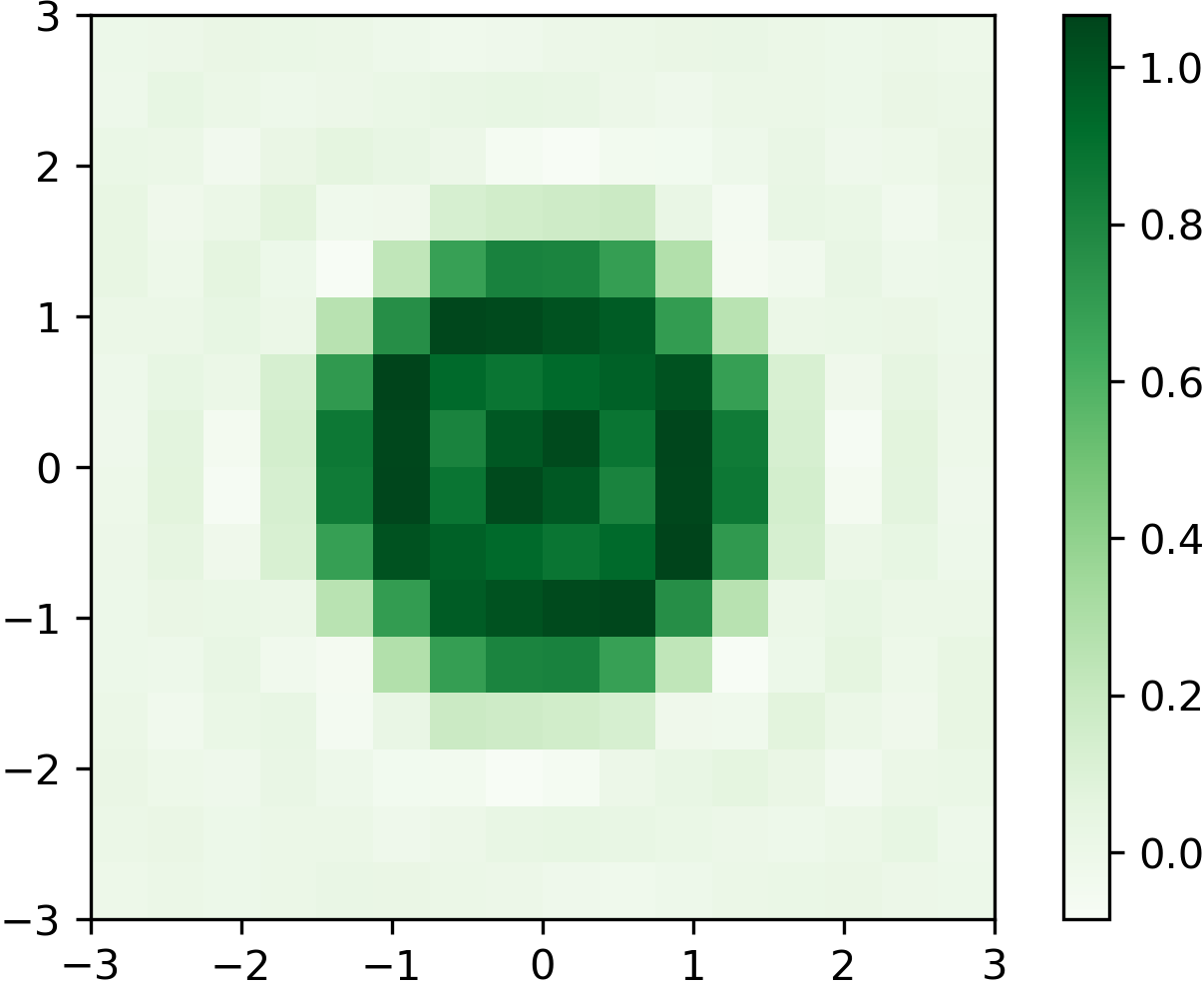}
  \subcaption{FT, $B_1$, $n=20$}
 \end{minipage}
 \begin{minipage}[b]{0.4\linewidth}
  \centering
  \includegraphics[keepaspectratio, scale=0.45]
  {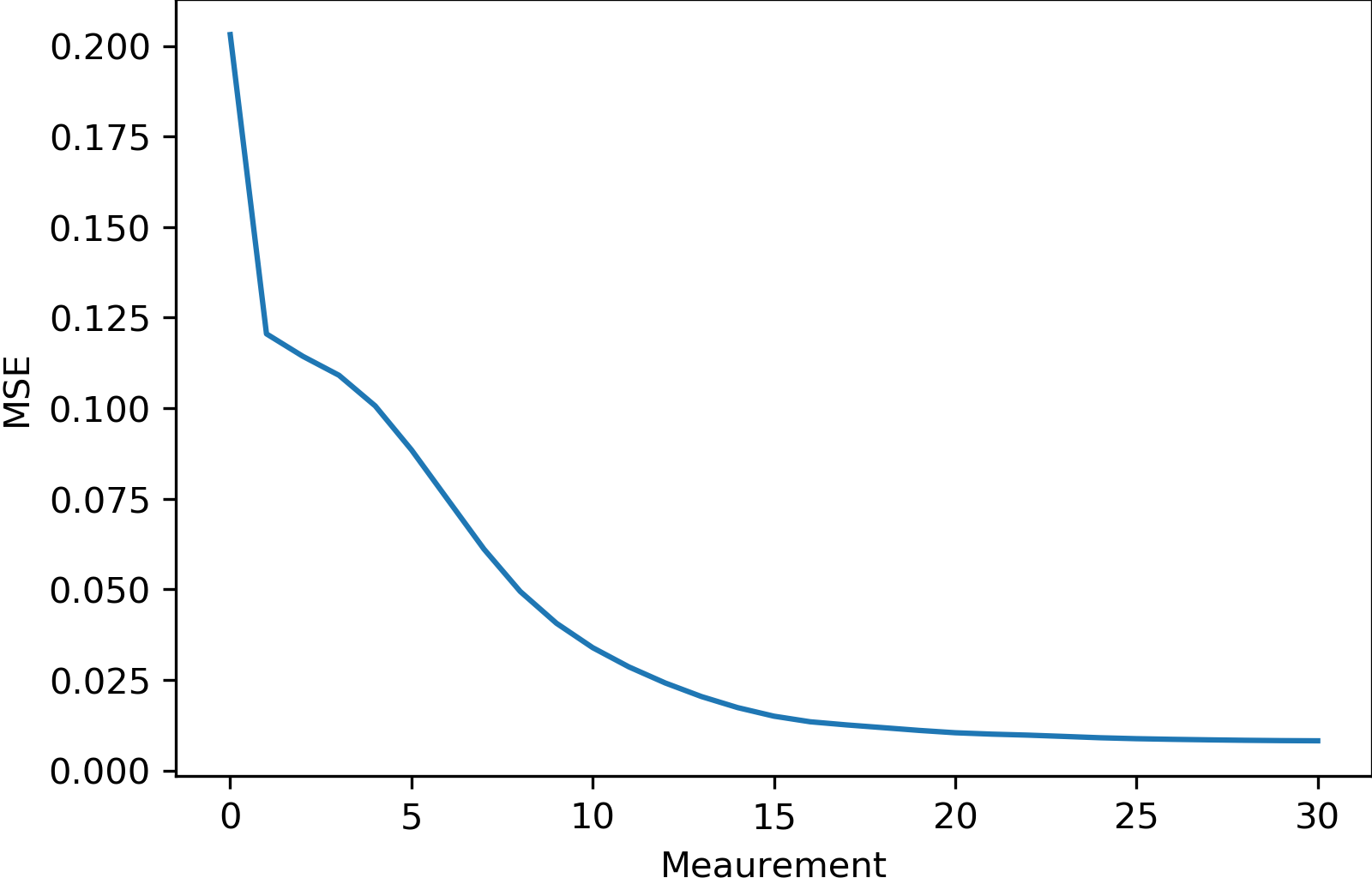}
  \subcaption{FT, $B_1$, error graph}
 \end{minipage}
\end{tabular}

\begin{tabular}{c}
\hspace{-2.5cm}
 \begin{minipage}[b]{0.4\linewidth}
  \centering
  \includegraphics[keepaspectratio, scale=0.45]
  {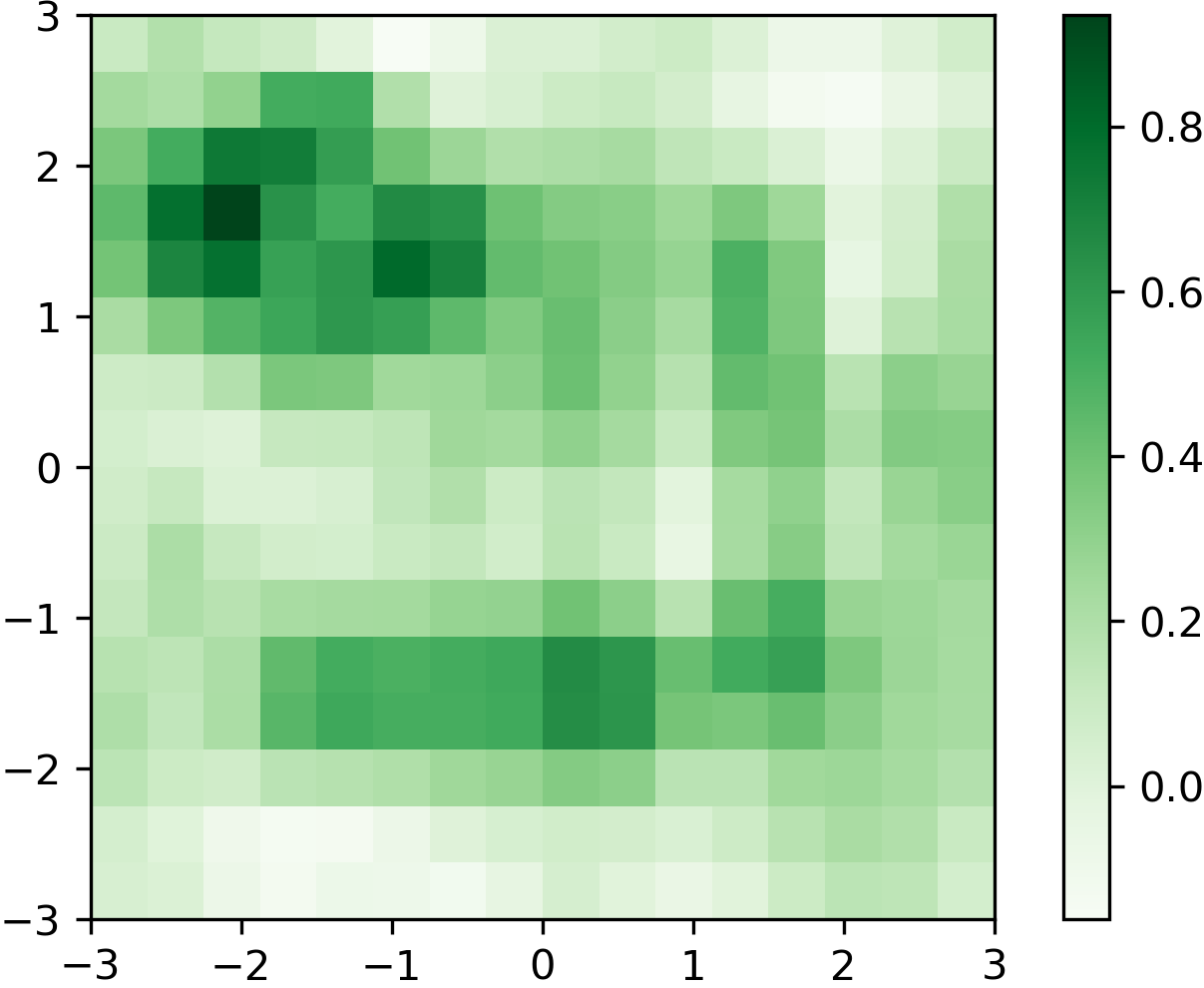}
  \subcaption{KF, $B_2$, $n=4$}
 \end{minipage}
 \begin{minipage}[b]{0.4\linewidth}
  \centering
  \includegraphics[keepaspectratio, scale=0.45]
  {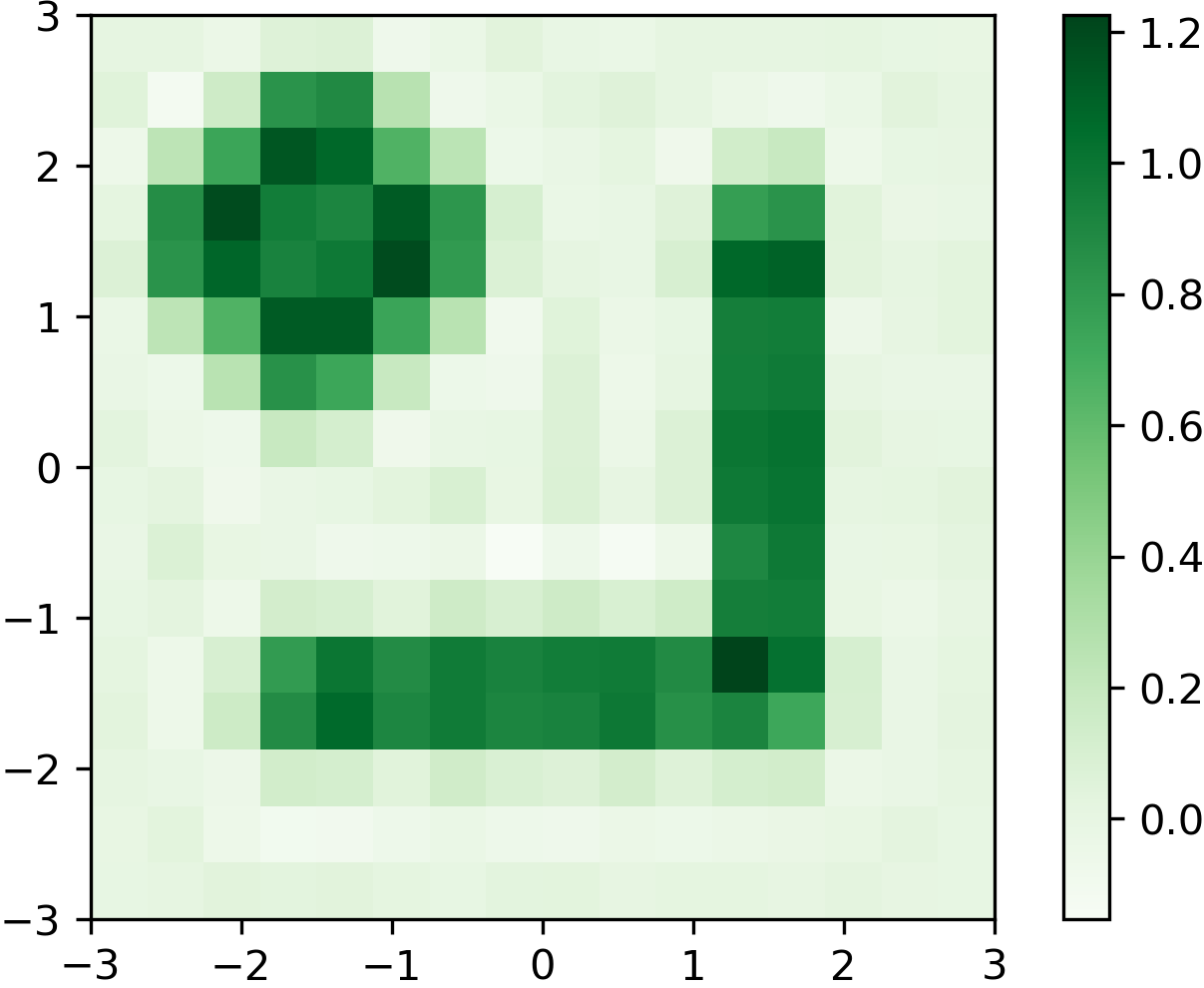}
  \subcaption{KF, $B_2$, $n=20$}
 \end{minipage}
 \begin{minipage}[b]{0.4\linewidth}
  \centering
  \includegraphics[keepaspectratio, scale=0.45]
  {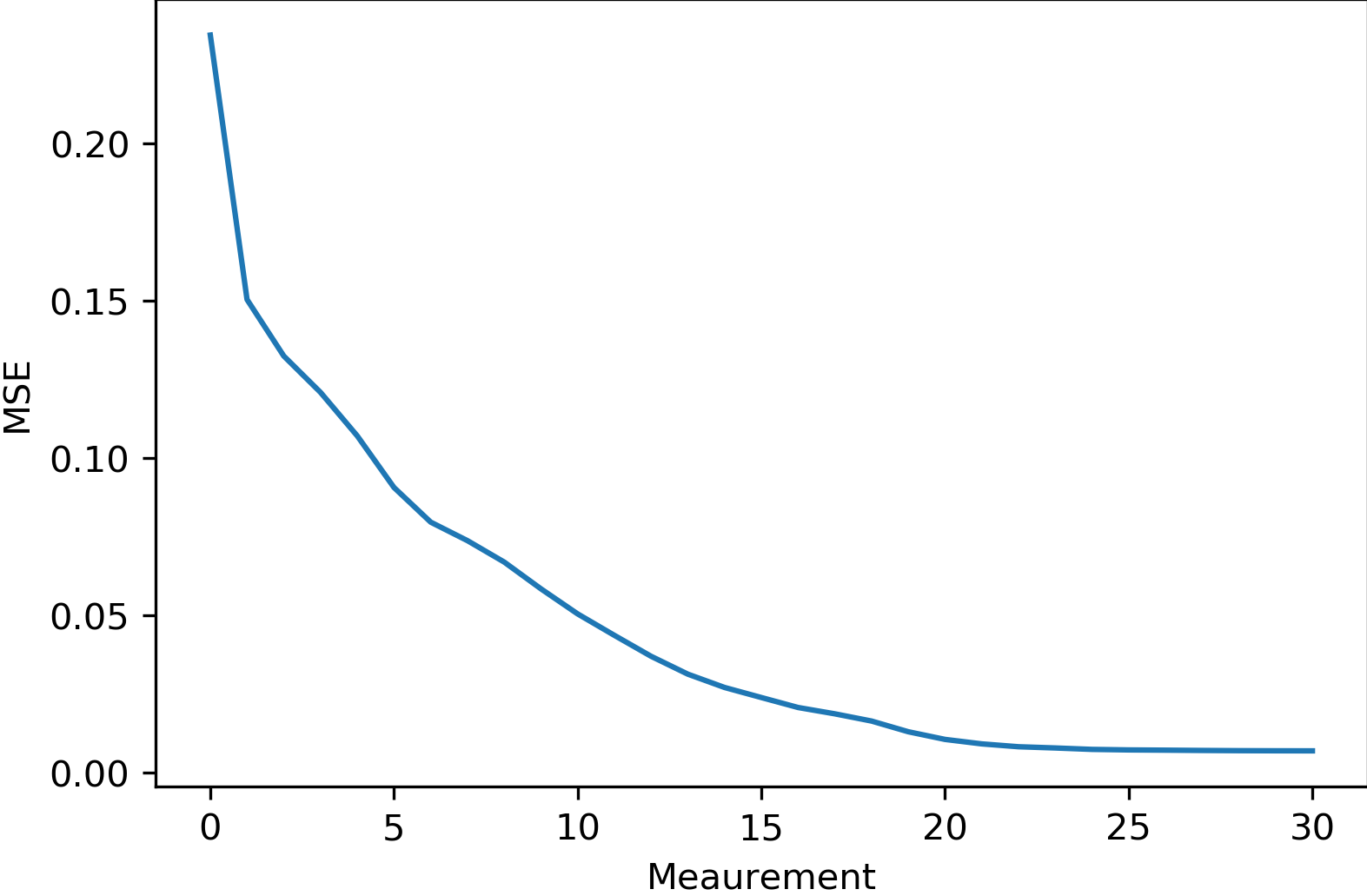}
  \subcaption{KF, $B_2$, error graph}
 \end{minipage}
\end{tabular}
\begin{tabular}{c}
\hspace{-2.5cm}
 \begin{minipage}[b]{0.4\linewidth}
  \centering
  \includegraphics[keepaspectratio, scale=0.45]
  {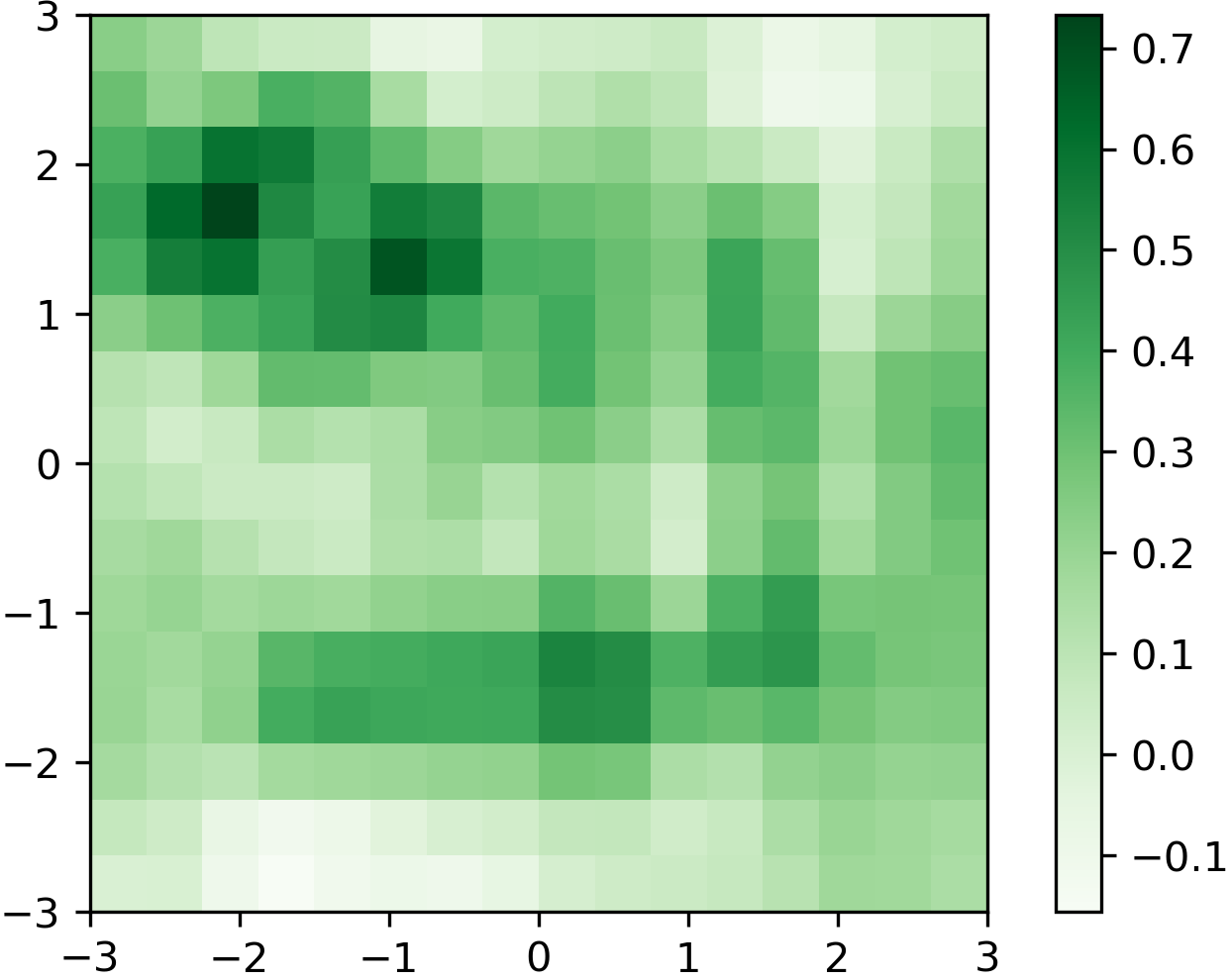}
  \subcaption{FT, $B_2$, $n=4$}
 \end{minipage}
 \begin{minipage}[b]{0.4\linewidth}
  \centering
  \includegraphics[keepaspectratio, scale=0.45]
  {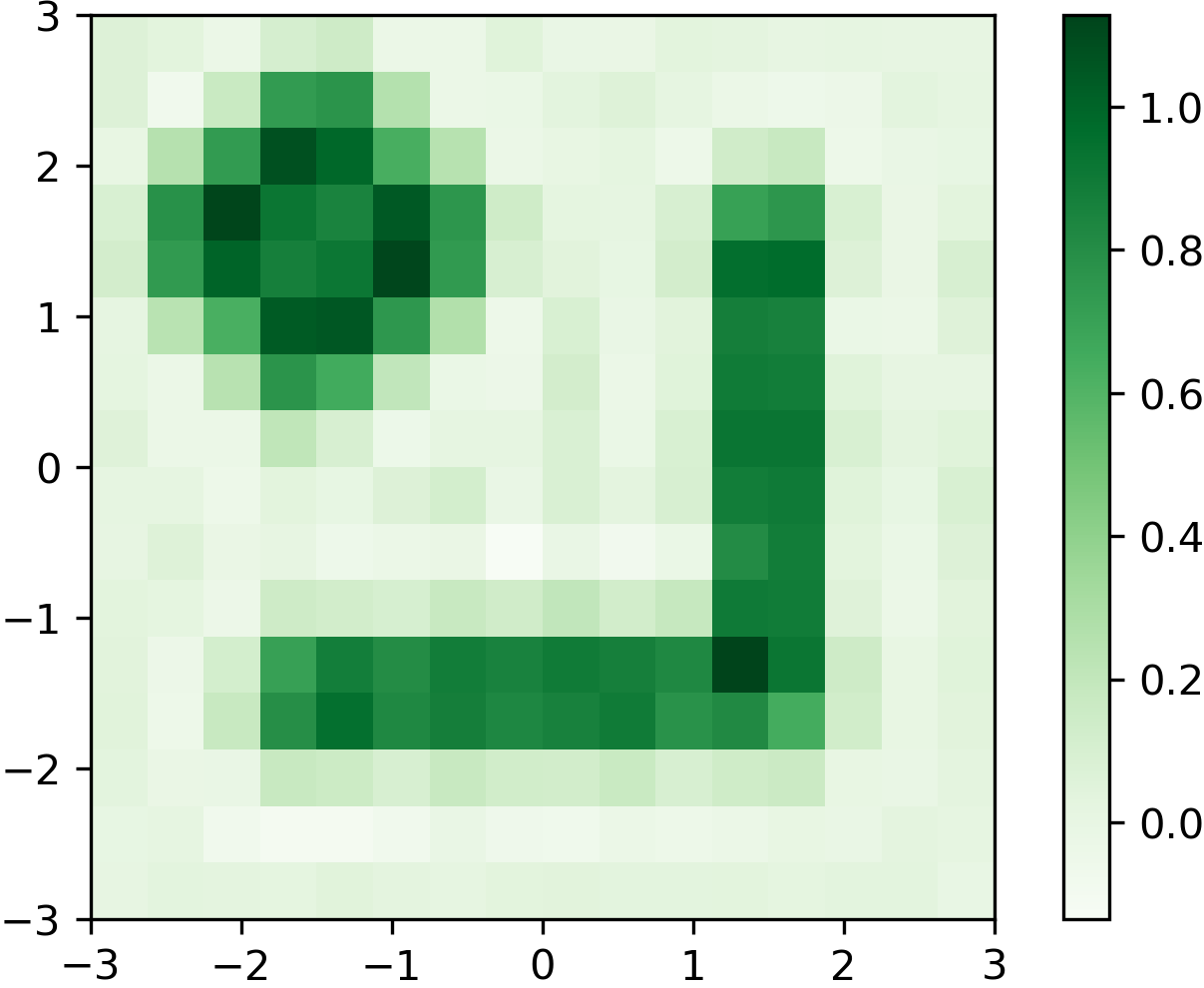}
  \subcaption{FT, $B_2$, $n=20$}
 \end{minipage}
 \begin{minipage}[b]{0.4\linewidth}
  \centering
  \includegraphics[keepaspectratio, scale=0.45]
  {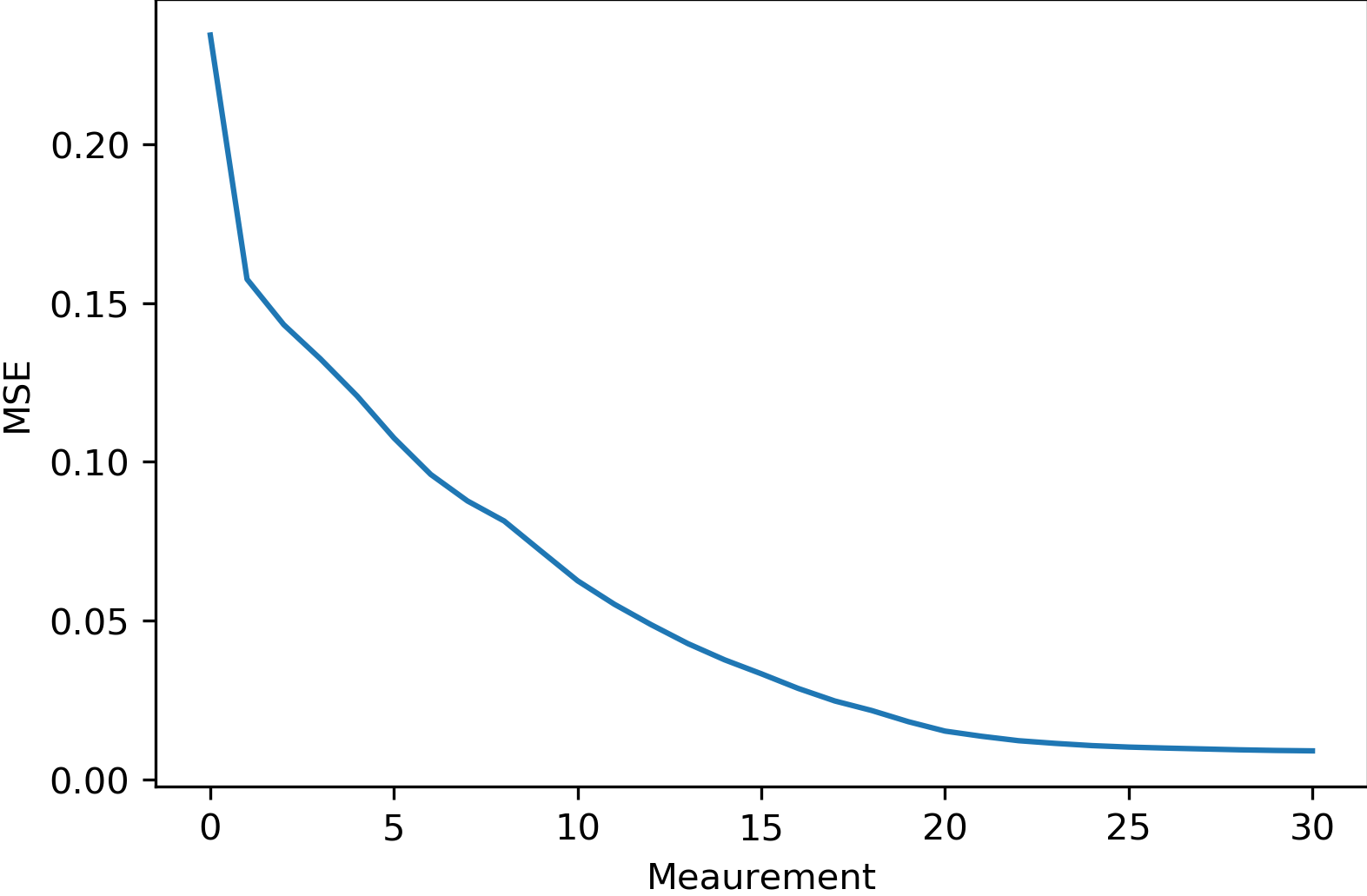}
  \subcaption{FT, $B_2$, error graph}
 \end{minipage}
\end{tabular}
\caption{the comparison of KF and FT, $k=3$, $\alpha=1$ (true data)}
\label{comparisonKFFT}
\end{figure}
\begin{figure}[h]
\begin{tabular}{c}
\hspace{-2.5cm}
 \begin{minipage}[b]{0.4\linewidth}
  \centering
  \includegraphics[keepaspectratio, scale=0.45]
  {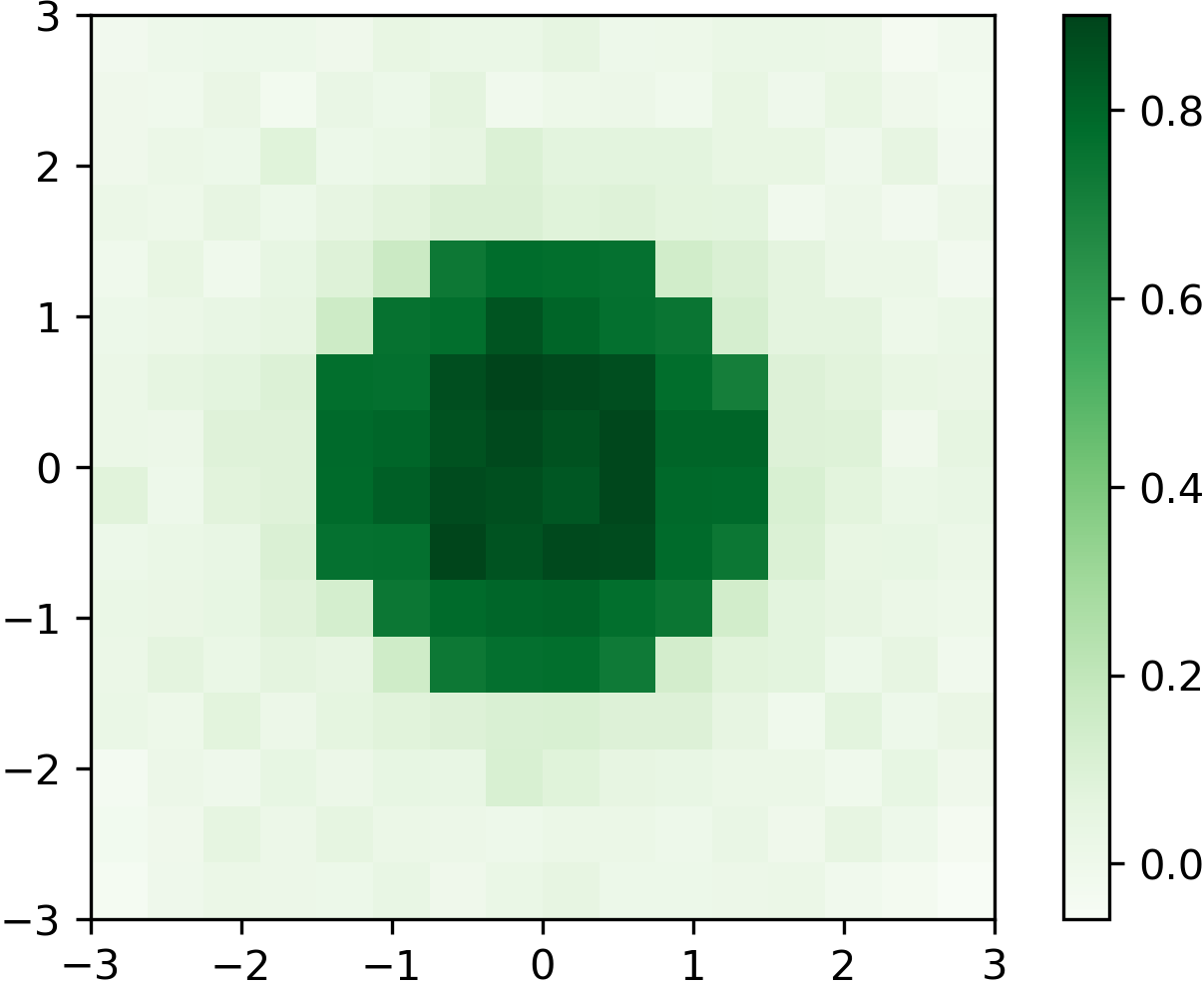}
  \subcaption{$B_1$, $k=5$, $\alpha=10$, $n=30$}
 \end{minipage}
 \begin{minipage}[b]{0.4\linewidth}
  \centering
  \includegraphics[keepaspectratio, scale=0.45]
  {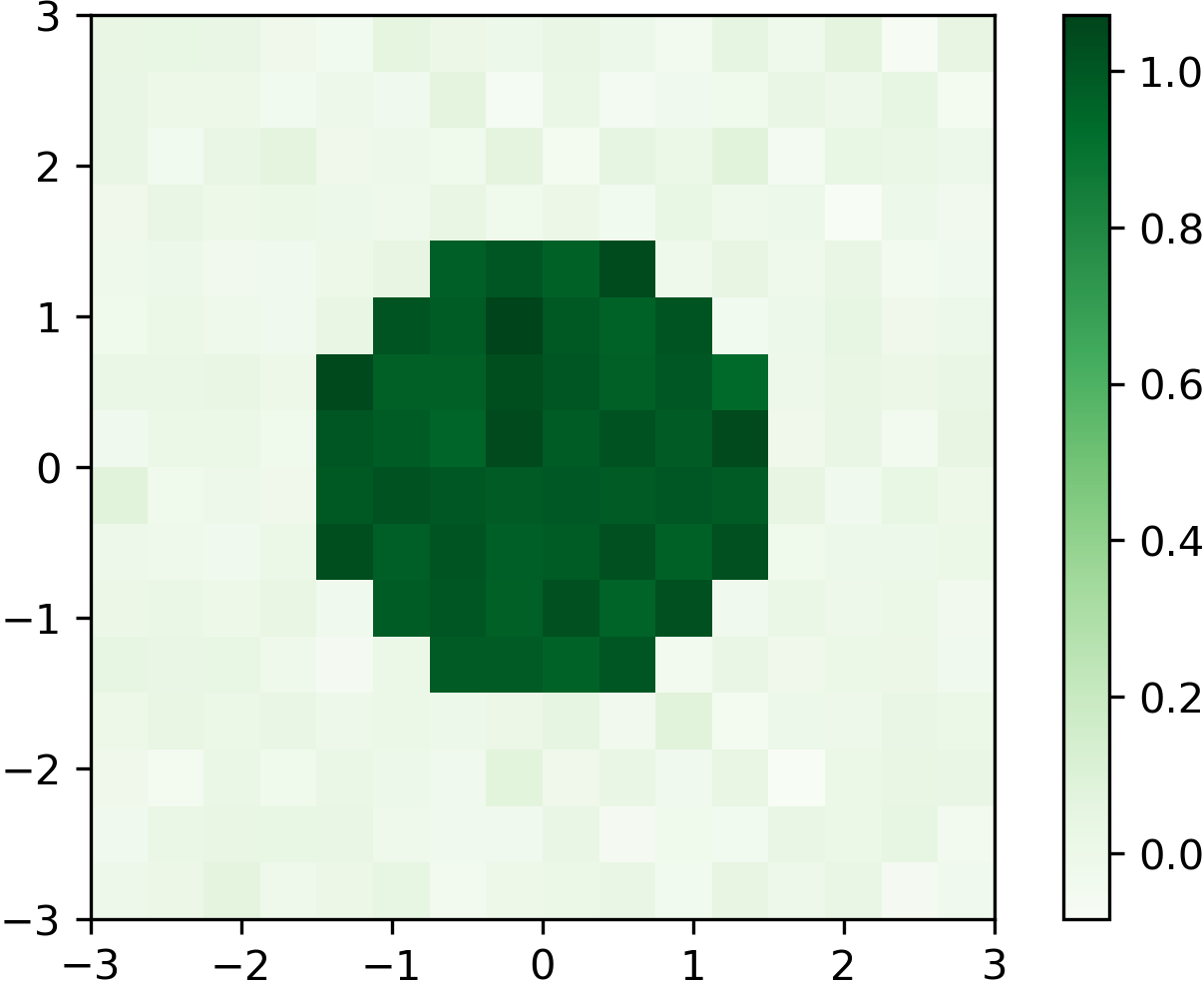}
  \subcaption{$B_1$, $k=5$, $\alpha=1\mathrm{e}-1$, $n=30$}
 \end{minipage}
 \begin{minipage}[b]{0.4\linewidth}
  \centering
  \includegraphics[keepaspectratio, scale=0.45]
  {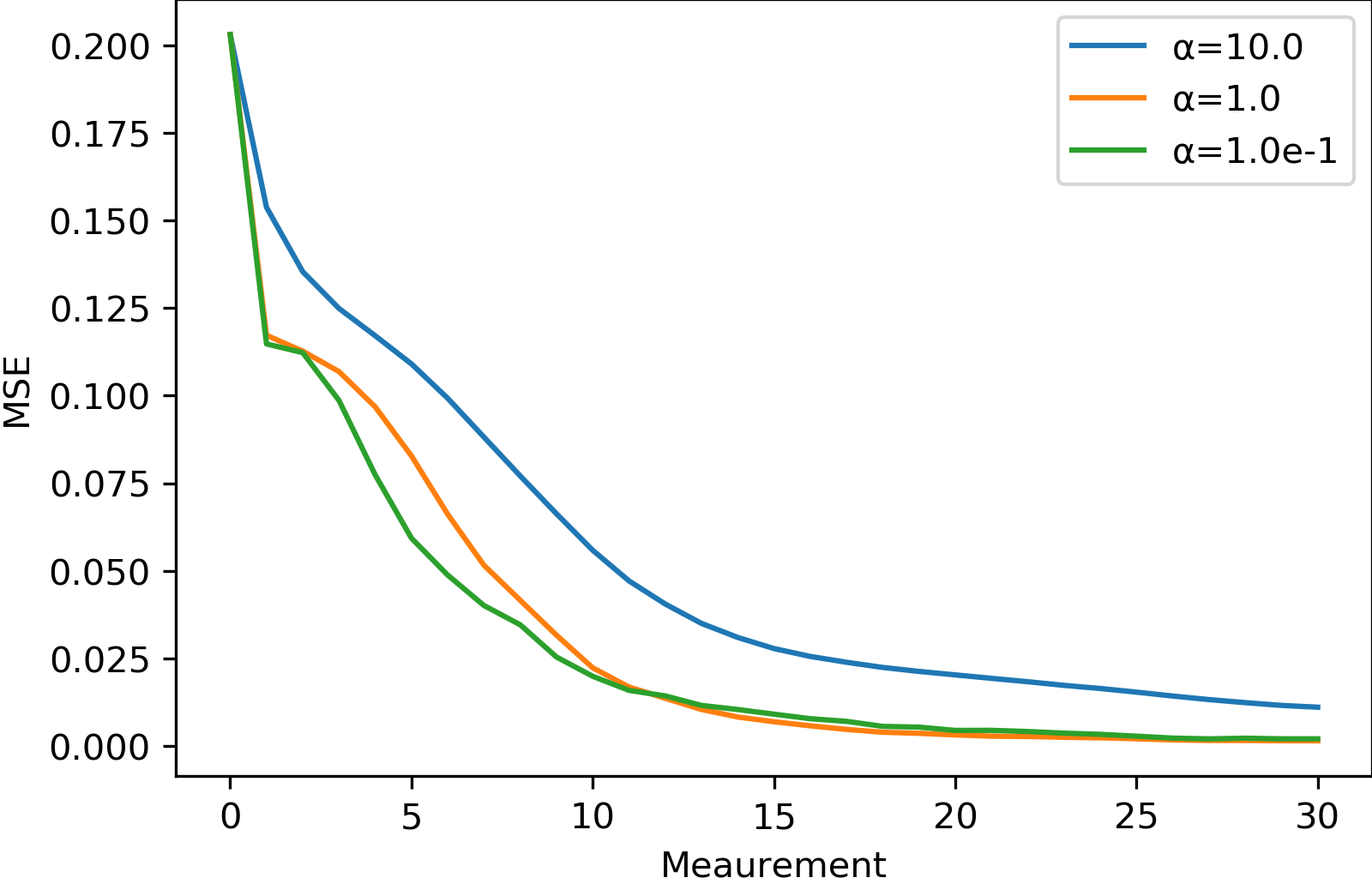}
  \subcaption{$B_1$, $k=5$, error graph}
 \end{minipage}
\end{tabular}
\begin{tabular}{c}
\hspace{-2.5cm}
 \begin{minipage}[b]{0.4\linewidth}
  \centering
  \includegraphics[keepaspectratio, scale=0.45]
  {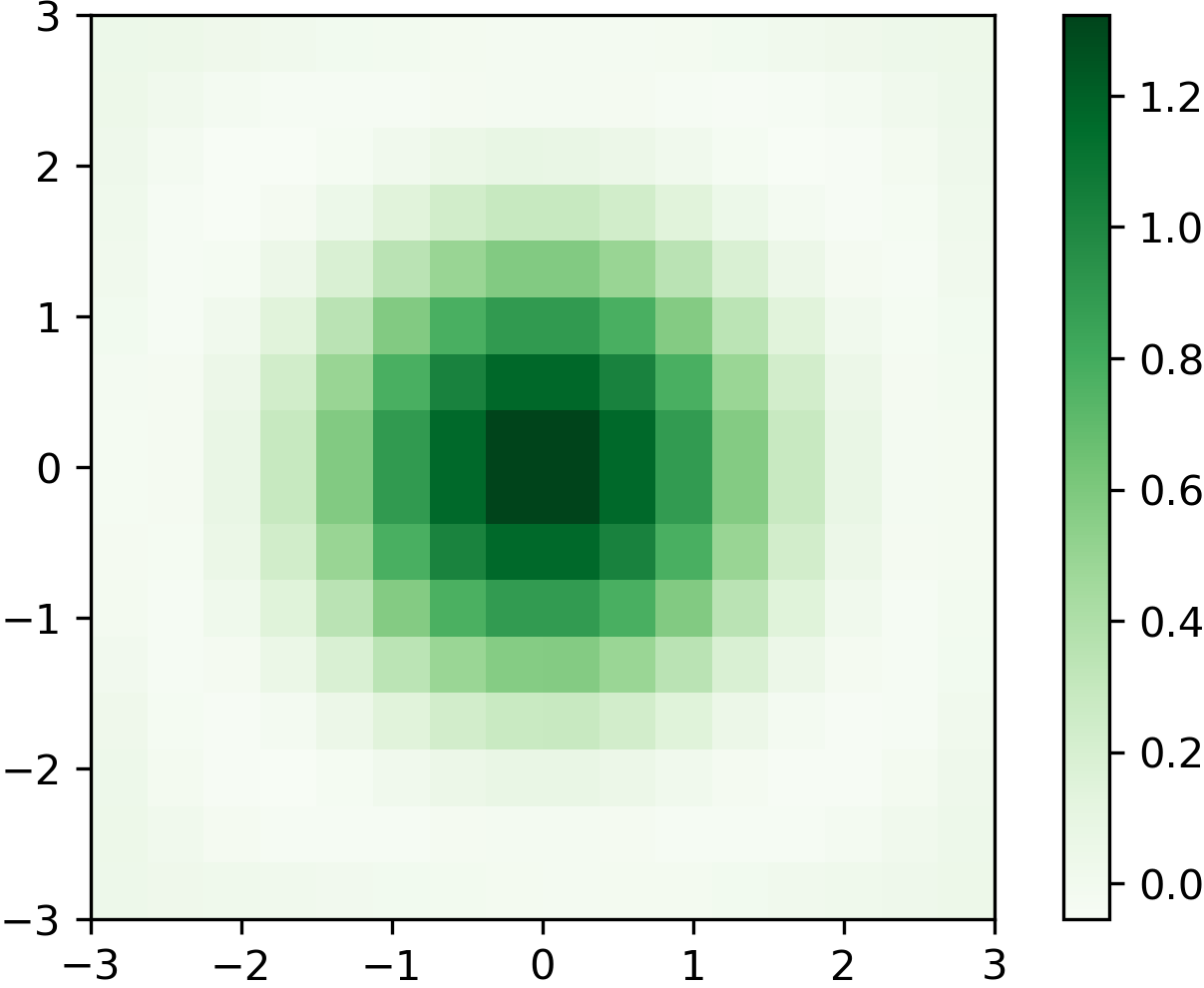}
  \subcaption{$B_1$, $k=1$, $\alpha=10$, $n=30$}
 \end{minipage}
 \begin{minipage}[b]{0.4\linewidth}
  \centering
  \includegraphics[keepaspectratio, scale=0.45]
  {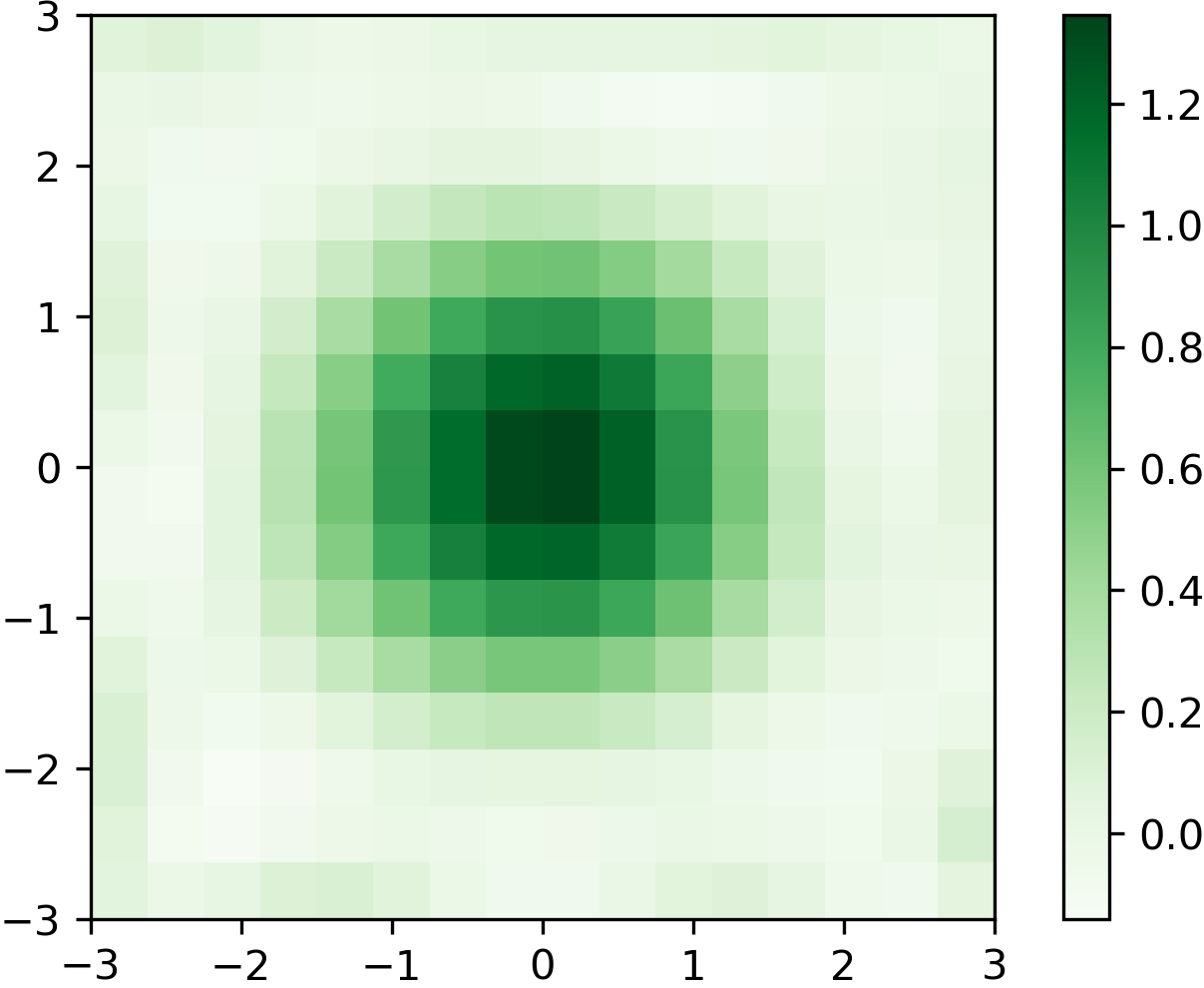}
  \subcaption{$B_1$, $k=1$, $\alpha=1\mathrm{e}-1$, $n=30$}
 \end{minipage}
 \begin{minipage}[b]{0.4\linewidth}
  \centering
  \includegraphics[keepaspectratio, scale=0.45]
  {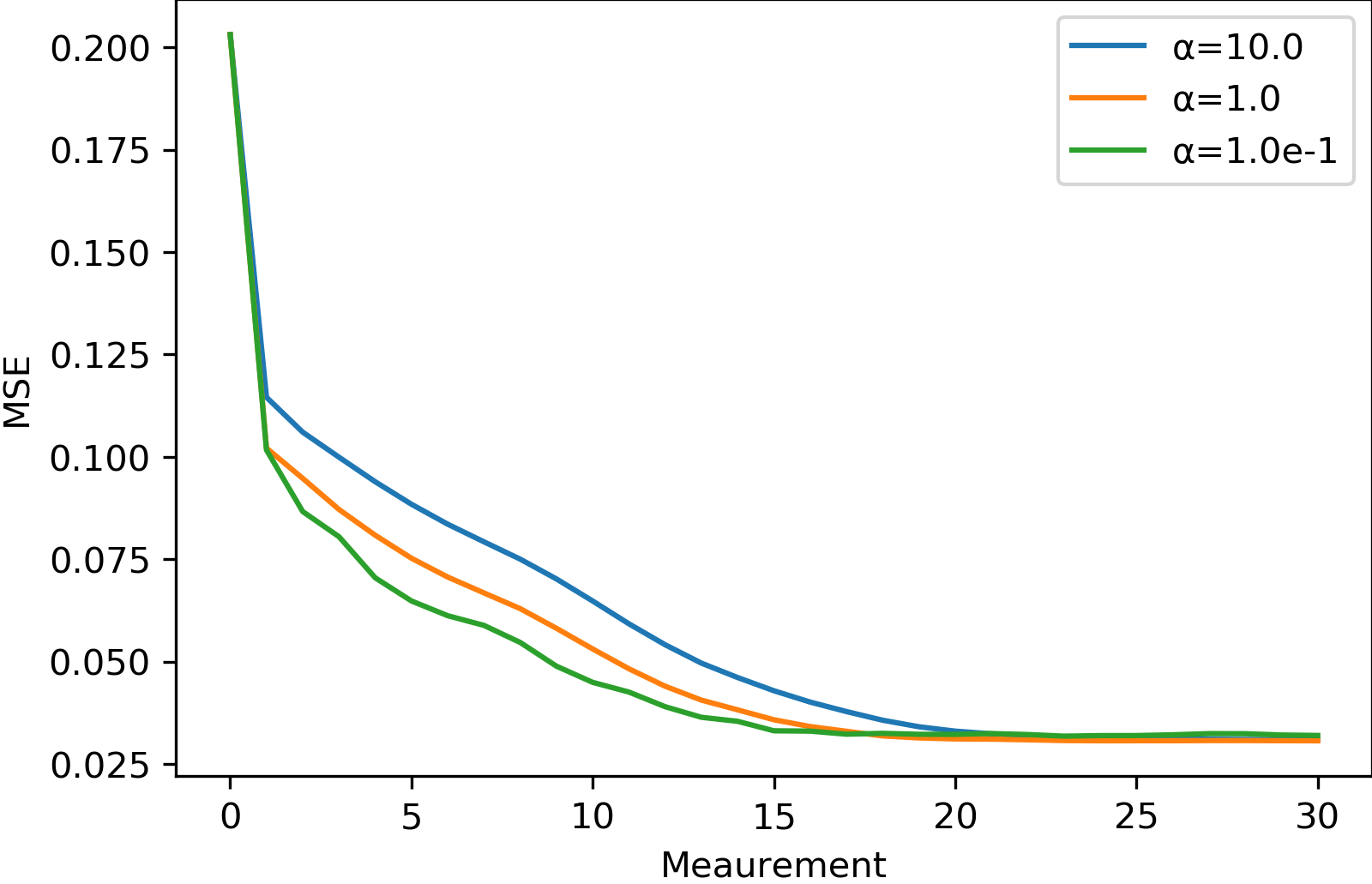}
  \subcaption{$B_1$, $k=1$, error graph}
 \end{minipage}
\end{tabular}
\begin{tabular}{c}
\hspace{-2.5cm}
 \begin{minipage}[b]{0.4\linewidth}
  \centering
  \includegraphics[keepaspectratio, scale=0.45]
  {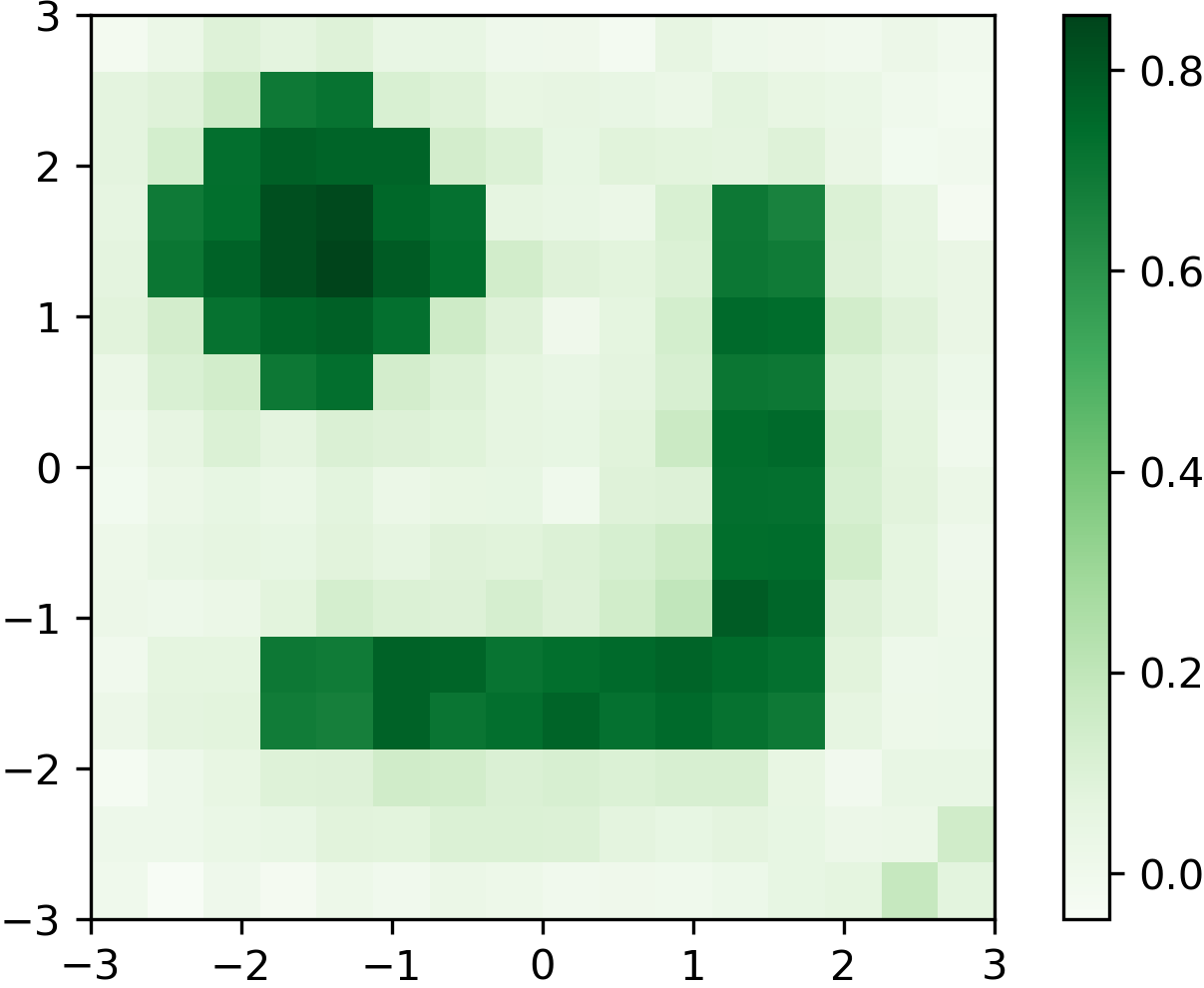}
  \subcaption{$B_2$, $k=5$, $\alpha=10$, $n=30$}
 \end{minipage}
 \begin{minipage}[b]{0.4\linewidth}
  \centering
  \includegraphics[keepaspectratio, scale=0.45]
  {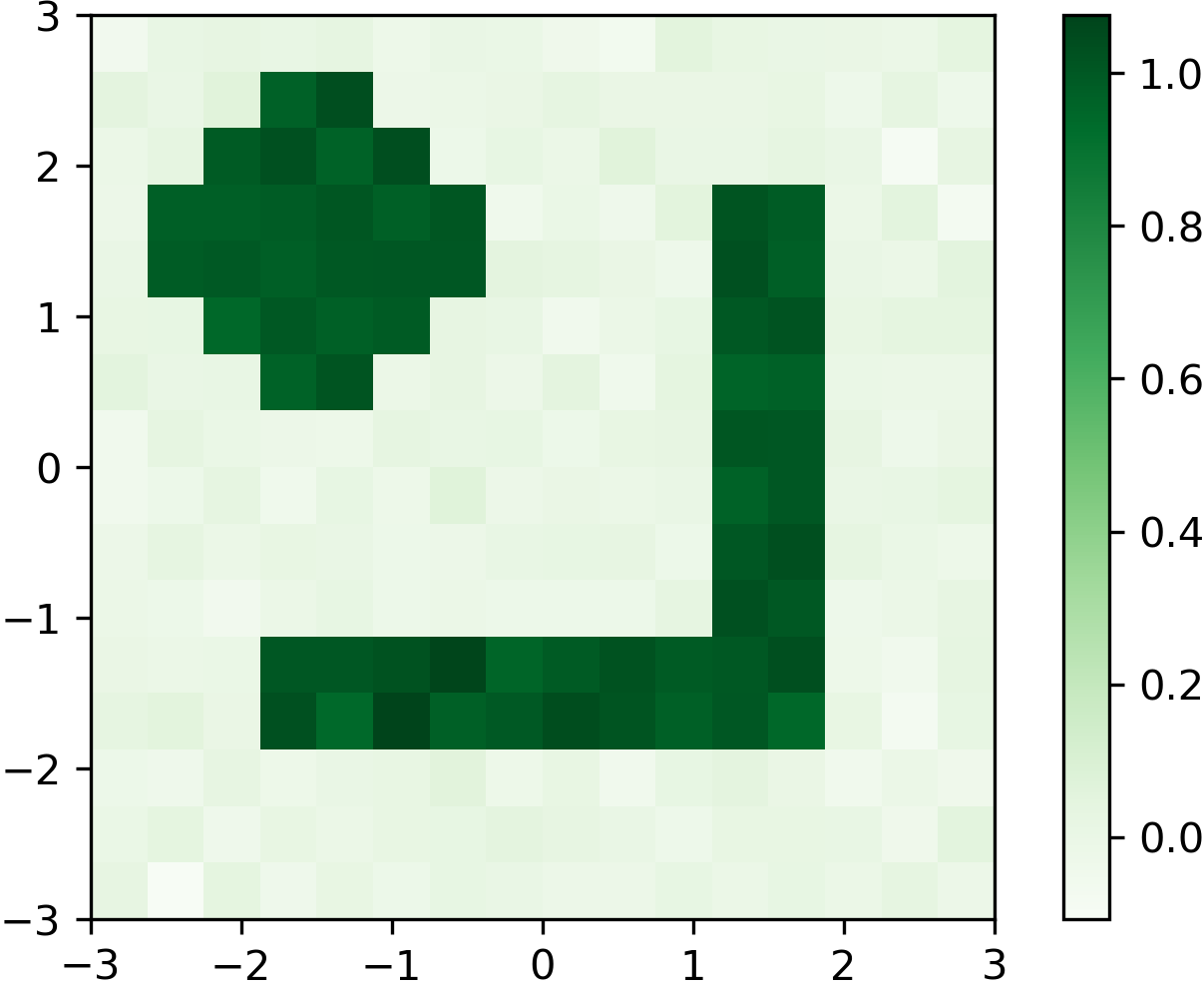}
  \subcaption{$B_2$, $k=5$, $\alpha=1\mathrm{e}-1$, $n=30$}
 \end{minipage}
 \begin{minipage}[b]{0.4\linewidth}
  \centering
  \includegraphics[keepaspectratio, scale=0.45]
  {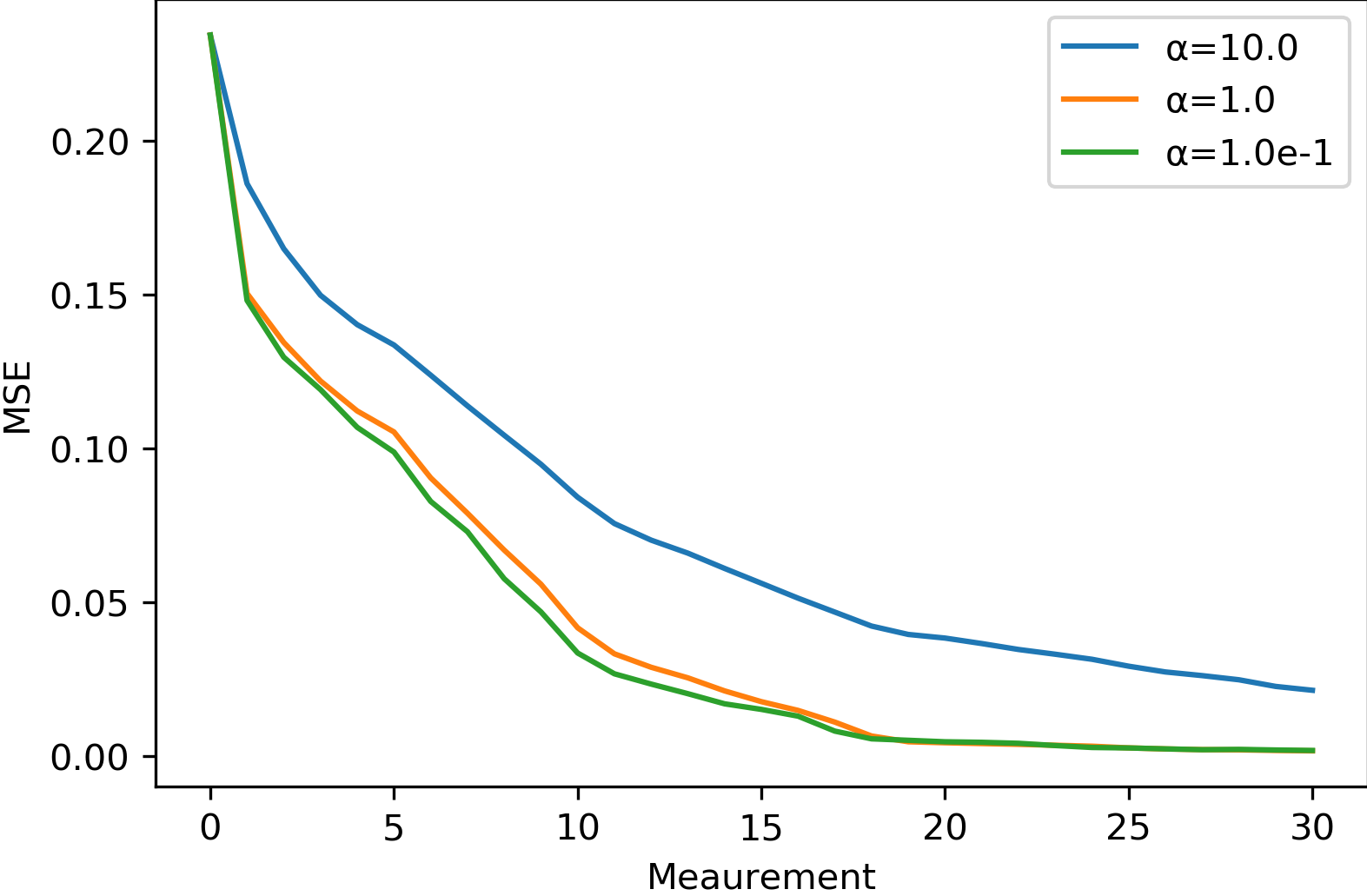}
  \subcaption{$B_2$, $k=5$, error graph}
 \end{minipage}
\end{tabular}
\begin{tabular}{c}
\hspace{-2.5cm}
 \begin{minipage}[b]{0.4\linewidth}
  \centering
  \includegraphics[keepaspectratio, scale=0.45]
  {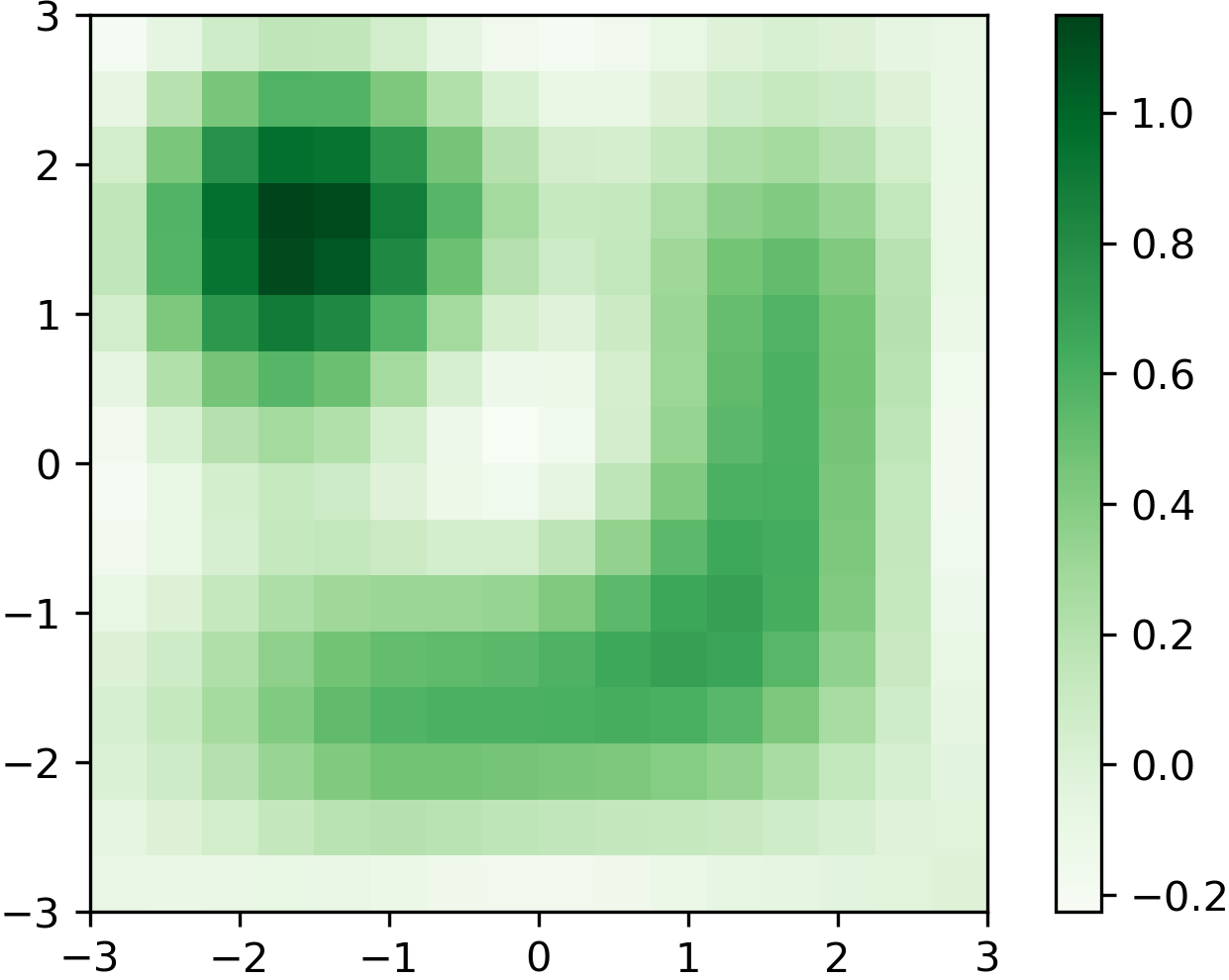}
  \subcaption{$B_2$, $k=1$, $\alpha=10$, $n=30$}
 \end{minipage}
 \begin{minipage}[b]{0.4\linewidth}
  \centering
  \includegraphics[keepaspectratio, scale=0.45]
  {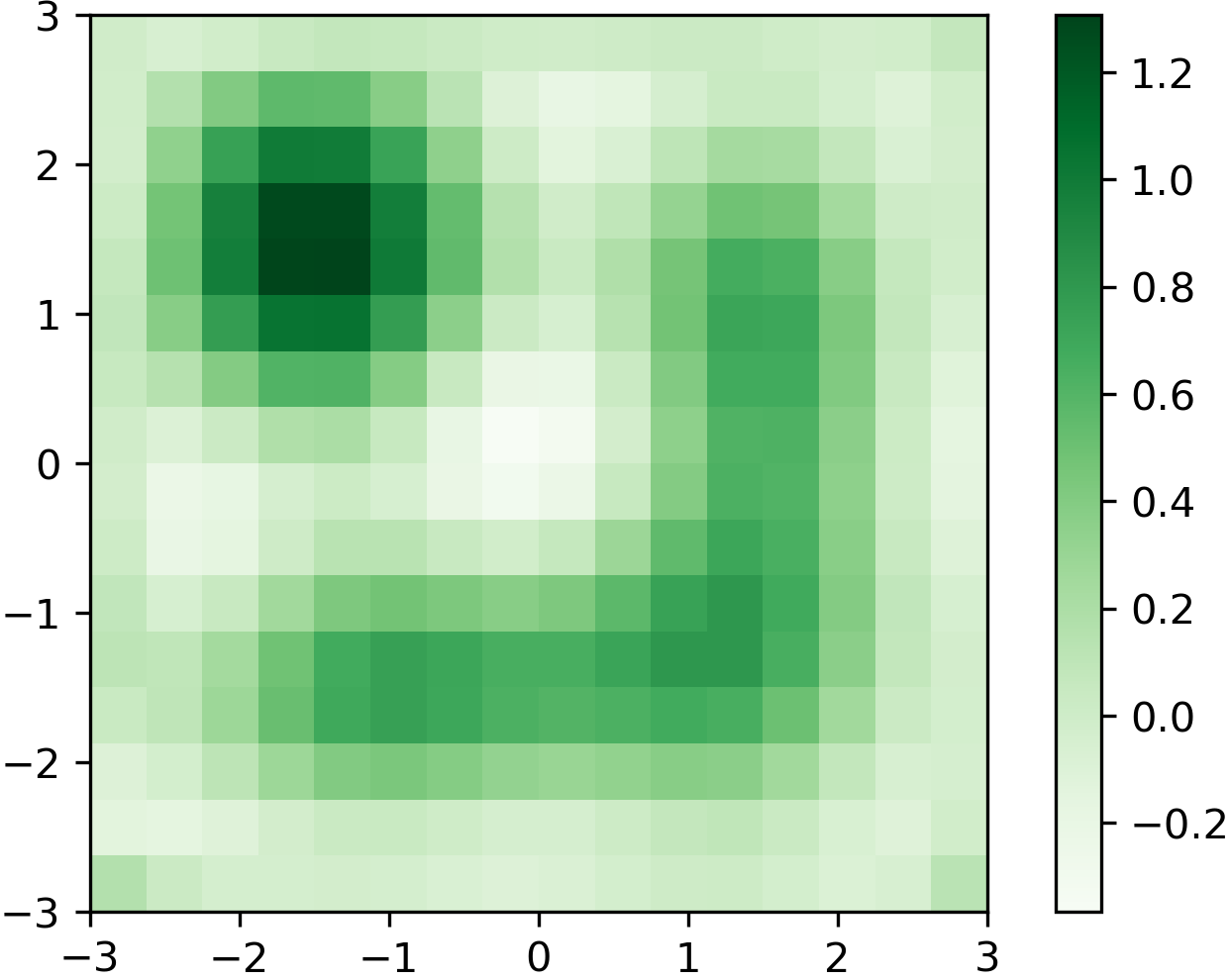}
  \subcaption{$B_2$, $k=1$, $\alpha=1\mathrm{e}-1$, $n=30$}
 \end{minipage}
 \begin{minipage}[b]{0.4\linewidth}
  \centering
  \includegraphics[keepaspectratio, scale=0.45]
  {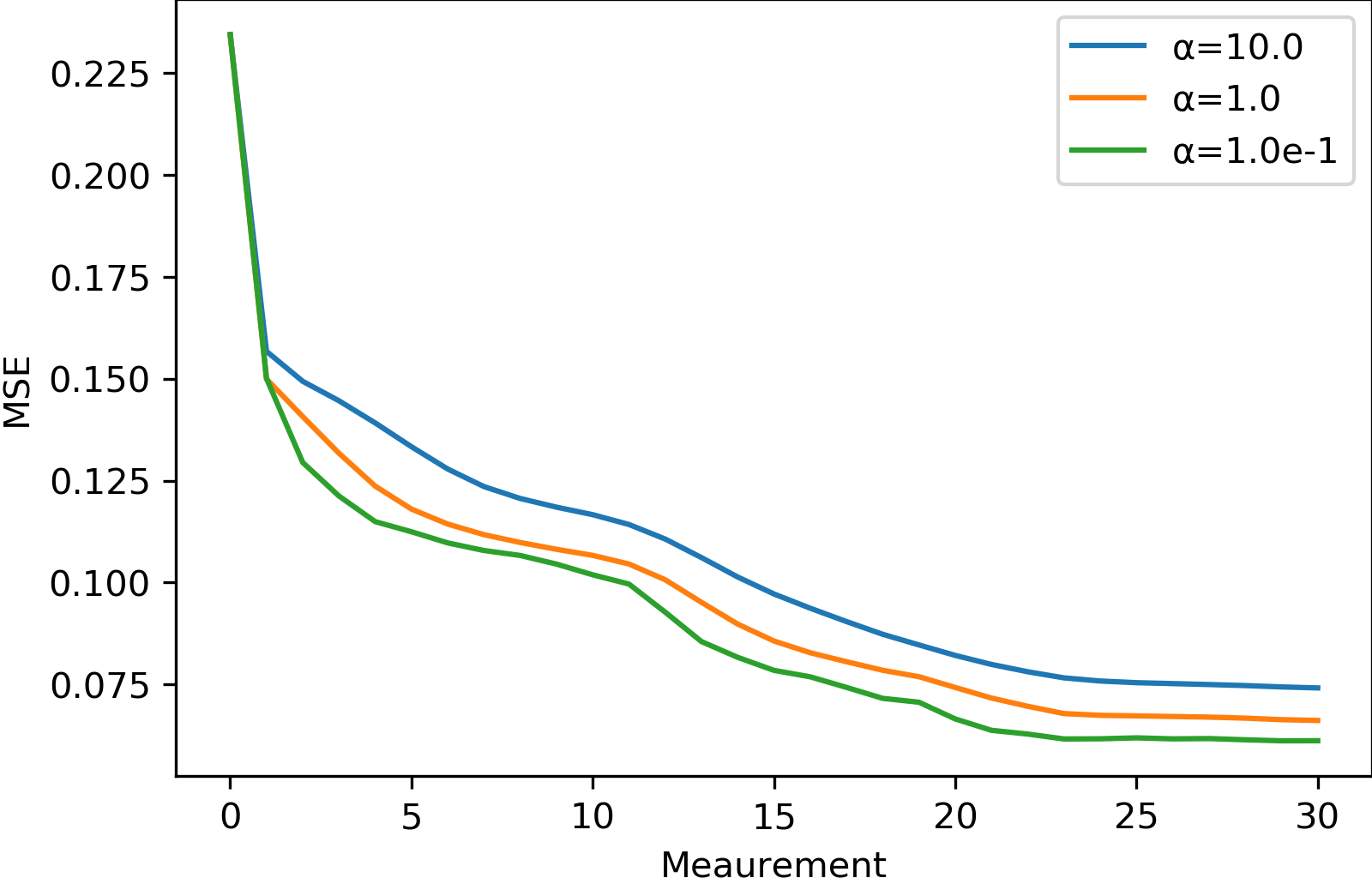}
  \subcaption{$B_2$, $k=1$, error graph}
 \end{minipage}
\end{tabular}
\caption{KF reconstruction for different $k$ and $\alpha$ (nosiy $\sigma= 0.1$)}
\label{KFreconstruction}
\end{figure}
\begin{figure}[h]
\begin{tabular}{c}
\hspace{-2.5cm}
 \begin{minipage}[b]{0.4\linewidth}
  \centering
  \includegraphics[keepaspectratio, scale=0.45]
  {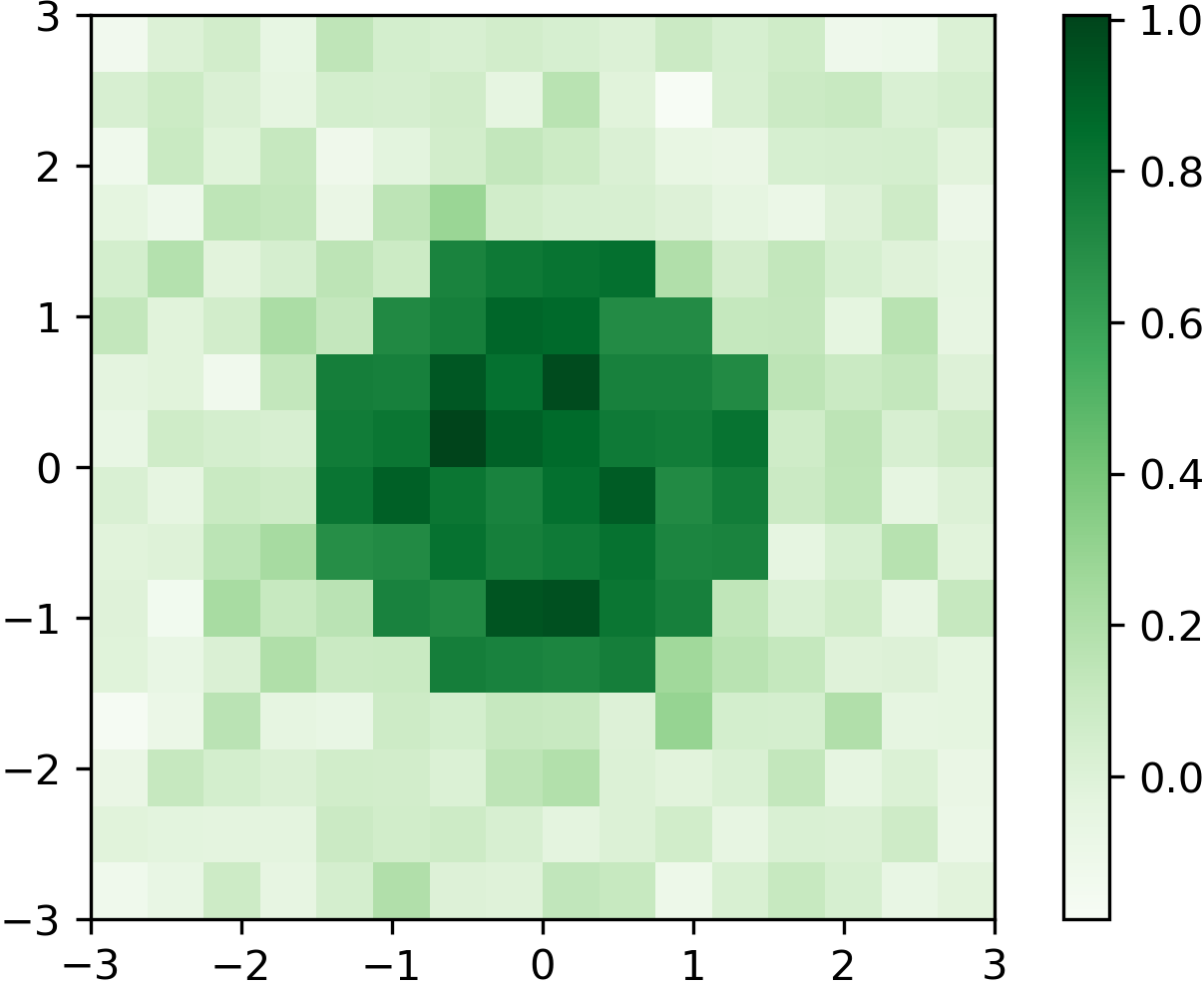}
  \subcaption{$B_1$, $k=5$, $\alpha=10$, $n=30$}
 \end{minipage}
 \begin{minipage}[b]{0.4\linewidth}
  \centering
  \includegraphics[keepaspectratio, scale=0.45]
  {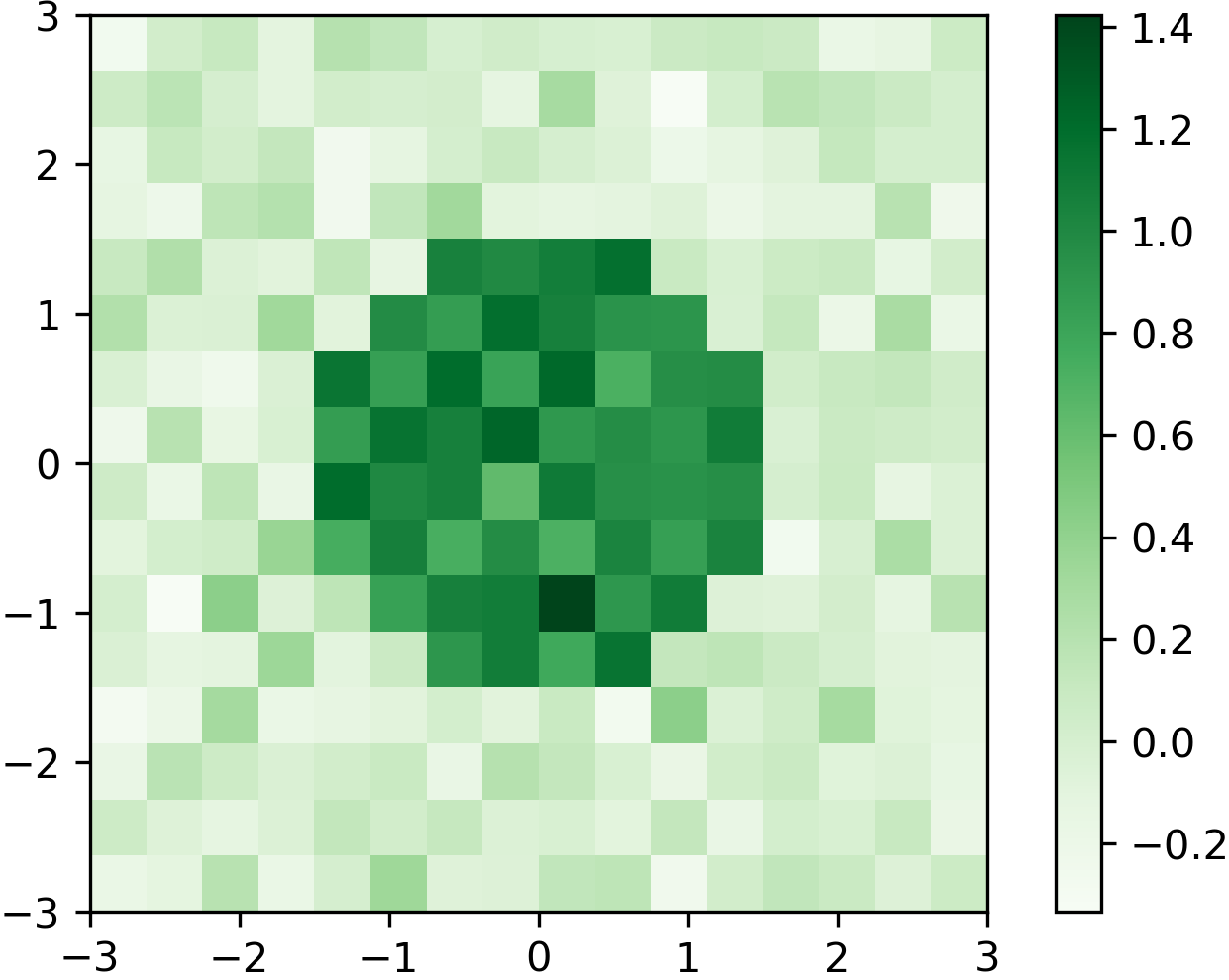}
  \subcaption{$B_1$, $k=5$, $\alpha=1\mathrm{e}-1$, $n=30$}
 \end{minipage}
 \begin{minipage}[b]{0.4\linewidth}
  \centering
  \includegraphics[keepaspectratio, scale=0.45]
  {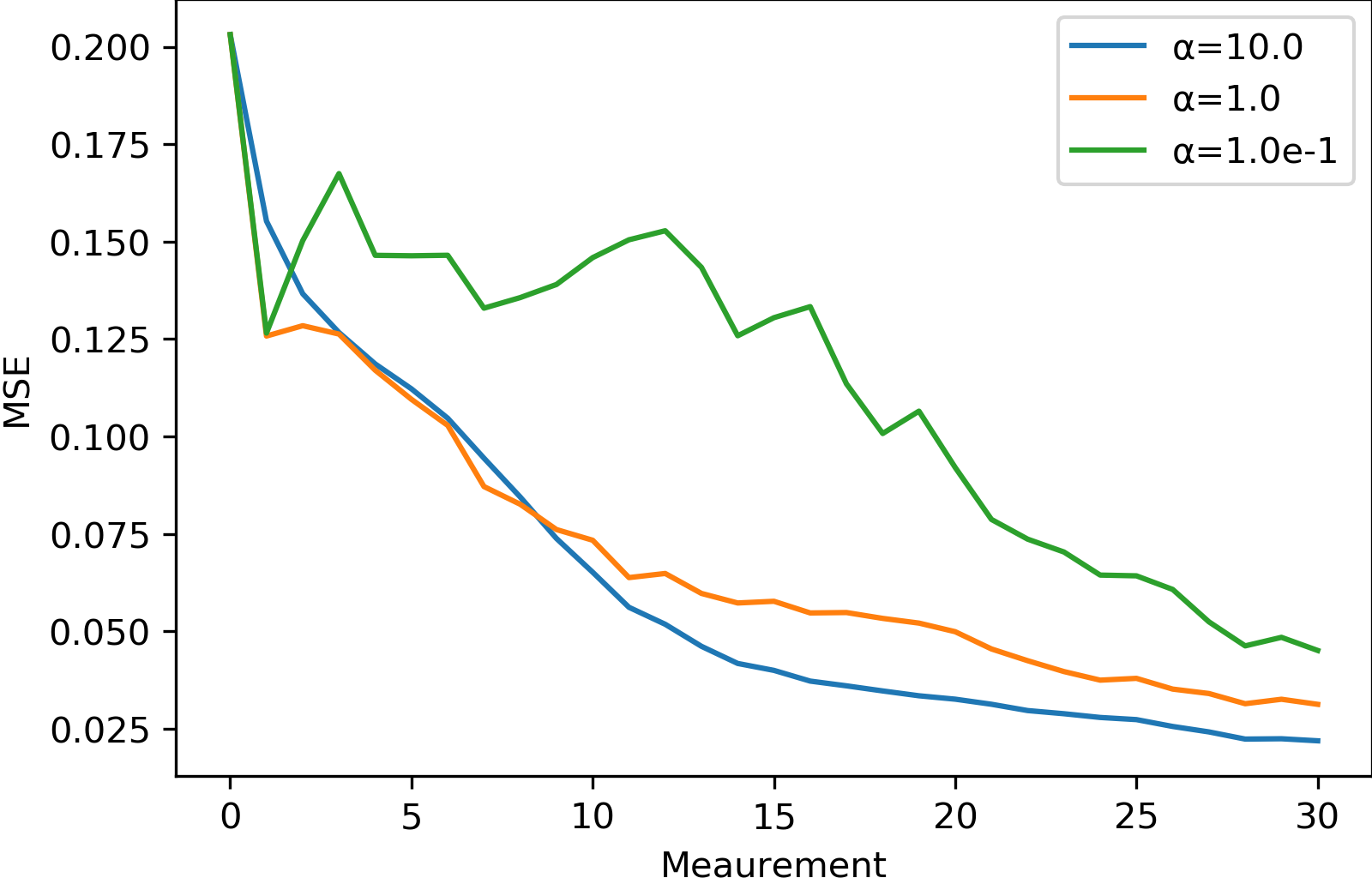}
  \subcaption{$B_1$, $k=5$, error graph}
 \end{minipage}
\end{tabular}
\begin{tabular}{c}
\hspace{-2.5cm}
 \begin{minipage}[b]{0.4\linewidth}
  \centering
  \includegraphics[keepaspectratio, scale=0.45]
  {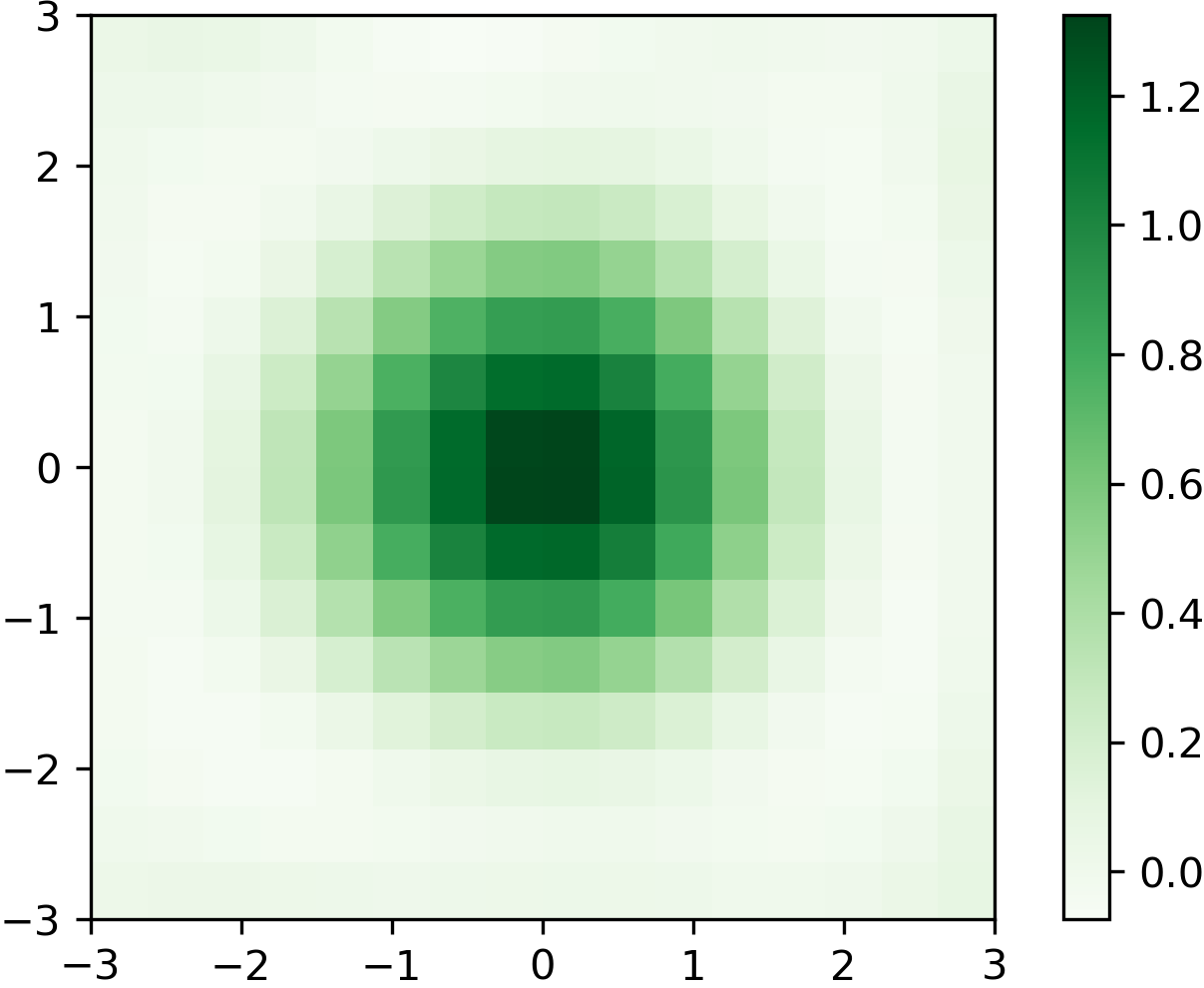}
  \subcaption{$B_1$, $k=1$, $\alpha=10$, $n=30$}
 \end{minipage}
 \begin{minipage}[b]{0.4\linewidth}
  \centering
  \includegraphics[keepaspectratio, scale=0.45]
  {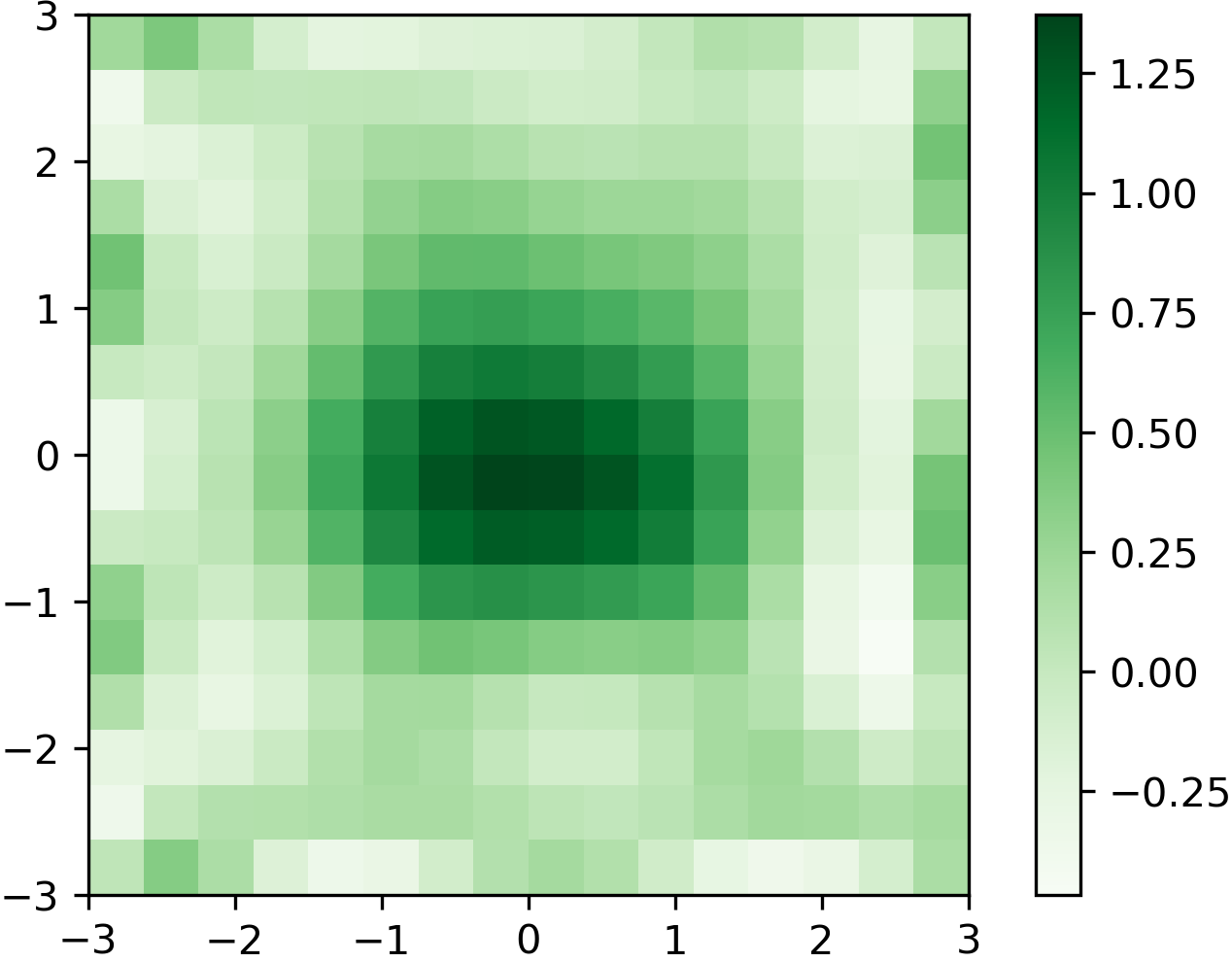}
  \subcaption{$B_1$, $k=1$, $\alpha=1\mathrm{e}-1$, $n=30$}
 \end{minipage}
 \begin{minipage}[b]{0.4\linewidth}
  \centering
  \includegraphics[keepaspectratio, scale=0.45]
  {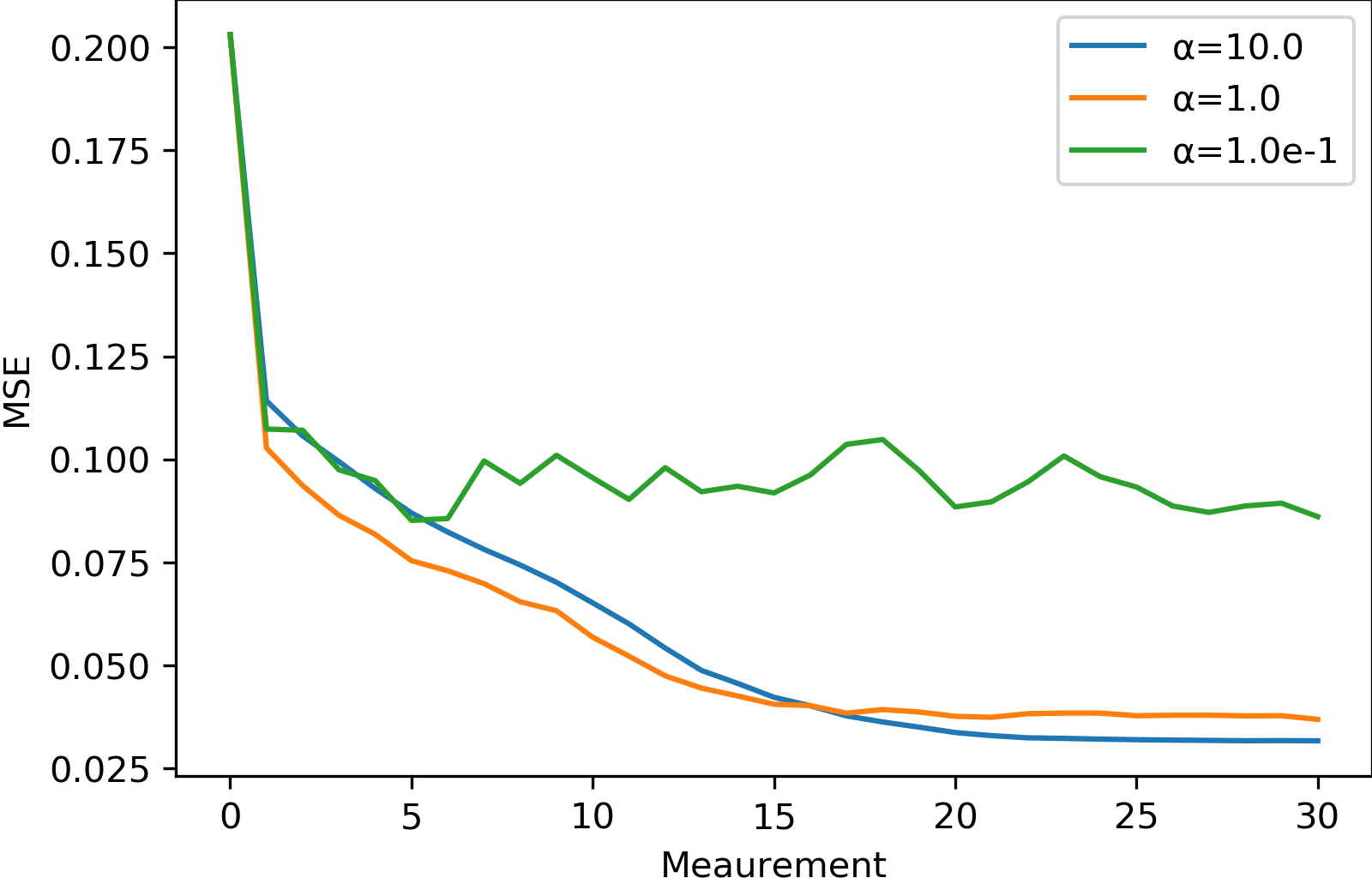}
  \subcaption{$B_1$, $k=1$, error graph}
 \end{minipage}
\end{tabular}
\begin{tabular}{c}
\hspace{-2.5cm}
 \begin{minipage}[b]{0.4\linewidth}
  \centering
  \includegraphics[keepaspectratio, scale=0.45]
  {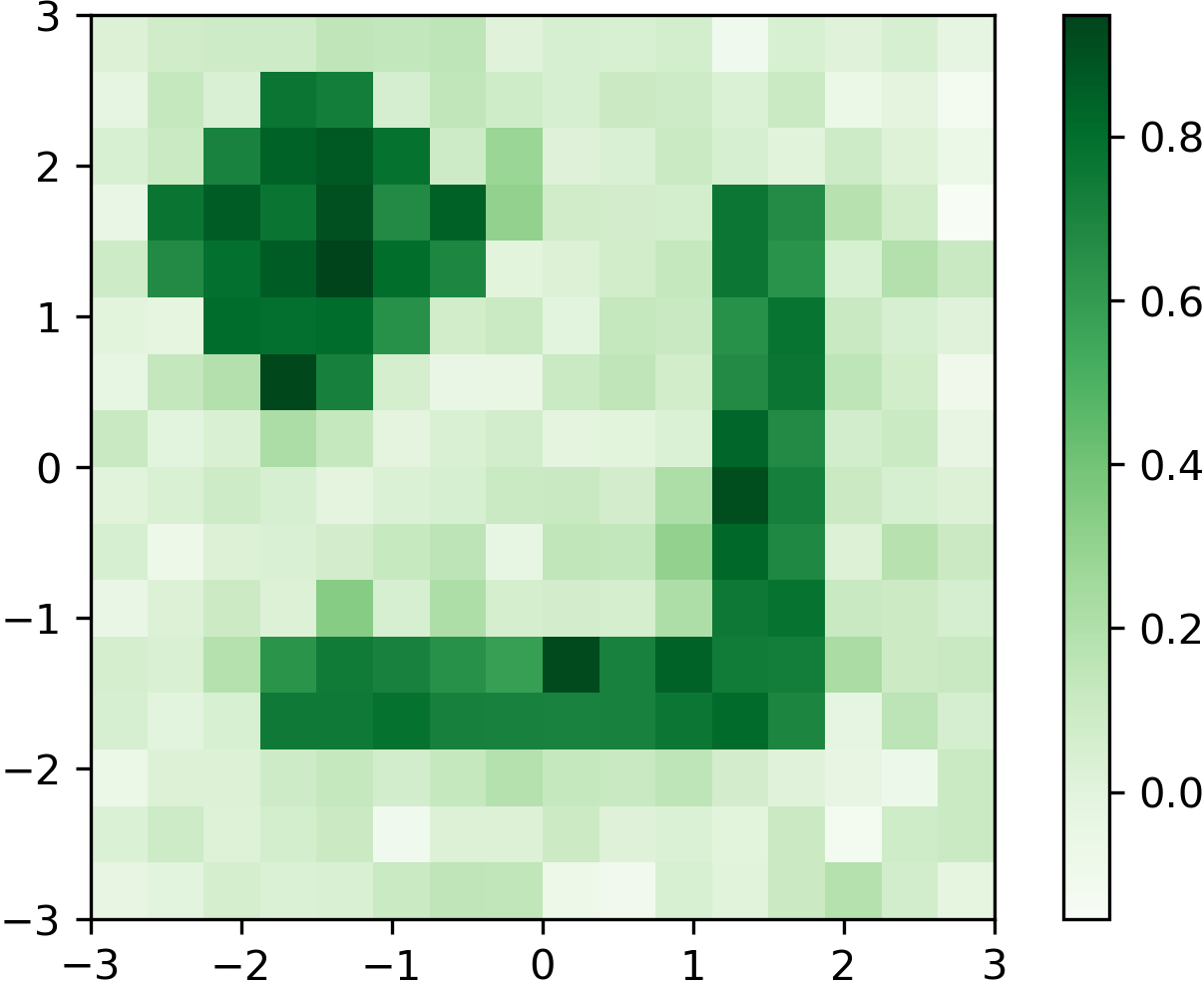}
  \subcaption{$B_2$, $k=5$, $\alpha=10$, $n=30$}
 \end{minipage}
 \begin{minipage}[b]{0.4\linewidth}
  \centering
  \includegraphics[keepaspectratio, scale=0.45]
  {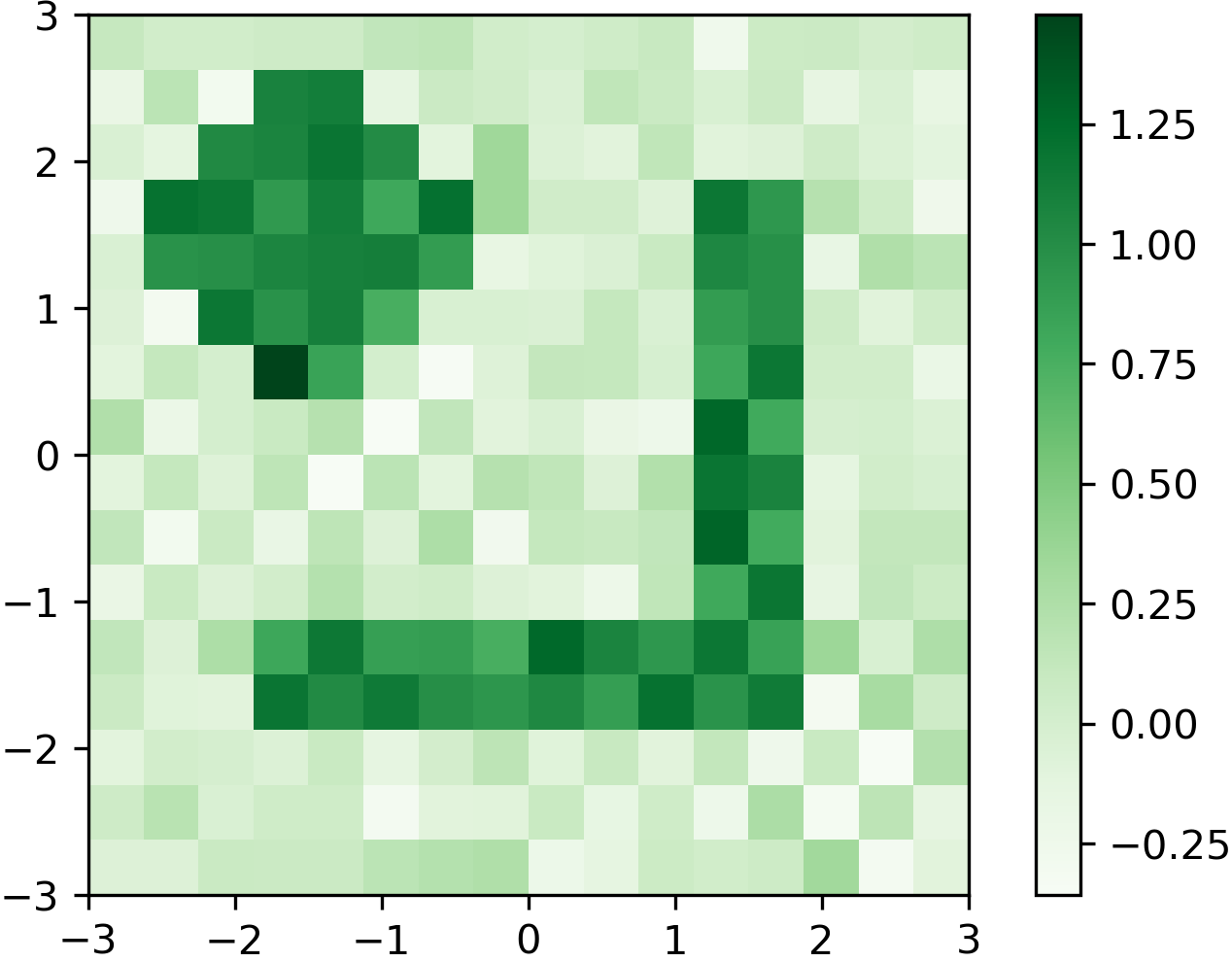}
  \subcaption{$B_2$, $k=5$, $\alpha=1\mathrm{e}-1$, $n=30$}
 \end{minipage}
 \begin{minipage}[b]{0.4\linewidth}
  \centering
  \includegraphics[keepaspectratio, scale=0.45]
  {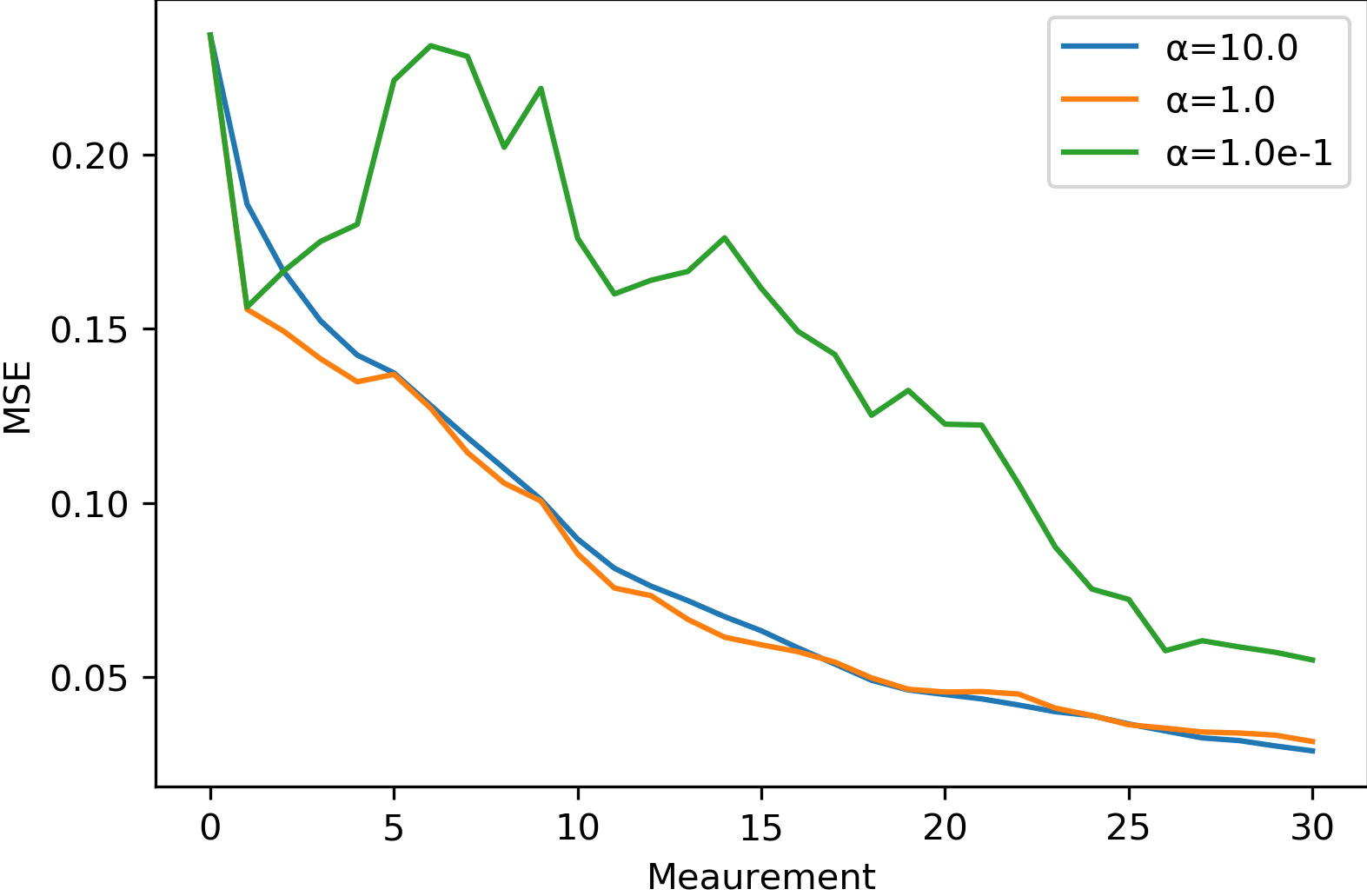}
  \subcaption{$B_2$, $k=5$, error graph}
 \end{minipage}
\end{tabular}
\begin{tabular}{c}
\hspace{-2.5cm}
 \begin{minipage}[b]{0.4\linewidth}
  \centering
  \includegraphics[keepaspectratio, scale=0.45]
  {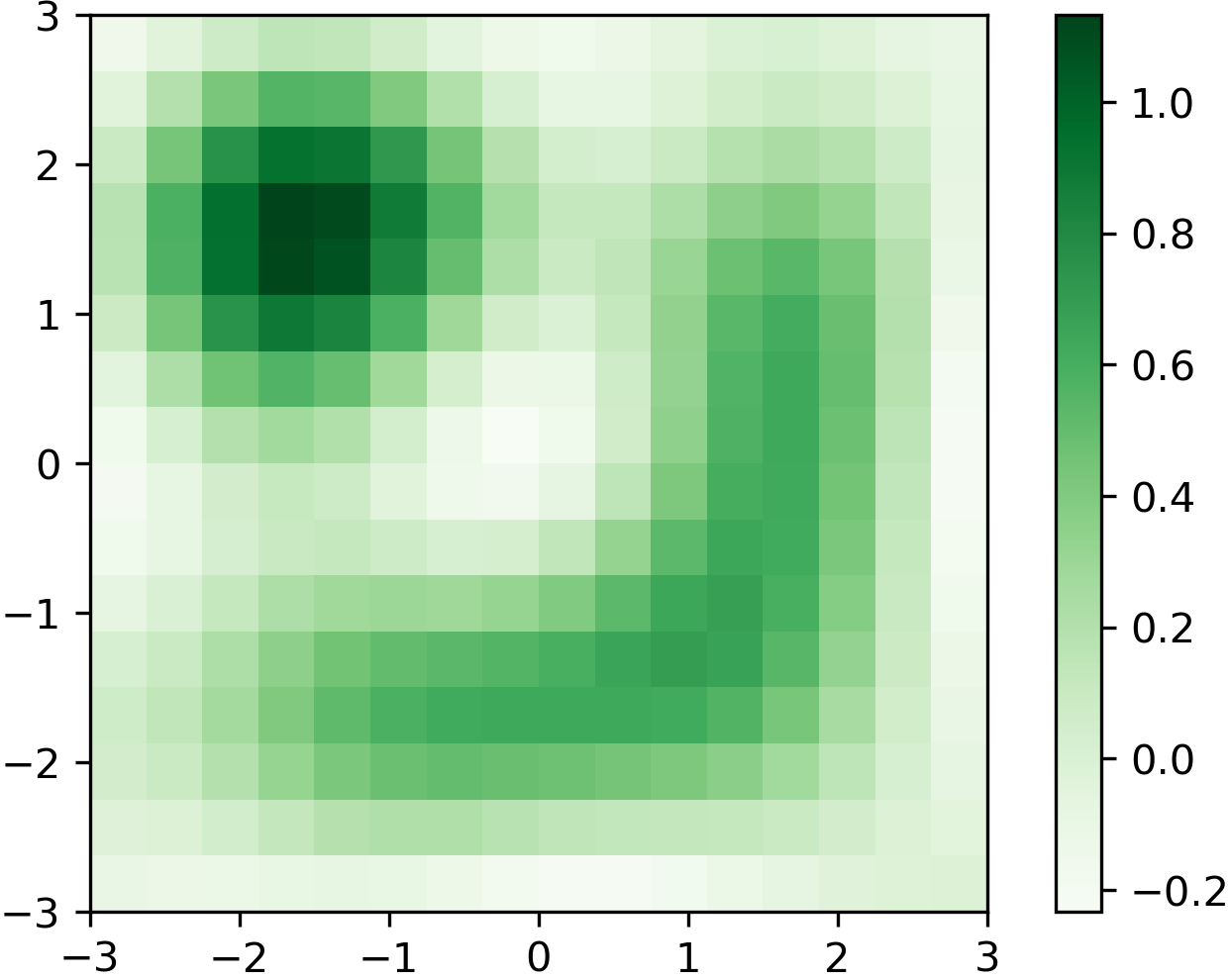}
  \subcaption{$B_2$, $k=1$, $\alpha=10$, $n=30$}
 \end{minipage}
 \begin{minipage}[b]{0.4\linewidth}
  \centering
  \includegraphics[keepaspectratio, scale=0.45]
  {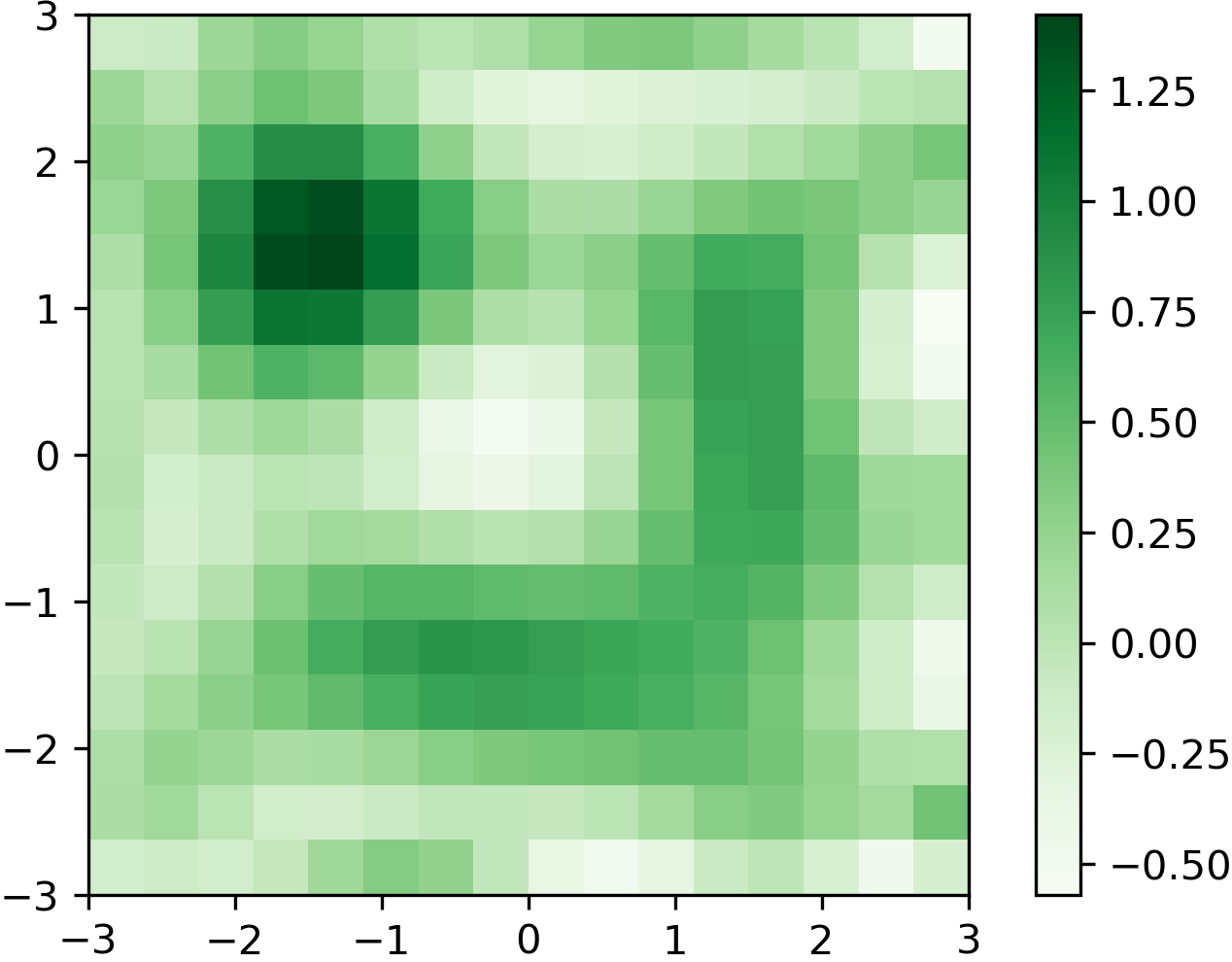}
  \subcaption{$B_2$, $k=1$, $\alpha=1\mathrm{e}-1$, $n=30$}
 \end{minipage}
 \begin{minipage}[b]{0.4\linewidth}
  \centering
  \includegraphics[keepaspectratio, scale=0.45]
  {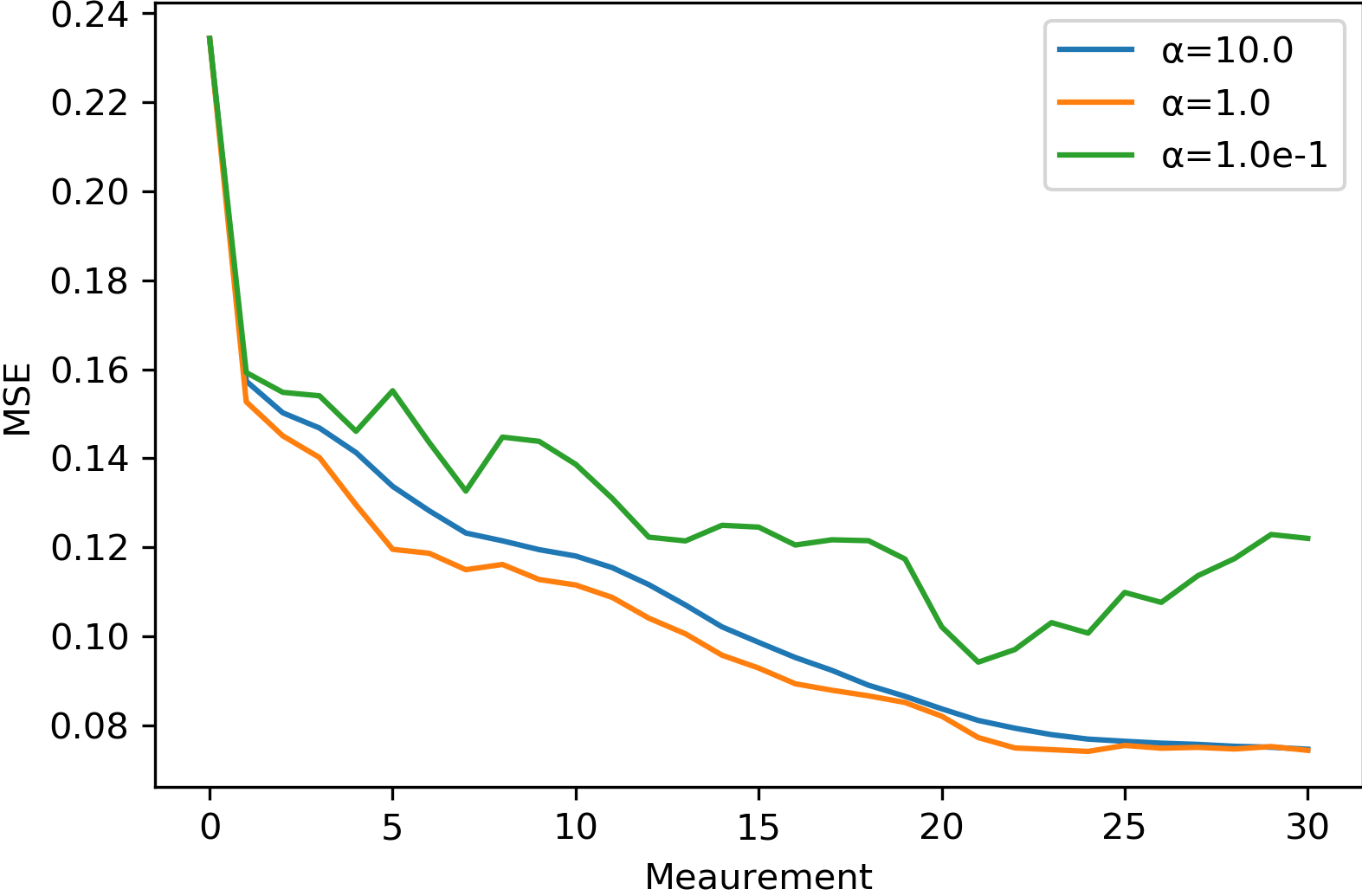}
  \subcaption{$B_2$, $k=1$, error graph}
 \end{minipage}
\end{tabular}
\caption{KF reconstruction for different $k$ and $\alpha$ (nosiy $\sigma= 0.5$)}
\label{KFreconstructionnosiy}
\end{figure}
\begin{figure}[h]
  \centering
  \includegraphics[keepaspectratio, scale=0.7]
  {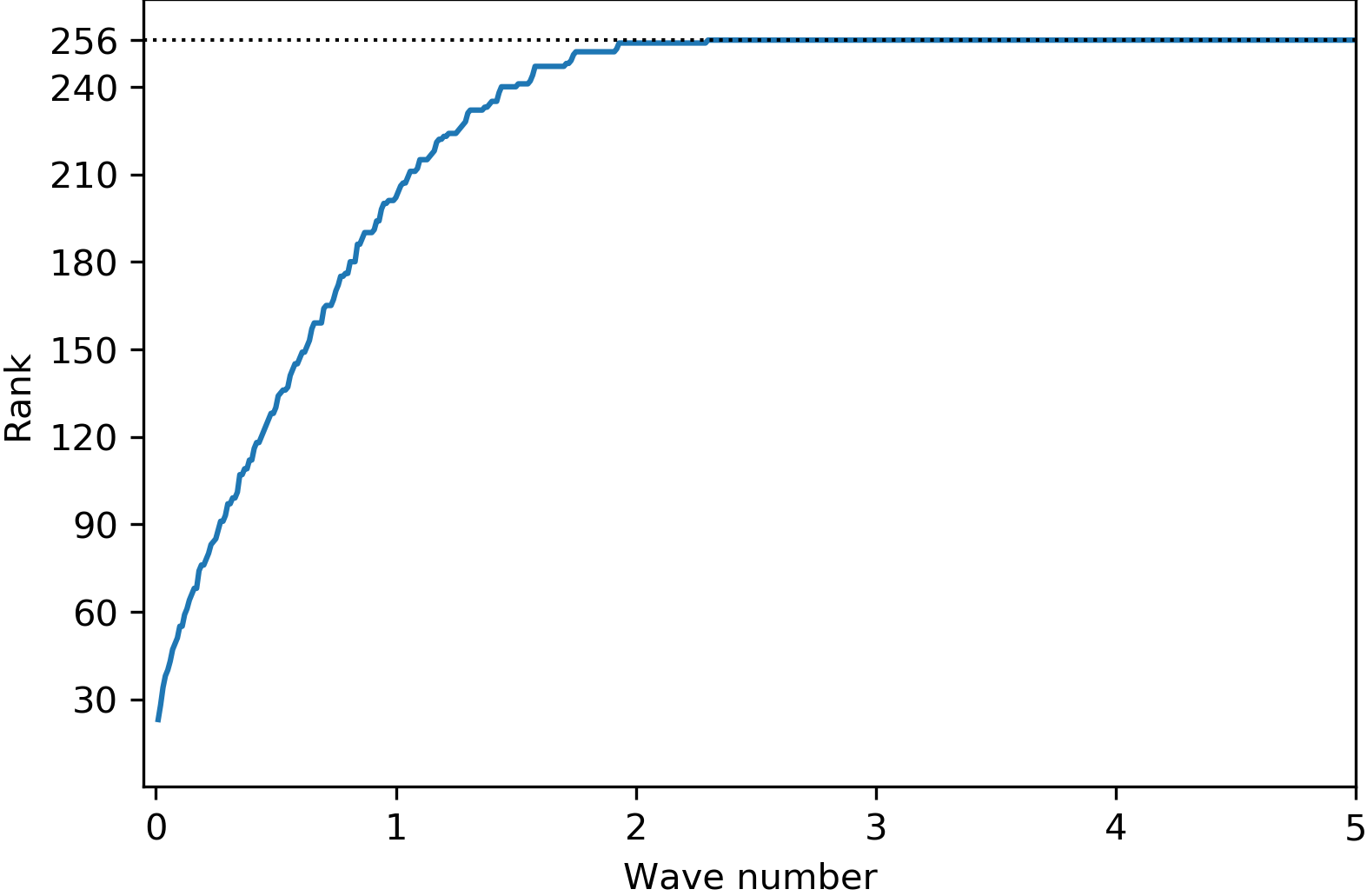}
 \caption{the graph of the rank of $\vec{\mathcal{F}}_{B} $}
\label{rank}
\end{figure}

\end{document}